\newtheorem{Th}{Theorem}
\newtheorem{Prop}[Th]{Proposition}
\newtheorem{Lm}[Th]{Lemma}
\newtheorem{Co}[Th]{Corollary}
\newtheorem{Conj}[Th]{Conjecture}
\def\be{\begin{eqnarray}} \def\ee{\end{eqnarray}} \def\bes{\begin{eqnarray*}}
\def\ees{\end{eqnarray*}} \def\bit{\begin{itemize}} \def\eit{\end{itemize}}
\def\ba{\begin{array}{ll}} \def\ea{\end{array}}
\def\mn{\mathbb{N}}
\def\mc{\mathbb{C}}  
\def\mr{\mathbb{R}}
\newtheorem{Def}[Th]{Definition}
 \def\C{\mathbb{C}} \def\sm{\setminus} \def\mn{\mathbb{N}}
\def\le{\leqslant} \def\ge{\geqslant} \def\ra{\rightarrow} 
\def\d{\text{dist}} \def\re{\text{Re}} \def\im{\text{Im}}
\def\scr{\scriptscriptstyle}
\author{Artem Dudko}
\title{Computability of the Julia set. Nonrecurrent critical orbits.}
\date{}
\begin{document}
\maketitle
\begin{abstract} We prove that the Julia set of a
rational function $f$ is computable in  polynomial time, assuming
that the postcritical set of $f$ does not contain any critical
points or parabolic periodic orbits.

\end{abstract}

\section{Introduction.}

A compact subset of the complex plane is called computable if it can
be visualized on a computer screen with an arbitrarily  high
precision. Computer-generated images of mathematical objects play an
important role in establishing new results. Among such images, Julia
sets of rational functions occupy one of the most prominent
position. Recently, it was shown that for a wide class of rational
functions their Julia sets can be computed efficiently (see
\cite{Bra04}, \cite{Bra06}, \cite{Ret05}) and yet some of those sets
are uncomputable, and so cannot be visualized (see \cite{BY06},
\cite{BBY09}). Also, there are examples of computable Julia sets
whose computational complexity is arbitrarily high (see
\cite{BBY06}).


One of the natural open questions of computational complexity of
Julia sets is how large is the class of rational functions (in a
sense of Lebesgue measure on the parameter space) whose Julia set
can be computed in a polynomial time. Informally speaking, such
Julia sets are {\it easy} to simulate numerically.\\ {\bf
Conjecture.} {\it The class of rational functions of degree
$d\geqslant 2$  whose Julia set can be computed in a polynomial time
has a full measure in the space of parameters.}\\ The main result of
the paper is the following. \\ {\bf Main Theorem.} {\it Let $f$ be a
rational function of degree $d\geqslant 2$. Assume that for each
critical point $c\in J_f$ the $\omega$-limit set $\omega(c)$ does
not contain either recurrent critical point or a parabolic periodic
point of $f$. Then the Julia set $J_f$ is computable in a polynomial
time.}\\ As the reader will see below, this result can be viewed as
a natural step towards a proof of the conjecture.

The paper is organized as follows. In Section $1$ we give all
necessary preliminaries in Computability and Complex Dynamics, state
the results of the paper and discuss possible generalizations. To
illustrate the results of the paper on a simple case in section $2$
we prove that for every subhyperbolic rational function the Julia
set is computable in a polynomial time. In Section $3$ we prove the
main result of the paper under a simplifying assumption that the
rational map $f$ does not have any parabolic periodic points. In
section $4$ we complete a proof of the main result.

{\bf Acknowledgements.} It is my great pleasure to thank my
supervisor Michael Yampolsky for posing the question of the paper
and numerous fruitful discussions.

\subsection{Preliminaries on computability}
In this section we give a brief introduction to computability and
complexity of functions and sets.
 The notion of computability relies on the concept of a Turing Machine (TM).
A precise definition of a Turing Machine is quite technical and we
do not give it here. For the definition and properties of a Turing
Machine we refer the reader to \cite{Pap} and \cite{Sip}. The
computational power of a Turing Machine is equivalent to that of a
 RAM computer with infinite memory. One can generally
 think about a Turing Machine as a formalized definition of an algorithm or a computer program.

There exist several different definitions of computability of sets.
For discussion of different approaches to computability we refer the
reader to \cite{BY08}. In this article we use the notion of
computability related to complexity of drawing pictures on a
computer screen. Roughly speaking, a subset $S$ is called computable
in time $t(n)$ if there is a computer program which takes time
$t(n)$ to decide whether to draw a given $2^{-n}\times 2^{-n}$
square pixel in a picture of $S$ on a computer screen, which is
accurate up to one pixel size. Before giving a rigorous definition
of a computable Julia set we need to introduce some notations.

First we give the classical
definitions of a computable function and a computable number.
\begin{Def}\label{comp fun def} Let $S,N$ be countable subsets of $\mathbb{N}$.
A function $f:S\rightarrow N$ is called computable if there exists a
TM which takes $x$ as an input and outputs $f(x)$.
\end{Def}
Note that Definition \ref{comp fun def} can be naturally extended to
functions on arbitrary countable sets, using a convenient
identification with $\mathbb{N}$.
\begin{Def} A real number $\alpha$ is called computable if there is a
 computable function $\phi:\mathbb{N}\rightarrow \mathbb{Q}$,
 such that for all $n$ $$\left|\alpha-\phi(n)\right|<2^{-n}.$$
 The set of computable reals is denoted by $\mathbb{R}_\mathcal{C}$.
\end{Def} In other words, $\alpha$ is called computable if there is an algorithm
 which can approximate $\alpha$ with any given precision.
The set $\mathbb{R}_\mathcal{C}$ is countable, since there are only
countably many algorithms. The set of computable complex numbers is
defined by
$\mathbb{C}_\mathcal{C}=\mathbb{R}_\mathcal{C}+i\mathbb{R}_\mathcal{C}$.
Note that both $\mathbb{R}_\mathcal{C}$ and $\mathbb{C}_\mathcal{C}$
considered with usual multiplication and addition form fields.
Moreover, it is easy to see that $\mathbb{C}_\mathcal{C}$ is
algebraically closed.


 Let $d(\cdot,\cdot)$ stand for Euclidian distance between points or sets in $\mathbb{R}^2$.
 Recall the definition of the Hausdorff distance between two sets:
$$d_H(S,T)=\inf\{r>0:S\subset U_r(T),\;T\subset U_r(S)\},$$
where $U_r(T)$ stands for the $r$-neighborhood of $T$:
$$U_r(T)=\{z\in \mathbb{R}^2:d(z,T)\leqslant r\}.$$ We call
a set $T$ a $2^{-n}$ approximation of a bounded set $S$ if $S\subset
T$
 and $d_H(S,T)\leqslant 2^{-n}$. When we try to draw a $2^{-n}$ approximation $T$ of a set $S$
 using a computer program, it is convenient to
 let $T$ be a finite
 collection of disks of radius $2^{-n-2}$ centered at points of the form $(i/2^{n+2},j/2^{n+2})$
 for $i,j\in \mathbb{Z}$.  Such $T$ can be described using a function
\begin{eqnarray}\label{comp fun}h_S(n,z)=\left\{\begin{array}{ll}1,&\text{if}\;\;d(z,S)\leqslant 2^{-n-2},
\\0,&\text{if}\;\;d(z,S)\geqslant 2\cdot 2^{-n-2},\\
0\;\text{or}\;1&\text{otherwise},
\end{array}\right.\end{eqnarray} where $n\in \mathbb{N}$ and $z=(i/2^{n+2},j/2^{n+2}),\;i,j\in \mathbb{Z}.$\\
\begin{figure}\centering\epsfig{file=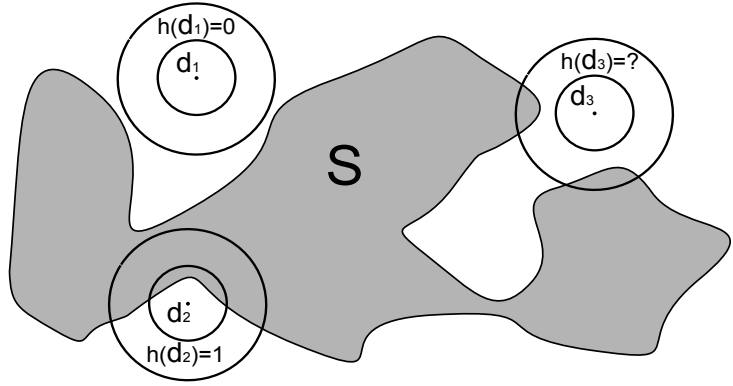,width=.70\linewidth}\caption{Values
of the function $h_S$.} \end{figure} \\ Using this
function, we define computability and computational
complexity of a set in $\mathbb{R}^2$ in the following way.
\begin{Def}\label{DefComputeSet} A bounded set $S\subset
\mathbb{R}^2$ is called computable in time $t(n)$ if there is a TM,
which computes values of a function $h(n,\bullet)$ of the form
(\ref{comp fun}) in time $t(n)$. We say that $S$ is poly-time
computable if there exists a polynomial $p(n)$, such that $S$ is
computable in time $p(n)$.
\end{Def}  Similarly, one can define computability and
computational complexity of subsets of $\mr^k$. Moreover, definition
\ref{DefComputeSet} naturally extends to subsets of
$\hat{\mathbb{C}}$ (see \cite{BY08} Section 2.1). The Riemann sphere
$\hat{\mathbb{C}}$ is homeomorphic to the unit sphere
$$S^2=\{x:|x|=1\}\subset \mathbb{R}^3.$$
 Consider the stereographic projection $$P:S^2\setminus
 \{\text{North Pole}\}\rightarrow \mathbb{C}.$$ The inverse
 of this projection is given by $$P^{-1}:z\rightarrow
 \left(\frac{2\re(z)}{|z|^2+1},\frac{2\im(z)}{|z|^2+1},
 \frac{|z|^2-1}{|z|^2+1}\right);\;P^{-1}(\infty)=(0,0,1).$$
Observe that $P^{-1}$ induces the spherical metric on
$\hat{\mc}$, given by the formula
$$ds=\frac{dz}{1+|z|^2}.$$ \begin{Def} A subset
$K\subset\hat{\mc}$ is called computable in time $t(n)$ if
$P^{-1}(K)\subset\mr^3$ is computable in time $t(n)$.
\end{Def} \begin{Prop} Let $K\subset \mc$ be a bounded
subset. Then $K$ is computable as a subset of $\hat{\mc}$
if and only if it is computable as a subset of $\mr^2$.
Similarly, $K$ is poly-time computable as a subset of
$\hat{\mc}$ if and only if it is poly-time computable as a
subset of $\mr^2$.\end{Prop}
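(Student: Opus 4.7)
The plan is to exploit the fact that, since $K$ is bounded, the stereographic preimage $P^{-1}(K)$ lives in a compact subset of $S^2$ bounded away from the north pole, and on any such region the maps $P$ and $P^{-1}$ are smooth, hence bi-Lipschitz. First I would fix $R$ with $K\subset\{|z|\le R\}$ and choose $c_0<1$ so that $P^{-1}(\overline{B(0,R)})\subset S^2\cap\{z_3\le c_0\}$. On the compact sets $\overline{B(0,R+1)}\subset\mathbb{C}$ and $S^2\cap\{z_3\le(1+c_0)/2\}$ there then exist constants $0<L_1\le L_2<\infty$ with
$$L_1\,|u-v|\;\le\;|P^{-1}(u)-P^{-1}(v)|\;\le\;L_2\,|u-v|,$$
and symmetrically for $P$. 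These constants depend only on $R$ and govern the translation of one precision scale into another.

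Assuming we are given a TM computing $h_K(n,\cdot)$ in time $t(n)$, I would build a TM for $h_{P^{-1}(K)}(n,\zeta)$ as follows. Given a dyadic $\zeta=(i,j,k)/2^{n+2}\in\mathbb{R}^3$, first test (using the rational quantity $|\zeta|^2$) whether $d(\zeta,S^2)>2\cdot 2^{-n-2}$; if so, output $0$, since $P^{-1}(K)\subset S^2$. Otherwise compute a sufficiently accurate rational approximation of the radial projection $\zeta/|\zeta|\in S^2$, then of its image $w=P(\zeta/|\zeta|)\in\mathbb{C}$, and round $w$ to the nearest dyadic point $w'$ at scale $2^{-(n+c+2)}$, where $c$ is a constant depending only on $L_1,L_2$. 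Finally output $h_K(n+c,w')$. With $c$ chosen large enough to absorb the bi-Lipschitz distortion together with the radial-projection and rounding errors, one verifies that $d(\zeta,P^{-1}(K))\le 2^{-n-2}$ forces $d(w',K)\le 2^{-(n+c)-2}$, and $d(\zeta,P^{-1}(K))\ge 2\cdot 2^{-n-2}$ forces $d(w',K)\ge 2\cdot 2^{-(n+c)-2}$, so the output is a legitimate value of $h_{P^{-1}(K)}$. The reverse direction is symmetric: given a TM for $h_{P^{-1}(K)}$, on input dyadic $z\in\mathbb{R}^2$ I would compute $P^{-1}(z)$ approximately, round to a dyadic $\zeta'\in\mathbb{R}^3$ at scale $2^{-(n+c+2)}$, and return $h_{P^{-1}(K)}(n+c,\zeta')$.

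The cost of each reduction is $t(n+c)+\mathrm{poly}(n)$ for a constant $c$, so a polynomial $t$ stays polynomial after the reduction and the poly-time equivalence follows immediately. I do not expect a serious obstacle here: the only care needed is to track the ``promise gap'' in the definition of $h_S$ and to pick $c$ large enough that the bi-Lipschitz distortion plus a bounded number of rounding errors cannot push a query across the threshold between the ``certainly close'' and ``certainly far'' cases. The boundedness of $K$ is essential, for otherwise $P^{-1}(K)$ could accumulate at the north pole and the Lipschitz constants $L_1,L_2$ would fail to exist.
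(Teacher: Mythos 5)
The paper states this proposition without proof---it appears immediately after the definition of computability on $\hat{\mathbb{C}}$ and is treated as routine---so there is nothing in the text to compare against; I judge your argument on its own merits. Your high-level plan is the right one: boundedness of $K$ keeps $P^{-1}(K)$ away from the north pole, where $P,P^{-1}$ are smooth and hence bi-Lipschitz, and the preliminary filters and the symmetry remark are fine. The gap is in the reduction itself. A single query $h_K(n+c,w')$ at a fixed shifted scale cannot in general certify $h_{P^{-1}(K)}(n,\zeta)$. Unwinding your two claimed implications: to force output $1$ whenever $d(\zeta,P^{-1}(K))\le 2^{-n-2}$ you need $d(w',K)\le 2^{-(n+c)-2}$, and in the worst case $d(w',K)$ is of order $(1/L_1)2^{-n-2}$, which forces $c\lesssim\log_2 L_1+O(1)$; to force output $0$ whenever $d(\zeta,P^{-1}(K))\ge 2\cdot 2^{-n-2}$ you need $d(w',K)\ge 2\cdot 2^{-(n+c)-2}$, and in the worst case $d(w',K)$ is only of order $(1/L_2)2^{-n-2}$, which forces $c\gtrsim\log_2 L_2+O(1)$. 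Since $L_1\le L_2$ (and indeed $L_2/L_1$ is roughly $1+R^2$ when $K\subset B(0,R)$, because the conformal factor of $P^{-1}$ is $2/(1+|z|^2)$), these constraints are incompatible once $L_2/L_1$ exceeds a small absolute constant: the built-in factor-of-two promise gap in $h_S$ is too tight to absorb an arbitrary bi-Lipschitz distortion with a single query, and ``taking $c$ large enough'' only helps one of the two inequalities at the expense of the other. Letting $c$ depend on the local derivative $|DP^{-1}(w)|$ improves matters but the radial displacement $\bigl||\zeta|-1\bigr|$ still eats a factor of about $2/\sqrt{3}$, so the argument still does not close.

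The standard repair is to extract more from $h_K$ than a single bit: query $h_K(m,\cdot)$ at the $O(1)$ grid points of scale $m+2$ lying within $O(2^{-m})$ of $w$, with $m=n+c$ for a large fixed constant $c$. The nearest such grid point answering $1$ pins $d(w,K)$ down to additive error $O(2^{-m})=O(2^{-c})\cdot 2^{-n}$. Combining this with the exactly computable radial distance $\bigl||\zeta|-1\bigr|$ and the efficiently computable local conformal factor $|DP^{-1}(w)|$, the approximate Pythagorean identity $d(\zeta,P^{-1}(K))^2\approx(|\zeta|-1)^2+|DP^{-1}(w)|^2\,d(w,K)^2$ gives $d(\zeta,P^{-1}(K))$ with error $\ll 2^{-n}$, which is enough to decide the thresholds $2^{-n-2}$ and $2\cdot 2^{-n-2}$ directly. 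This is still $O(1)$ queries per pixel at scale $n+O(1)$ and so preserves polynomial time; it is exactly the ``partition each pixel into subpixels'' device the paper itself invokes at the end of Section~2 to upgrade a weak-gap subroutine to a genuine $h_S$-function. With that modification, your bi-Lipschitz strategy does carry through, and the reverse direction is symmetric as you say.
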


In this paper we discuss computability of the Julia sets of
rational functions. For simplicity, consider the case of
quadratic polynomials $f_c(z)=z^2+c$. By computing the
Julia set $J_c$ of the map $f_c,\;c\in\mathbb{C},$ we mean
the following problem:
 $$\text{given the parameter}\;\;c\;\;\text{compute a
 function}\;\;
h\;\;\text{of the form}\;\;(\ref{comp
fun})\;\;\text{for}\;\;S=J_c.$$ However, an algorithm $M$, computing
$h$, can handle only a finite amount of information. In particular,
it can not read or store the entire input $c$ if $c\notin J_f$.
Instead, it may request this input with an arbitrary high precision.
In other words, the machine $M$ has a command $READ(m)$ which for
any integer $m$ requests the real and the imaginary part of a number
$\phi(m)$, such that $|\phi(m)-c|<2^{-m}$. It can be formalized
using the notion of an {\it oracle}. Let
$$\mathbb{D}=\left\{\frac{k}{2^l}:k\in
\mathbb{Z},l\in\mathbb{N}\right\}$$
  be the set of all dyadic numbers. Denote $$\mathbb{D}^n=\{(x_1,x_2,\ldots,x_n):x_j\in \mathbb{D},\;\;1\leqslant \
j\leqslant n\}$$ the $n$-th
  Cartesian power of $\mathbb{D}$. We will say that the open disk $U_d(c)$ (respectively closed disk
 $U_d(c)$) is
dyadic if $d$ and $c$ are dyadic. Denote $\mathcal{C}$ the set of all subsets $U\subset \mathbb{C}$
such that $U$ can be represented as a finite union of dyadic disks.
 \begin{Def} A function $\phi:\mathbb{N}\rightarrow\mathbb{D}^n$ is called an oracle for an element $x\in
\mathbb{R}^n$ if $\|\phi(m)-x\|<2^{-m}$ for all $m\in \mathbb{N}$,
where $\|\cdot\|$ stands for the Euclidian norm in $\mathbb{R}^n$.
 An {\it oracle Turing Machine} $M^\phi$ is a TM, which can query $\phi(m)$ for any $m\in \mathbb{N}$.\end{Def}
 The oracle $\phi$ is not a part of the algorithm, but rather enters as a parameter.
 In case of a computer program the role of the oracle is usually played by the user,
 who enters the parameters of the program.
 We should note also that an algorithm using an oracle $\phi$ require
 $m$ time units to read $\phi(m)$.

 Now we are ready to define computability and computational complexity of the Julia set of a rational map.
\begin{Def} Let $f$ be a rational map. The Julia set $J_f$ is called computable in time $t(n)$ if there is a Turing Machine with an oracle for the coefficients of $f$, which computes values of a function $h(n,\bullet)$ of the form
(\ref{comp fun}) for $S=J_f$ in time $t(n)$. We say that $J_f$ is
poly-time computable if there exists a polynomial $p(n)$, such that
$J_f$ is computable in time $p(n)$.
 \end{Def}
\subsection{Hyperbolic maps}
A rational map $f$ is called
 hyperbolic if there is a Riemannian metric $\mu$ on a neighborhood of the Julia set $J_f$ in which $f$ is strictly
expanding:
 $$\|Df_z(v)\|_\mu>\|v\|_\mu $$ for any $z\in J_f$ and any tangent vector $v$ (see \cite{M}). It follows that
 for a hyperbolic map $f$ there is a neighborhood $U$ of $J_f$
 on which the metric $d_\mu$ induced by $\mu$ is strictly expanding:
 $$d_\mu(f(x),f(y))\geqslant kd_\mu(x,y),\;\;\text{for any}\;\;x,y\in U,\;\;\text{where}\;\;k>1.$$
 Let $d_S$ be the spherical metric on the Riemann sphere.
   Then $d_\mu$ and $d_S$ are equivalent on a compact neighborhood of $J_f$.
   One can deduce that there exists $N\in \mathbb{N}$, $K>1$ and a neighborhood $V$ of $J_f$, such that
   $$d_S\left(f^N(x),f^N(y)\right)\geqslant Kd_S(x,y),\;\;\text{for any}\;\;x,y\in V. $$
   Moreover, the last property is equivalent to the definition of hyperbolicity.
   Hyperbolic maps have the following topological characterization (see \cite{M}).
 \begin{Prop} A rational map $f$ is hyperbolic if and only if every critical orbit of
 $f$ either converges to an attracting (or a
 super-attracting) cycle, or is periodic.
 \end{Prop}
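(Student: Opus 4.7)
The plan is to prove the two implications separately using standard complex-dynamics tools; since this is a known characterization, I would sketch the argument in the spirit of \cite{M}.

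For the forward direction, assume $f$ is hyperbolic with iterated expansion $d_S(f^N(x),f^N(y))\geq K\,d_S(x,y)$ on a neighborhood $V$ of $J_f$. First I would show that no critical point lies in $J_f$: at a critical point $c$ one has $Df_c=0$, which is incompatible with strict expansion of $f$ in any Riemannian metric near $c$. Next I would show that no forward critical orbit accumulates on $J_f$. If $f^{n_k}(c)\to z\in J_f$, the map $f^N$ is locally injective on $V$ by expansion, so well-defined inverse branches contract spherical distances by a definite factor; iterating these branches from $z$ produces a tail of the orbit that remains in $V$ with derivatives bounded away from zero, contradicting $Df_c=0$ and the chain rule. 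Hence $\omega(c)\subset F_f$ for every critical point $c$.

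Next I would invoke Sullivan's no-wandering-domain theorem and the classification of periodic Fatou components into attracting basins, parabolic basins, Siegel disks, and Herman rings. Parabolic basins are excluded because the associated parabolic periodic point lies in $J_f$ with multiplier of modulus one, which violates expansion. Siegel disks and Herman rings are excluded by the Fatou--Ma\~n\'e theorem, which places the boundary of any rotation domain inside $\omega(c)$ for some critical $c$, contradicting the previous step. Therefore every periodic Fatou component is an attracting or super-attracting basin, and the orbit of every critical point either converges to such a cycle or (in the super-attracting case) is itself periodic.

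For the reverse implication, assume every critical orbit converges to an attracting or super-attracting cycle, or is periodic. Then the closure of the postcritical set $P(f)$ is disjoint from $J_f$, so one can choose an open neighborhood $W$ of $P(f)$ with $f(\overline{W})\subset W$ and $J_f\subset\hat{\mathbb{C}}\setminus\overline{W}$. I would equip $\Omega:=\hat{\mathbb{C}}\setminus\overline{W}$ with its hyperbolic metric $\mu$ (modified to the orbifold metric at super-attracting cycles); since $f:f^{-1}(\Omega)\to\Omega$ is a covering whose domain is strictly contained in $\Omega$, the Schwarz--Pick lemma applied to inverse branches yields strict contraction, hence strict expansion of $f$ with respect to $\mu$ on a neighborhood of $J_f$. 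I expect the main technical obstacle to be the handling of super-attracting cycles in this last step, where critical points themselves are periodic and the naive hyperbolic metric degenerates; the orbifold modification that assigns ramification weights at super-attracting cycles is the delicate ingredient that makes the argument go through.
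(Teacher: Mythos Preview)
The paper does not actually prove this proposition: it is stated with a reference to Milnor \cite{M} and no argument is given. So there is no ``paper's own proof'' to compare against; your sketch is essentially the standard argument one finds in that reference.

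Your outline is correct in both directions. One small comment on the reverse implication: the worry you flag about super-attracting cycles and orbifold weights is largely a non-issue in the hyperbolic (as opposed to subhyperbolic) setting. Under your hypothesis every critical orbit lies in the Fatou set, so $\overline{P(f)}\cap J_f=\varnothing$; the ordinary hyperbolic metric on $\hat{\mathbb C}\setminus\overline{P(f)}$ is already smooth on a neighborhood of $J_f$, and the Schwarz--Pick contraction of inverse branches goes through without any orbifold modification. The only genuine edge cases are the exceptional maps with $|\overline{P(f)}|\le 2$ (e.g.\ $z\mapsto z^d$), where $\hat{\mathbb C}\setminus\overline{P(f)}$ is not hyperbolic; these are handled separately (or via the orbifold device), but they are a finite list, not the main obstacle. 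The orbifold construction becomes essential only in the subhyperbolic case treated later in the paper, where postcritical points are allowed to lie on $J_f$.
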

 Braverman in \cite{Bra04} and Rettinger in
 \cite{Ret05} have independently proven the following result.
 \begin{Th} For any $d\geqslant 2$ there exists a Turing Machine with an oracle for the coefficients of a rational map of degree $d$ which computes the Julia set of every hyperbolic rational map in polynomial time.
 \end{Th}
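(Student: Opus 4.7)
The approach rests on the topological characterization of hyperbolicity from the preceding proposition: the Fatou set of $f$ is the union of basins of finitely many (super-)attracting cycles, and on some neighborhood $V$ of $J_f$ a definite iterate $f^N$ satisfies the expansion estimate $d_S(f^N(x),f^N(y))\ge K\, d_S(x,y)$ with $K>1$. The plan is to exploit this expansion so that deciding whether a pixel lies near $J_f$ costs only time logarithmic in the pixel size, i.e.\ polynomial in $n$.

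First, from the oracle for the coefficients of $f$ one computes all (super-)attracting cycles---they are solutions of $f^k(z)=z$ with $|(f^k)'(z)|<1$ and can be located to any precision via Newton's method---together with explicit trapping neighborhoods $T_1,\ldots,T_m$: each $T_i$ is a finite union of dyadic disks, contained in the immediate basin of the corresponding cycle, with $f(T_i)\subset\mathrm{int}(T_i)$ and $d(T_i,J_f)\ge\delta$ for some $\delta>0$ output by the preprocessing. The Julia set is precisely the complement of $\bigcup_i\bigcup_{k\ge 0}f^{-k}(T_i)$. On input $(n,z)$, the algorithm then iterates a numerical approximation $\tilde z_j\approx f^j(z)$, maintained at working precision $p=O(n)$, for $j=0,1,\ldots,Cn$; it outputs $0$ as soon as some $\tilde z_j$ lands in some $T_i$ and outputs $1$ otherwise.

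Correctness splits into two cases. If $d(z,J_f)\le 2^{-n-2}$, the true orbit stays in a thin neighborhood of $J_f$, disjoint from the $T_i$ by the uniform gap $\delta$, and for $p$ large enough the numerical orbit inherits this property, so $1$ is output. If $d(z,J_f)\ge 2\cdot 2^{-n-2}$, the expansion estimate forces the true orbit to exit $V$ after $O(n)$ applications of $f^N$; once outside $V$, hyperbolicity pushes the point into the basin of some attracting cycle and, by compactness, into one of the traps within a uniformly bounded number of further iterates---so $0$ is output, and the total number of iterations is $O(n)$.

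The dominant technical obstacle is precision control: each evaluation of $f$ can amplify absolute error by the local Lipschitz constant $L$, so to keep the numerical error below $\delta/2$ over $O(n)$ iterations one needs $p=O(n\log L)=O(n)$ bits of working precision; each arithmetic step on a rational function of fixed degree is polynomial in $p$, giving polynomial running time overall. Constructing the $T_i$ effectively---in particular, isolating each attracting cycle from $J_f$ using only approximate coefficient data and certifying multiplier bounds $|(f^k)'|<1$---is the other delicate point, handled by standard root-finding together with a quantitative Koenigs (or B\"ottcher, in the super-attracting case) linearization argument near each cycle.
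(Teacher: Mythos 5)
The paper does not prove this theorem---it cites Braverman \cite{Bra04} and Rettinger \cite{Ret05}---but the subhyperbolic algorithm of Section~2 specializes to the hyperbolic case and serves as the relevant comparison. Against that, your proposal has a genuine gap: the escape-time criterion alone cannot decide the pixel function $h_{J_f}$, which must distinguish $d(z,J_f)\le 2^{-n-2}$ from $d(z,J_f)\ge 2\cdot2^{-n-2}$, a factor-of-two gap in Euclidean distance.

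Here is why. Let $K>1$ be the expansion constant of $f^N$ on the neighborhood $V$ of $J_f$ and $L\ge K$ a Lipschitz constant of $f^N$ there. Since $J_f$ is fully invariant one gets $K\,d(z,J_f)\le d(f^N(z),J_f)\le L\,d(z,J_f)$ for $z\in V$. If the orbit first leaves a $\delta_0$-neighborhood of $J_f$ at time $\tau$ (in units of $f^N$), the only constraint this places on $\varepsilon=d(z,J_f)$ is roughly $\delta_0/L^{\tau}\lesssim\varepsilon\lesssim\delta_0/K^{\tau-1}$, a window whose width ratio $(L/K)^{\tau}$ grows exponentially in $\tau$, hence in $n$. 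Concretely, for your algorithm to output $1$ when $\varepsilon=2^{-n-2}$ the iterate cap must satisfy $Cn < N\log_L(\delta_0 2^{n+2})$, forcing $C<N/\log_2 L$ for large $n$; for it to output $0$ when $\varepsilon=2^{-n-1}$ one needs $Cn\ge N\log_K(\delta_0 2^{n+1})$, forcing $C\ge N/\log_2 K$. Because $K\le L$ these two requirements are incompatible unless $K=L$. Your assertion that when $d(z,J_f)\le 2^{-n-2}$ ``the true orbit stays in a thin neighborhood of $J_f$'' is exactly where the error enters: the expansion pushes the orbit \emph{away} from $J_f$ exponentially fast, so after $O(n)$ iterations it may well have reached a trap.

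The missing idea, which is the heart of the Braverman/Rettinger argument and of the paper's Propositions~\ref{using Koebe} and~\ref{subprogram}, is to track the derivative $|Df^k(z)|$ along the orbit and, once the orbit first passes through a fixed annulus around $J_f$ while still inside a larger neighborhood disjoint from the postcritical set, apply the Koebe distortion theorem to the appropriate inverse branch of $f^k$ to get $K_1/|Df^k(z)|\le d(z,J_f)\le K_2/|Df^k(z)|$ with absolute constants $K_1,K_2$. This recovers the distance to $J_f$ up to a bounded multiplicative factor, independent of $n$; a final subdivision of each pixel into a bounded number of sub-pixels, exactly as described at the end of Section~2.3, then yields a valid $h_{J_f}$. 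The rest of your proposal (locating attracting cycles, building dyadic trapping regions, precision $O(n)$ per arithmetic step) is sound, but without the derivative-plus-Koebe step the construction cannot meet the required precision.
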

 To explain why this is an important result we would like to mention the following.
 It is known that hyperbolicity is an open condition in the space of coefficients
 of rational maps of degree $d\geqslant 2$. The famous conjecture of Fatou conjecture states
  that the set of hyperbolic parameters is dense in this space.
  This conjecture is known as the Density of Hyperbolicity Conjecture.
  It is the central open question in Complex Dynamics.
  Lyubich \cite{L} and Graczyk-Swiatek \cite{GS} independently
  showed that this conjecture is true for the real quadratic family.
  Namely, the set of real parameters $c$, for which the map $f(z)=z^2+c$
  is hyperbolic, is dense in $\mathbb{R}$.

\subsection{Subhyperbolic maps}
A rational map $f$ is called subhyperbolic if
 it is expanding on a neighborhood of the Julia set $J_f$
 in some orbifold metric (see \cite{M}). An orbifold metric is a conformal metric
 $\gamma(z)dz$ with a finite number of singularities of the following form.
 For each singularity $a$ there exists an
 integer index $\nu=\nu_a>1$, such that for the branched covering $z(w)=a+w^\nu$ the induced metric
 $$\gamma(z(w))\left|\frac{dz}{dw}\right|dw$$ in $w$-plane is smooth and nonsingular
 in a neighborhood of the origin. Douady and Hubbard proved the
 following (see \cite{M})
  \begin{Prop}\label{subhyp crit} A rational map is subhyperbolic,
  if and only if every critical orbit of $f$ is either finite or converges to
  an attracting (or a super-attracting) cycle.\end{Prop}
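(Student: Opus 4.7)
The plan is to follow the classical Douady--Hubbard construction via orbifold metrics, as presented in Milnor's book. The key object is a signature $\nu:\hat{\mathbb{C}}\to\mathbb{N}\cup\{\infty\}$ assigning a ramification index to each point; subhyperbolicity will be equivalent to the existence of a finitely supported such $\nu$ that turns $f$ into an orbifold self-covering of a hyperbolic orbifold, whose Poincar\'e metric is then the expanding one.

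For the implication ``critical orbits are tame $\Rightarrow$ subhyperbolic,'' I would build $\nu$ inductively from the postcritical set. Set $\nu\equiv 1$ outside the forward orbits of the critical points. Along each critical orbit that is absorbed by a (super-)attracting cycle, I keep $\nu\equiv 1$ on $J_f$, since the ramification is concentrated in the Fatou component. Along each finite critical orbit (necessarily eventually periodic) in $J_f$, I propagate indices forward using the rule $\nu(f(w))$ is divisible by $\nu(w)\cdot\deg_w f$, starting from the periodic cycle and pulling back; the hypothesis guarantees that only finitely many postcritical points live in $J_f$, so the iteration closes up with all indices finite. A standard Euler-characteristic computation shows that the resulting orbifold is hyperbolic, hence carries a Poincar\'e metric $\gamma(z)|dz|$ with conical singularities of the prescribed type. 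Because $f$ is then an orbifold self-covering of degree $\ge 2$, Schwarz--Pick applied to local inverse branches of $f$ on the universal cover gives $\|Df\|_\gamma\ge k>1$ on a neighborhood of $J_f$.

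For the converse, I would argue contrapositively. Suppose some critical point $c$ has an orbit that is neither finite nor absorbed by an attracting cycle. Then $\omega(c)\subset J_f$ is an infinite compact invariant set. Any orbifold metric $\mu$ in which $f$ is strictly expanding on a neighborhood $V$ of $J_f$ has only finitely many conical singularities and is smooth elsewhere. However, the forward orbit of $c$ eventually enters $V$ and remains there, and each critical point in $V$ must be a conical singularity of $\mu$ (otherwise the expansion inequality $\|Df(z)v\|_\mu\ge k\|v\|_\mu$ fails at $z=c$, where $Df$ has a kernel in the smooth chart). Iterating and using that the orbit of $c$ revisits points arbitrarily close to $\omega(c)$, one produces infinitely many points at which some iterate $f^n$ has critical behavior in $V$, forcing infinitely many singularities of $\mu$ to accumulate in $V$, a contradiction.

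The main obstacle is the quantitative part of the forward direction: turning the abstract orbifold covering into a genuine expansion estimate on a full open neighborhood of $J_f$ in the spherical metric, rather than on $J_f$ alone. This requires handling the conical singularities carefully, by expressing the expansion through local inverse branches of $f$ whose images lie in domains where the Poincar\'e metric is honest, and transferring the estimate back to $f$. Since the proposition is due to Douady and Hubbard and is treated in full in Milnor's book, I would in practice cite that reference for the technical verifications rather than reproduce them here.
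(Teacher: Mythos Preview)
The paper does not prove this proposition at all: it is stated as a result of Douady and Hubbard and attributed to Milnor's book \cite{M}, with no argument given. So there is no proof in the paper to compare your proposal against. Your forward direction is the standard orbifold construction, and in fact the paper carries out exactly this construction in Section~2.2 (Propositions~\ref{uni cov}--\ref{lambda}) in an algorithmic form, building the orbifold $(U,\nu)$ with the divisibility condition $\nu(f(z))\mid \nu(z)\cdot n(f,z)$ and lifting to the universal cover to get expansion via Schwarz--Pick. In that sense your sketch and the paper's later use of the result are aligned, and your final remark that one would cite Milnor for the details matches what the paper actually does.

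Your converse sketch, however, has a genuine gap. You write that the infinite orbit of a critical point $c\in J_f$ would produce ``infinitely many points at which some iterate $f^n$ has critical behavior,'' forcing infinitely many singularities. But the points $f^n(c)$ for $n\ge 1$ are not critical points of $f$; only $c$ itself is. The correct mechanism is different: the compatibility condition that makes $f$ an orbifold self-map forces $\nu(f(z))$ to be divisible by $\nu(z)\cdot\deg_z f$ for every $z$. Since $\deg_c f\ge 2$, one gets $\nu(f(c))\ge 2$, hence $f(c)$ is a branch point; then $\nu(f^2(c))\ge \nu(f(c))\ge 2$, and inductively the \emph{entire forward orbit} of $c$ consists of branch points. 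As the orbifold has only finitely many branch points, the orbit is eventually periodic, hence finite. Separately one must rule out Siegel disks, Herman rings, and parabolic points for critical orbits in the Fatou set, which follows from expansion on a neighborhood of $J_f$. Your contrapositive reaches the right conclusion but via an argument that does not work as written.
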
 For a subhyperbolic map,
  the corresponding orbifold has singularities only at postcritical points, which lie
 in $J_f$. In presence of these singularities the algorithm, which works in the case of
 hyperbolic maps, can not be applied directly for subhyperbolic maps. We show in this paper how to change this algorithm to compute the Julia set of a subhyperbolic map in a polynomial time. One of the results of this paper is the following theorem.
 \begin{Th}\label{subhyp res} There is a TM $M^\phi$ with an oracle for the coefficients of a rational map $f$, such that for every
 subhyperbolic map $f$ the machine $M^\phi$
 computes $J_f$ in polynomial time, given some finite non-uniform information about the orbits of critical points of $f$.
 \end{Th}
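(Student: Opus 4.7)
By Proposition \ref{subhyp crit}, the postcritical set $P(f)$ splits into a finite part $P_J := P(f) \cap J_f$ consisting of pre-periodic critical orbits, which carry the singularities of the orbifold metric $\gamma$, and a part $P_F$ lying in the basins of (super-)attracting cycles. The non-uniform data I would hand to the TM consists of: (i) the list $P_J = \{a_1, \ldots, a_m\}$ together with their ramification indices $\nu_1, \ldots, \nu_m$, presented as computable dyadic approximations; (ii) approximations to the attracting cycles of $f$ and their multipliers, allowing the algorithm to certify when an iterate has entered an immediate basin; (iii) a uniform lower bound $\delta_0 > 0$ on the mutual spherical distances between distinct points of $P_J$ and between $P_J$ and the critical set. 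Equipped with this data, the algorithm follows the Braverman--Rettinger hyperbolic scheme, but replaces the expanding metric on a neighborhood of $J_f$ by the orbifold metric $\gamma$, in which $f$ is uniformly expanding with some factor $K > 1$.

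For a query $(n, z)$ with $z$ a dyadic point at scale $2^{-n}$, the algorithm iterates $z$ under $f$, maintaining a rigorous enclosing spherical ball around $f^k(z)$. If some iterate enters the certified basin neighborhood of an attracting cycle, return $0$; if after $O(n)$ iterations the enclosing ball has grown to a prescribed constant spherical size while avoiding the Fatou components, return $1$; otherwise shrink the working precision and continue. The decisive observation is that $\gamma$ is bi-Lipschitz equivalent to the spherical metric on the complement of the $\delta_0/2$-neighborhood of $P_J$; so each iterate that stays this far from $P_J$ contributes a uniform spherical expansion factor bounded below by some $K' > 1$, and hence $O(n)$ such steps suffice to grow a spherical $2^{-n}$-ball to unit size.

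The principal obstacle is iterating through a neighborhood of a singularity $a_j \in P_J$, where the spherical derivative of $f$ may vanish or blow up. Near such $a_j$ I would pass to the uniformizing coordinate $w = (z - a_j)^{1/\nu_j}$, in which $f$ lifts to a smooth map with nonzero derivative and in which $\gamma$ becomes bi-Lipschitz to the Euclidean metric; the expansion loop then runs in $w$-coordinates until the ball leaves the singular neighborhood, after which the algorithm projects back. Because $|z - a_j| = |w|^{\nu_j}$, a spherical ball of radius $2^{-n}$ corresponds to a $w$-ball of radius $\sim 2^{-n/\nu_j}$, which in turn reaches unit Euclidean size in $O(n/(\nu_j \log K))$ iterations. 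Since $|P_J|$ is finite and the local expansion of $f$ at each $a_j$ in orbifold coordinates forces the orbit to re-enter the $\delta_0/2$-neighborhood of $P_J$ only a bounded number of times before definitive escape, the total iteration count stays $O(n)$. Combined with the $\mathrm{poly}(n)$-precision cost of evaluating $f$, the $\nu_j$-th root, and the basin tests at each step, this yields the claimed polynomial-time bound.
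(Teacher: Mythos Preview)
Your plan has the right skeleton—use the orbifold metric to get uniform expansion and hence an $O(n)$ iteration budget—but it diverges from the paper's argument and carries two real gaps.

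First, the claim that the orbit ``re-enters the $\delta_0/2$-neighbourhood of $P_J$ only a bounded number of times before definitive escape'' is false. The points of $P_J$ are pre-periodic to repelling cycles, and an orbit starting $2^{-n}$-close to such a cycle shadows it for $\Theta(n)$ iterates, not $O(1)$. The correct $O(n)$ bound on total iterations follows directly from $d_\mu(f^k(z),J_f)\geq\lambda^k d_\mu(z,J_f)$ together with the two-sided comparison $C^{-1}d(z,J_f)\leq d_\mu(z,J_f)\leq mC\,d(z,J_f)^{1/m}$ of Lemma~\ref{sub and euclid}; no re-entry counting is needed or available. Second, your stopping rule does not actually separate the two cases. ``An iterate enters the certified basin'' is not a sound reason to output $0$: a point $z$ with $d(z,J_f)<2^{-n-1}$ lying in the Fatou set will equally have its orbit reach the basin, possibly within the same $O(n)$ window. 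And if instead you wait for the \emph{entire} enclosing ball to lie in the basin, note that expansion near $J_f$ blows the ball up by a factor $\sim|Df^k(z)|$, which can be $2^{\Theta(n)}$, so it may never fit inside the fixed basin neighbourhood in polynomial time. Conversely, ``the ball grew to constant size'' carries no information about $d(z,J_f)$ by itself.

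The paper sidesteps all of this with a cleaner device that makes your $\nu_j$-th-root charts unnecessary. It never tracks balls or changes coordinates: it iterates the single point $z$, recording only the scalar $d_i\approx|Df^i(z)|$, and waits for the orbit to cross from an inner neighbourhood $V_3\Supset J_f$ into an outer shell $V_1\setminus V_3$ disjoint from both $J_f$ and the entire postcritical set. At that exit moment the relevant inverse branch of $f^i$ is univalent on a disk of definite radius about $f^i(z)$, and Koebe distortion (Proposition~\ref{using Koebe}) gives $K_1/|Df^i(z)|\leq d(z,J_f)\leq K_2/|Df^i(z)|$; comparing $d_i$ with $K_2\,2^{n+1}$ then decides the pixel. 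The orbifold metric enters only in the analysis, to certify that this exit must occur within $L=O(n)$ steps when $d(z,J_f)>2^{-n-1}$ (Proposition~\ref{subprogram}). Thus the singularities of $\mu$ are never seen by the algorithm, only by the proof that $L$ iterations suffice.
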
  We will specify later, which non-uniform information does $M^\phi$ use.

\subsection{The main results: nonrecurrent critical orbits.}
Let $f$ be a rational map. For a point $z\in \mathbb{C}$ denote
$$\mathcal{O}(z)=\{f^n(z):n\geqslant 1\}$$ the
 forward orbit of  $z$. Let $\omega(z)=\overline{\mathcal{O}(z)}
 \setminus \mathcal{O}(z)$ be the $\omega$-limit set of $z$. Denote $C_f$
 the set of critical points of $f$ which lie in $J_f$. Put
 $$\Omega_f=\bigcup\limits_{c\in C_f}\omega(c).$$
Our main result is devoted to the class of rational maps without recurrent critical points. Namely, we prove here the following:
\begin{Th}\label{main} Let $f$ be a rational map. Assume that $f$ has
 no parabolic periodic points and $\Omega_f$ does not
 contain any critical points. Then $J_f$ is poly-time computable by a TM with an oracle for the coefficients of $f$.
\end{Th}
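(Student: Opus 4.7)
The plan is to extend the Braverman--Rettinger polynomial-time algorithm for hyperbolic maps to the present setting. Observe that the hypotheses---no parabolic cycles and $\Omega_f\cap C_f=\emptyset$ (equivalently, no recurrent critical points in $J_f$)---are precisely the definition of a semihyperbolic rational map in the sense of Carleson--Jones--Yoccoz, and the proof rests on the uniform pullback contraction available in this class.

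First, I would invoke the Carleson--Jones--Yoccoz theorem: there exist constants $r>0$, $\lambda>1$, $K>0$, and $N\in\mn$ such that for every $x\in J_f$ and every $n\ge 0$, every connected component of $f^{-n}(B(x,r))$ has spherical diameter at most $K\lambda^{-n}$ and contains at most $N$ critical points of $f^n$, counted with multiplicity. Dually, this yields uniform exponential expansion along forward orbits away from $C_f$.

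Next, I would use $\Omega_f\cap C_f=\emptyset$ in quantitative form: compactness of each $\omega(c)$ and disjointness from the finite set $C_f$ produce $\delta_0>0$ and $N_0\in\mn$ with $\d(f^n(c),C_f)\ge\delta_0$ for every $c\in C_f$ and $n\ge N_0$. Combined with the CJY bound on the number of critical points in any pullback, this shows that along any forward orbit of length $n$ the number of passes through a fixed $\delta_0/2$-neighborhood of $C_f$ is bounded by a constant independent of $n$. The algorithm on input $(n,z)$ then proceeds as in the hyperbolic case: iterate $z$ forward with certified precision while maintaining a rigorous upper bound $\varepsilon_k$ on the image of the original $2^{-n}$-pixel under $f^k$; by uniform expansion, $\varepsilon_k$ reaches a prescribed macroscopic size after $O(n)$ steps, at which point the value of $h_{J_f}(n,z)$ can be decided in $O(1)$ time by testing membership in a fixed finite cover of the Fatou set, or of a thick compact piece of $J_f$. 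Each iteration can be implemented in time polynomial in $n$ using the oracle for the coefficients of $f$, giving total running time polynomial in~$n$.

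The principal obstacle is the interaction between the forward iteration and the branching at critical points: when an iterate $f^k(z)$ lies close to some $c\in C_f$, the standard derivative-based pixel estimate breaks down. I expect the bulk of Section~3 to be devoted to the precise accounting of these critical passes---using the uniform bound $N$ from CJY together with the local normal form $w\mapsto c+w^{d_c}$---so as to show that after a bounded number of ``damaged'' iterations around each visit to $C_f$, exponential expansion resumes. Controlling the implementation (precision tracking, interval arithmetic, oracle queries) so that the total time remains polynomial is then essentially bookkeeping modelled on the hyperbolic algorithm.
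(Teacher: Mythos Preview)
Your central quantitative claim is incorrect: it is \emph{not} true that along a forward orbit of length $n$ the number of visits to a fixed $\delta_0/2$-neighborhood of $C_f$ is bounded independently of $n$. Take any repelling periodic point $p$ of period $q$ lying within $\delta_0/4$ of some $c\in C_f$ (such $p$ exist since repelling cycles are dense in $J_f$). A point $z$ which is $2^{-n}$-close to $p$ shadows the cycle of $p$ for roughly $n/\log|(f^q)'(p)|$ periods, and hence passes within $\delta_0/2$ of $c$ of order $n$ times, not $O(1)$ times. The CJY degree bound you invoke controls the number of critical points lying \emph{inside the pullback domains} $W_k$ along the orbit, but those domains have diameter $K\lambda^{-(n-k)}$, which for most $k$ is far smaller than $\delta_0/2$; so $z_k$ being close to $c$ in no way forces $c\in W_k$. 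Since your control of the pixel/derivative estimate through critical encounters rests on this $O(1)$ bound, the argument as written does not go through.

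The paper's route is quite different and does not rely on CJY. It applies Ma\~n\'e's theorem to the forward-invariant set $M=\overline{\{f^n(c):n\ge N_0(c)\}}$, obtaining a neighborhood $U\supset M$ on which $f^N$ is uniformly expanding; it then works with the hyperbolic metric on $W=V\setminus(\widetilde C\cup M)$, in which $f$ is nondecreasing everywhere and strictly expanding off $U_\gamma(\widetilde C)\cup U$. The key lemma (Proposition~\ref{critical}) shows that a single pass near $\widetilde C$, followed by the subsequent sojourn in $U$ until exit, is \emph{net expanding} in the Euclidean metric. Combining these three regimes gives a polynomial (in fact quadratic) bound on the escape time, after which a Koebe estimate yields $d(z,J_f)\asymp 1/|Df^k(z)|$. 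If you want to salvage the CJY route, note that the exponential shrinking of pullbacks already gives the $O(n)$ escape time directly, without any count of critical passes; what remains nontrivial is the two-sided Koebe-type estimate at escape, and the precision bookkeeping for $|Df^k(z)|$ when the orbit \emph{does} pass near $C_f$---which, as above, can happen $\Theta(n)$ times.
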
 Although in this case there is no guarantee that any kind of expansion holds on the whole Julia set of $f$,
the following well-known theorem of Ma\~{n}\'{e}  (see \cite{Mane},\cite{Shi}) implies that there is expansion on the closure of
the postcritical set of $f$.
\begin{Th}\label{Mane thm} Let $f$ be a rational map. Let $M\subset J_f$ be a compact invariant  set, such that $M$
does not contain any critical points of $f$ or parabolic periodic points and $M\cap \omega(c)=\varnothing$
 for any recurrent critical point of $f$. Then there exists $N\in \mathbb{N}$,
 such that $\left|Df^N(z)\right|>1$ for any $z\in M$.
\end{Th}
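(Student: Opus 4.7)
The plan is to argue by contradiction: suppose that for every $N\in\mathbb{N}$ there exists $z_N\in M$ with $|Df^N(z_N)|\le 1$. Using compactness and forward invariance of $M$, I would extract a point $z_\ast\in M$ and a sequence $n_k\to\infty$ along which $|Df^{n_k}(z_\ast)|$ stays bounded by $1$. The strategy is to turn this non-expansion into a geometric contradiction by controlling univalent pullbacks along the forward orbit of $z_\ast$.

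The central geometric input is a pullback lemma: for each $z\in M$ there exist a radius $r(z)>0$ and a constant $K(z)$ such that for every $n\ge 0$, the connected component of $f^{-n}\bigl(B(f^n(z),r(f^n(z)))\bigr)$ containing $z$ is mapped univalently onto the disk with Koebe distortion at most $K(z)$. All three hypotheses on $M$ enter here. Since $M$ contains no critical points, sufficiently small disks centered at points of $M$ avoid $C_f$. Since $M\cap\omega(c)=\varnothing$ for every recurrent critical point $c$, the forward orbit of such a $c$ eventually stays at a definite distance from a neighborhood of $M$. For a non-recurrent critical point $c$, the set $\omega(c)$ is a compact forward-invariant subset of $J_f$ not containing $c$ itself, so one can proceed by induction on the finite collection of critical $\omega$-limit sets, ordered by inclusion, to build nested neighborhoods on which all relevant inverse branches are univalent. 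The hypothesis that $f$ has no parabolic cycles prevents slowly repelling periodic points from spoiling the bound $K(z)$.

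Given the pullback lemma, the Koebe theorem implies that the diameter of the univalent pullback neighborhood of $z_\ast$ under $f^{n_k}$ is at least a constant multiple of $r(f^{n_k}(z_\ast))/|Df^{n_k}(z_\ast)|$, which by the contradiction hypothesis does not tend to zero (one passes to a further subsequence so that $f^{n_k}(z_\ast)$ converges inside $M$ and $r(f^{n_k}(z_\ast))$ is bounded below). The univalent inverse branches then form a normal family on a definite disk around $z_\ast$, and any nonconstant limit yields a disk on which the forward iterates $f^{n_k}$ are uniformly bounded — i.e.\ a Fatou component intersecting $M$. Under our hypotheses $M\subset J_f$ contains no attracting, parabolic, or neutral periodic points, and no critical points, which rules out every type of Fatou component meeting $M$; this contradiction yields the desired $N$.

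The main obstacle is the pullback lemma itself, in particular ruling out critical collisions along the pullback tower when $\omega(c)$ of a non-recurrent critical point $c$ meets $M$. Handling this requires the iterated Ma\~{n}\'{e} construction of nested good neighborhoods of $\omega(c_1)\cup\cdots\cup\omega(c_k)$, performed in the order of the partial ordering on critical $\omega$-limits determined by forward orbits. This is the technical core of the proof in \cite{Mane,Shi}.
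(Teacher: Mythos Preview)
The paper does not contain a proof of this theorem. Theorem~\ref{Mane thm} is stated as a known result of Ma\~{n}\'{e}, with citations to \cite{Mane} and \cite{Shi}, and is used as a black box (for instance, to justify the expansion data in the non-uniform information $N2$). There is therefore no ``paper's own proof'' to compare your proposal against.

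That said, your outline is broadly in the spirit of the arguments in \cite{Mane} and \cite{Shi}: the core is indeed a pullback/shrinking lemma controlling components of $f^{-n}(B(w,\delta))$ for $w\in M$, with the inductive treatment of non-recurrent critical $\omega$-limit sets being the technical crux, and Koebe distortion turning geometric shrinking into derivative growth. One point you should tighten is the extraction step: from ``for each $N$ there exists $z_N\in M$ with $|Df^N(z_N)|\le 1$'' it is not immediate that there is a \emph{single} $z_\ast\in M$ with $|Df^{n_k}(z_\ast)|\le 1$ along a subsequence $n_k\to\infty$. Compactness gives a limit point of the $z_N$, but the derivative bound at $z_N$ does not pass to that limit without additional work. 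The proofs in \cite{Mane,Shi} sidestep this by establishing the shrinking lemma uniformly over $M$ first and then reading off $|Df^n|\to\infty$ uniformly on $M$, rather than arguing by contradiction at a single orbit.
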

As we see, the theorem of Ma\~{n}\'{e}  gives expansion only near the points, whose forward orbits are isolated from the
 parabolic periodic points.
The question arises: is it possible to generalize Theorem \ref{main} for the maps $f$ with parabolic periodic points?
It is known that the algorithms, used for computing hyperbolic Julia sets,
 require exponential time in the presence of parabolic points (see \cite{M}, app. H).
  However, in the paper \cite{Bra06} (see also \cite{BY08})
 Braverman proved that for any rational function $f$, such that every critical orbit of $f$
converges either to an attracting or to a parabolic orbit, the Julia set is poly-time computable.
Combining the algorithm used in \cite{Bra06} with the algorithm, which we use in the present paper
to prove Theorem \ref{main}, we obtain the following generalization of Theorem \ref{main}:
\begin{Th}\label{main1} Let $f$ be a rational map. Assume that $\Omega_f$ does not
 contain neither critical points nor parabolic periodic points.
 Then $J_f$ is poly-time computable by a TM with an oracle for the coefficients of $f$.
\end{Th}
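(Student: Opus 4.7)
The plan is to prove Theorem \ref{main1} by gluing together two algorithms: the one developed for Theorem \ref{main} (which handles the non-parabolic case), applied outside small neighborhoods of the parabolic cycles, and Braverman's algorithm from \cite{Bra06}, applied on a fixed neighborhood of the parabolic set. The hypothesis $\Omega_f\cap P=\varnothing$, where $P$ denotes the finite set of parabolic periodic points, is precisely what makes such a partition possible: the postcritical set stays a positive distance from $P$, so the two regimes can be treated separately and stitched together.

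First, I would locate the parabolic cycles. Since $P$ is the set of solutions of $f^k(z)=z$ with multiplier a root of unity for some $k$ bounded by the degree, $P$ is finite, its cardinality is effectively bounded, and its points are algebraic over the coefficients; using the oracle one produces, in polynomial time, a dyadic list $\widetilde P$ of points that approximate $P$ to any given accuracy. Fix a constant $\delta>0$, small enough that $U_{2\delta}(P)\cap\Omega_f=\varnothing$ and $U_{2\delta}(P)$ is contained in the union of parabolic petals together with the parabolic basins; the existence of such $\delta$ is purely qualitative and enters only as non-uniform information, in the same spirit as in Theorem \ref{subhyp res}.

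Away from $U_\delta(P)$ I would run the algorithm of Theorem \ref{main}. The invariant compact set
\[
M=J_f\setminus U_\delta(P)
\]
contains no critical points and no parabolic points, and by the choice of $\delta$ the $\omega$-limit set of every recurrent critical point of $f$ is disjoint from $M$ (indeed $f$ has no recurrent critical points in $J_f$ by assumption, so $\Omega_f\subset M$ trivially satisfies the hypothesis of Theorem \ref{Mane thm}). Ma\~{n}\'{e}'s theorem then yields an iterate $N$ with $|Df^N|>1$ on a neighborhood of $\Omega_f\cap M$, and this is exactly the uniform-expansion input that the proof of Theorem \ref{main} uses to classify pixels lying at distance $\ge\delta$ from $P$ in polynomial time.

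Inside $U_{2\delta}(P)$ I would instead invoke the parabolic module from \cite{Bra06}. Braverman's argument proceeds by choosing attracting and repelling petals at each parabolic periodic point, mapping iterates into a linearizing coordinate (Fatou coordinate), and using polynomial-time bounds on the escape time from the petals to decide membership in $J_f$; this construction is \emph{local} at $P$ and does not need the critical orbits to converge to the parabolic cycles. The two algorithms are then combined in the obvious way: for a query pixel $z$, compare $d(z,\widetilde P)$ with $\delta$ at the accuracy required and dispatch to the appropriate subroutine; on the overlapping annulus either answer is a valid $2^{-n}$-approximation of $J_f$, so consistency is automatic.

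The main obstacle, in my view, is not the expansion or gluing step but extracting from \cite{Bra06} a clean local statement: namely that on any fixed neighborhood of a parabolic cycle one can decide membership in $J_f$ in polynomial time using only an oracle for the coefficients and the qualitative information that $f$ is parabolic of a known cycle structure at $P$. Once this local parabolic routine is isolated from the global geometrically-finite argument of \cite{Bra06}, the rest of the proof reduces to the non-parabolic case handled in Theorem \ref{main} together with the bookkeeping of polynomial time-bounds across the two regimes.
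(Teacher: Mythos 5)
Your overall reading of the hypotheses is right, and the ingredients you name (Ma\~{n}\'{e}'s theorem for expansion on $\Omega_f$, Braverman's parabolic machinery) are the ones the paper uses, but the proposed architecture — dispatch the query pixel $z$ to one of two self-contained subroutines according to whether $d(z,\widetilde P)$ is less than or greater than $\delta$ — does not work, because neither subroutine is local at $z$. Both the algorithm behind Theorem \ref{main} and the algorithm in \cite{Bra06} operate by tracking the forward orbit $f(z), f^2(z), \ldots$ of the query point, and for $z$ lying at distance $\geqslant\delta$ from $P$ but $2^{-n}$-close to $J_f$, that orbit will in general pass through a repelling petal of a parabolic point and spend there a number of iterates that is \emph{exponential} in $n$ before escaping. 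The paper says this explicitly just before Proposition \ref{ProPolyTimePar}: ``a direct analog of Theorem \ref{PropositionPolytimeLeaving} is not true in a presence of parabolic periodic points \ldots\ For a point $2^{-n}$ close to a parabolic fixed point in a repelling petal it takes exponential time in $n$ to escape an $\varepsilon$-neighborhood of the parabolic fixed point.'' So ``run the algorithm of Theorem \ref{main} away from $U_\delta(P)$'' is not a well-posed step: that algorithm's while-loop bound $p(n+1)$ from Theorem \ref{PropositionPolytimeLeaving} simply ceases to hold. A related but smaller issue: the set $M = J_f\setminus U_\delta(P)$ you write down is not forward invariant (points in the repelling petals map further into $U_\delta(P)$), so Theorem \ref{Mane thm} cannot be applied to it directly; the paper applies Ma\~{n}\'{e} to $\Omega_f$ itself, which is invariant by construction.

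What the paper actually does is keep a \emph{single} orbit-tracking algorithm, built on the sequence $z_i = f^{l_i}(z)$ from (\ref{EquationLiZi}), and add one new case to the recursion: when $z_i$ lands in $U_\varepsilon(p_j)$, either $z_i$ is in an attracting sector $V_j^\nu$ (then stop, read off $d(z_i,J_f)\asymp|z_i-p_j|$, and back-propagate via Proposition \ref{using Koebe par}), or $z_i$ is in a repelling petal, in which case one jumps ahead by $k_r = 2^{[Lr]}$ iterates using Lemma \ref{LemmaLongIter} — the $l=[m^n/C]$-iterate computed in time polynomial in $s$ and $\log m$ — rather than iterating one step at a time. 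The escape estimate that makes this polynomial is Proposition \ref{PropParEsc}, which says that after $l=2^{[Ln]}$ iterates in a repelling petal the distance to $p_j$ grows by a factor $2^n$; this replaces the failed analog of Theorem \ref{PropositionPolytimeLeaving}. So the ``clean local statement'' you correctly identify as the missing ingredient from \cite{Bra06} is precisely Lemma \ref{LemmaLongIter} (Lemma 8 of \cite{Bra06}), but it is not used to build a separate decision procedure on $U_{2\delta}(P)$; it is spliced \emph{into the inner loop} of the orbit-tracking algorithm. Your proposal, as written, leaves the exponential-escape-time problem unaddressed and therefore does not yet give a polynomial bound.
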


\subsection{Possible generalizations.}
In this subsection we discuss possible generalizations of the results of this paper.
First we would like to mention that the algorithm which we use to prove Theorem \ref{main1} cannot be applied to compute the Julia set of a
 rational map $f$, such that
 $\Omega_f$ contains a parabolic periodic point. However, we believe that the following statement is true.
\begin{Conj} Let $f$ be a rational map. Assume that $\Omega_f$ does not
 contain any critical points.
 Then $J_f$ is poly-time computable by a TM with an oracle for the coefficients of $f$.
\end{Conj}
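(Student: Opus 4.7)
The plan is to extend the proof of Theorem \ref{main1} so that critical orbits are allowed to accumulate on parabolic periodic points, while critical points themselves remain absent from $\Omega_f$ (so that no critical point in $C_f$ is recurrent). The natural strategy is to combine the Ma\~{n}\'{e} expansion underlying Theorem \ref{main1} with the techniques introduced by Braverman in \cite{Bra06} for handling parabolic dynamics.

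Concretely, I would fix a small neighborhood $V$ of the parabolic periodic cycles of $f$. On any compact invariant subset $M \subset J_f \setminus V$, Theorem \ref{Mane thm} applies directly and yields $N \in \mathbb{N}$ with $|Df^N(z)| > 1$ for every $z \in M$. Inside $V$ one introduces Fatou coordinates: near a parabolic periodic point of multiplicity $\nu+1$ the appropriate iterate is conjugate to $w \mapsto w + 1 + O(1/w)$, which yields explicit polynomial-in-distance transit time estimates and a description of the attracting and repelling petals at any prescribed resolution. The algorithm for drawing $J_f$ would then proceed pixel by pixel as in the proof of Theorem \ref{main1}, but each inverse-branch traversal would be routed through the two regimes: inside $V$ one uses the petal-based procedure from \cite{Bra06}, outside $V$ one uses the Ma\~{n}\'{e} expansion bound, and one switches between the two at every crossing of $\partial V$.

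The main obstacle, and the reason this remains a conjecture rather than a theorem, is the interaction of the two regimes along the critical orbits that accumulate on the parabolic cycles. Such an orbit may enter and leave $V$ infinitely many times, and the expansion constant provided by Theorem \ref{Mane thm} degenerates in a non-uniform way as the set $M$ is allowed to approach the parabolic cycles. One therefore loses the a priori lower bound, present in the proof of Theorem \ref{main1}, on how close a pixel of size $2^{-n}$ may come to a critical orbit before the algorithm is forced to terminate. What seems to be needed is a quantitative refinement of Ma\~{n}\'{e}'s theorem in which the expansion rate on a compact invariant set is controlled by an explicit polynomial function of the distance to the parabolic cycles, combined with a Shishikura-type perturbative analysis of the Fatou coordinates \cite{Shi}. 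Producing such a quantitative statement, and then verifying that the total iteration count for a $2^{-n}$-pixel remains polynomial in $n$ after compounding the parabolic slowdown with the now-non-uniform Ma\~{n}\'{e} expansion, is the heart of the problem.
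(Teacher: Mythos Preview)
The statement you were given is a \emph{conjecture}, not a theorem: the paper explicitly states it as an open problem and provides no proof. In fact, the paper says outright that ``the algorithm which we use to prove Theorem~\ref{main1} cannot be applied to compute the Julia set of a rational map $f$ such that $\Omega_f$ contains a parabolic periodic point,'' and then records the conjecture as a belief. So there is no proof in the paper to compare your proposal against.

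Your write-up is appropriate for this situation: you do not claim a proof, you sketch a natural strategy (combine the Ma\~{n}\'{e} expansion machinery from Section~\ref{SectionNoRecNoPar} with Braverman's parabolic techniques from \cite{Bra06}, switching regimes at the boundary of a parabolic neighborhood), and you correctly isolate the obstruction. Your identification of the difficulty --- that a critical orbit accumulating on a parabolic cycle may cross in and out of the parabolic neighborhood infinitely often, while the Ma\~{n}\'{e} constant degenerates as the invariant set approaches the parabolic points --- is exactly the reason the paper's algorithm breaks down, and matches the paper's own one-line explanation. The only thing to add is that the paper gives no further hints toward a resolution; your suggestion of a quantitative Ma\~{n}\'{e}-type estimate with explicit dependence on the distance to the parabolic cycle is a reasonable line of attack but goes beyond anything the paper attempts.
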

Another important class of rational maps is the class of Collet-Eckmann maps.
\begin{Def} Let $f$ be a rational map. Assume that there exist constants $C,\gamma>0$
such that the following holds:
 for any critical point $c$ of $f$ whose forward orbit
 does not contain any critical points one has:
 \begin{eqnarray}\label{CE} \left|Df^n(f(c))\right|\geqslant Ce^{\gamma n}\;\;\text{for any}\;\;n\in \mathbb{N}.
\end{eqnarray} Then we say that the map $f$ is Collet-Eckmann (CE). The condition $(\ref{CE})$ is called the Collet-Eckmann condition.
\end{Def}
In \cite{AM} Avila and Moreira showed that for almost every real parameter $c$ the map $f_c(z)=z^2+c$
 is either Collet-Eckmann or hyperbolic.
In \cite{Asp} Aspenberg proved that the set of Collet-Eckmann maps
has positive Lebesgue measure in the parameter space of all rational
maps of fixed degree $d\geqslant 2$. Moreover, there is a conjecture
that for almost all rational maps $f$ in this space the following is
true:
\begin{itemize}
\item[1)] the forward orbit of every critical point $c\notin J_f$
either is finite or converges
to an attracting periodic orbit;
\item[2)] for any critical point $c\in J_f$ either there exists $\gamma,C>0$
such that $\left|Df^n(f(c))\right|\geqslant Ce^{\gamma n}$ for any
$n\in \mathbb{N}$ or the forward orbit of $c$ contains another
critical point.
\end{itemize}

By the theorem of Ma\~{n}\'{e} \ref{Mane thm}, every rational map $f$ such that
$\Omega_f$ does not contain neither critical
 points nor parabolic periodic points is Collet-Eckmann.
 We believe that the following generalization of Theorem \ref{main1} is true.
 \begin{Conj}\label{Conj 2}
 Let $f$ be a rational map such that the conditions $1)$ and $2)$ above are satisfied.
 Then $J_f$ is poly-time computable by a TM with an oracle for the coefficients of $f$.
 \end{Conj}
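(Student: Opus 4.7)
The plan is to combine the algorithm developed for Theorem~\ref{main1} with techniques specialized to the Collet-Eckmann setting. First, one would split the critical set of $f$ into three classes: (A) critical points whose orbits converge to an attracting or super-attracting cycle, handled by the Braverman-Rettinger algorithm; (B) critical points $c\in J_f$ whose forward orbit meets another critical point, which can be reduced to a subhyperbolic-type estimate via an auxiliary orbifold metric as in the proof of Theorem~\ref{subhyp res}; and (C) genuinely Collet-Eckmann critical points $c\in J_f$ satisfying the CE condition whose forward orbit avoids all other critical points. The issue distinguishing this conjecture from Theorem~\ref{main1} is precisely class (C), where a critical point is permitted to be recurrent.

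For a CE critical point $c$ of class (C), the idea is to use the exponential growth $|Df^n(f(c))|\geq Ce^{\gamma n}$, together with the Koebe distortion theorem, to produce large univalent disks along the postcritical orbit. Working backward, this yields univalent inverse branches $g$ of $f^n$ defined on a definite neighborhood of any point $z$ close to $\mathcal{O}(c)$, with $|Dg|$ of order $e^{-\gamma' n}$ for some $\gamma'>0$. Hence, to decide whether a pixel of scale $2^{-n}$ meets $J_f$, it should suffice to apply $O(n)$ forward iterations of $f$ after first pulling the pixel through such an inverse branch, and the polynomial time bound would follow by the same strategy as in the main theorem.

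The central technical step, and the likely main obstacle, is a quantitative analogue of Ma\~{n}\'{e}'s Theorem~\ref{Mane thm} valid in the CE setting: for any $z$ lying in a small $\varepsilon$-neighborhood of $\Omega_f$, one needs an iterate $N=N(z,\varepsilon)$, bounded polynomially in $\log(1/\varepsilon)$, after which $|Df^N(z)|\geq 2$. For non-recurrent critical orbits, Theorem~\ref{Mane thm} gives uniform pointwise expansion on a neighborhood of $\Omega_f$, which is the backbone of the present paper's argument. When critical points are recurrent, uniform pointwise expansion generally fails and must be replaced by an averaged estimate obtained by telescoping along the postcritical orbit and invoking CE to control the resulting sums. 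I would draw on the conformal-measure and telescoping arguments developed by Przytycki and Graczyk-Smirnov for CE maps, in order to extract backward contraction rates on inverse branches that are strong enough to drive the algorithm.

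Once this backward contraction estimate is established, the remainder of the argument would parallel Sections~3 and~4 of the present paper: one covers a neighborhood of $J_f$ by dyadic disks, and for each pixel chooses an appropriate inverse branch which, thanks to the quantitative expansion, has derivative small enough after $O(\log(1/\varepsilon))$ steps to resolve $J_f$ within the pixel. The new CE-derived contraction rate is then inserted into the complexity analysis, so that a type-(C) critical orbit is handled algorithmically much like a non-recurrent one.
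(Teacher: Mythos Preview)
The statement you are attempting to prove is labeled in the paper as \emph{Conjecture}~\ref{Conj 2}, not as a theorem: the paper does not prove it and offers no argument for it beyond the sentence ``We believe that the following generalization of Theorem~\ref{main1} is true.'' There is therefore no ``paper's own proof'' against which to compare your proposal. What you have written is a research plan for attacking an open problem, not an alternative proof of an established result.

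As a plan, your outline identifies the correct obstacle: the entire architecture of Section~\ref{SectionNoRecNoPar} rests on Ma\~n\'e's Theorem~\ref{Mane thm}, which supplies uniform expansion on a full neighborhood $U_\delta(U)$ of the postcritical set $M$ (this is the content of the non-uniform information $N2$ and of Lemma~\ref{expansion}). For a recurrent Collet--Eckmann critical point this uniform neighborhood expansion is simply unavailable, and you correctly propose to replace it by a telescoped, averaged estimate along the postcritical orbit. However, your sketch glosses over a serious difficulty: the paper's algorithm (Proposition~\ref{using Koebe nonrec} and the subprogram in \S\ref{SubsecAlgNonrec}) needs not just expansion along the critical orbit but a \emph{univalent} pullback from a definite-size disk around $f^k(z)$ all the way back to $z$, in order to invoke Koebe distortion and relate $d(z,J_f)$ to $|Df^k(z)|$. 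When the critical point is recurrent, backward orbits of nearby points may pass arbitrarily close to the critical point many times, and controlling the resulting loss of univalence (or the degree of the pullback) uniformly in $k$ is exactly the hard part of CE theory. The backward-contraction results of Przytycki and Graczyk--Smirnov you cite do address this, but turning them into a constructive, polynomial-time algorithm with computable constants is not automatic and is presumably why the author left the statement as a conjecture.
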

Thus, we conjecture that \begin{Conj} For almost all rational maps
$f$ of degree $d\geqslant 2$ $J_f$ is poly-time computable.
 \end{Conj}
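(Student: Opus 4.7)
The plan is to reduce this conjecture to two subgoals: (a) proving Conjecture \ref{Conj 2}, which is the corresponding uniform computability statement on the parameter region where conditions 1) and 2) hold; and (b) showing that the set of $f\in\mathrm{Rat}_d$ satisfying 1) and 2) has full Lebesgue measure in the space of all degree-$d$ rational maps. Together (a) and (b) immediately yield the statement, so the work is to establish each of these separately.

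For subgoal (a), I would try to extend the algorithm of Theorem \ref{main1}. The present paper exploits Ma\~{n}\'{e}'s uniform expansion on the postcritical set; this tool is unavailable in the Collet--Eckmann regime of condition 2) since the postcritical orbit may return arbitrarily close to a critical point. The idea is to replace uniform expansion by an averaged CE-expansion derived from $|Df^n(f(c))|\geq Ce^{\gamma n}$: at each pixel one would track the closest approaches of its backward orbit to $C_f$ and use the CE bound to control the distortion of the relevant inverse branches. A thermodynamic-formalism-style telescoping of excursions near $C_f$ should then translate the CE exponent $\gamma$ into a polynomial bound on the pixel-classification time, in the spirit of Braverman's treatment of the parabolic case.

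For subgoal (b), I would appeal to (and where necessary attempt to prove) measure-theoretic statements about $\mathrm{Rat}_d$. Condition 1) is essentially a full-measure form of Fatou's density-of-hyperbolicity conjecture applied to Fatou components, which requires ruling out Cremer points, Siegel disks, and Herman rings on a positive-measure set of parameters. Condition 2) is a multi-dimensional generalization of Avila--Moreira's dichotomy from the real quadratic family to $\mathrm{Rat}_d$, and would be approached via Aspenberg's transversality construction combined with large-deviation estimates for the parameter derivative of $f_t^n(c(t))$ and Borel--Cantelli arguments on close returns.

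The main obstacle is clearly subgoal (a): Conjecture \ref{Conj 2} lies well beyond the techniques of this paper, because once recurrence of critical points inside $\Omega_f$ is allowed, the Ma\~{n}\'{e} expansion on $\overline{\mathcal{O}(c)}$ is lost and must be replaced by a genuinely probabilistic CE-expansion argument that still produces a polynomial-time bound. Subgoal (b), while also open, is closer to existing methods via Avila--Moreira, Aspenberg, and measure-theoretic Fatou-component analysis, and so the decisive difficulty of the whole program is the algorithmic one in (a).
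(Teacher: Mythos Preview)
The statement in question is a \emph{Conjecture}, not a theorem; the paper does not prove it and offers no argument beyond the single connective ``Thus, we conjecture that\ldots''. That connective encodes precisely your decomposition: the conjecture is presented as the conjunction of (a) Conjecture~\ref{Conj 2} and (b) the conjecture, stated just above it, that almost every $f\in\mathrm{Rat}_d$ satisfies conditions 1) and 2). So at the level of reduction your proposal and the paper agree exactly.

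Where you go beyond the paper is in sketching attack lines for (a) and (b). The paper makes no such attempt; it simply records both as open. Your assessment of the difficulties is sound: for (a) you correctly note that Ma\~n\'e's expansion on $\overline{\mathcal{O}(c)}$ is unavailable once recurrent critical points are allowed, and that one would need a distortion-control argument driven directly by the Collet--Eckmann derivative growth rather than by uniform hyperbolicity on a compact invariant set; for (b) you correctly point to Avila--Moreira and Aspenberg as the relevant partial results. One small correction: you describe Conjecture~\ref{Conj 2} as a ``uniform computability statement'', but like Theorems~\ref{main} and~\ref{main1} it would presumably require non-uniform information (analogous to $N1$--$N3$) and an oracle for the coefficients, not a single uniform algorithm over the whole CE locus.

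In short: there is no proof in the paper to compare against, your reduction matches the paper's stated motivation, and you are right that what you have written is a research program rather than a proof.
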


\section{Poly-time computability for subhyperbolic maps.}
In this section we prove Theorem \ref{subhyp res}. Namely,
we construct an algorithm $A$ which for every subhyperbolic
rational map $f$ computes $J_f$ in polynomial time. The
algorithm $A$ uses the coefficients of the map $f$ and some
non-uniform information which we will specify in the
following subsection. \subsection{Preparatory steps and
non-uniform information.} In this paper we actively use the
classical Koebe distortion theorem (see \cite{Con}). Let us
state it here. For $\delta>0,z\in \mathbb{C}$ denote
$$U_\delta(z)=\{w\in \mathbb{C}:|w-z|<\delta\}.$$
\begin{Th}\label{Koebe thm} Let $f:U_r(a)\rightarrow
\mathbb{C}$ be a univalent function. Then for any $z\in
U_r(a)$ one has:
\begin{eqnarray}\label{Koebe1}\frac{(1-|z-a|/r)|f'(a)|}{(1+|z-a|/r)^3}
\leqslant|f'(z)|\leqslant
\frac{(1+|z-a/r)|f'(a)|}{(1-|z-a|/r)^3} \\
\label{Koebe2}\frac{|z-a||f'(a)|}{(1+|z-a|/r)^2}
\leqslant|f(z)-f(a)|\leqslant
\frac{|z-a||f'(a)|}{(1-|z-a|/r)^2}\end{eqnarray} \end{Th}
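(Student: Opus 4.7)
The plan is to reduce the theorem to its standard form on the unit disk and then apply the classical distortion estimates for the Koebe class $S$ of normalized schlicht functions. Given $f:U_r(a)\to\mathbb{C}$ univalent, define
$$g(w)=\frac{f(a+rw)-f(a)}{rf'(a)},\qquad w\in U_1(0).$$
Then $g$ is univalent on the unit disk with $g(0)=0$ and $g'(0)=1$, so $g\in S$. A direct computation gives $f'(z)=f'(a)\,g'((z-a)/r)$ and $f(z)-f(a)=rf'(a)\,g((z-a)/r)$. Hence setting $w=(z-a)/r$ with $|w|<1$, the two inequalities in the statement are equivalent to the classical estimates
$$\frac{|w|}{(1+|w|)^2}\le |g(w)|\le \frac{|w|}{(1-|w|)^2},\qquad \frac{1-|w|}{(1+|w|)^3}\le |g'(w)|\le \frac{1+|w|}{(1-|w|)^3}.$$

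The core ingredient I would use is Bieberbach's coefficient inequality for the class $S$: if $g(w)=w+a_2w^2+a_3w^3+\dots\in S$, then $|a_2|\le 2$. I would derive this in the standard way from Gronwall's area theorem applied to the square-root transform $h(w)=\sqrt{g(w^2)}$, which belongs to the class $\Sigma$ of non-vanishing univalent functions and inherits the nonnegative-area bound $\sum n|b_n|^2\le 1$ on its Laurent coefficients.

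With the Bieberbach bound in hand, the distortion estimates follow from a now-standard argument via disk automorphisms. For any $w_0\in \mathbb{D}$, the renormalized composition
$$G(\zeta)=\frac{g\!\left(\dfrac{\zeta+w_0}{1+\overline{w_0}\zeta}\right)-g(w_0)}{(1-|w_0|^2)\,g'(w_0)}$$
again lies in $S$, and computing its second Taylor coefficient yields
$$\left|\frac{(1-|w_0|^2)g''(w_0)}{2g'(w_0)}-\overline{w_0}\right|\le 2,$$
which upon writing it in polar form along the radius through $w_0$ gives a differential inequality for $\log|g'(re^{i\theta})|$ in $r$. Integrating from $0$ to $|w_0|$ produces the displayed bounds on $|g'(w)|$, and integrating those bounds along the segment from $0$ to $w_0$ gives the displayed bounds on $|g(w)|$.

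The conceptual work is essentially the Bieberbach inequality and the invariance of $S$ under disk automorphisms; the rest is bookkeeping. The main obstacle — really the only non-routine step — is the derivation of $|a_2|\le 2$ from the area theorem via the square-root trick, since one must verify that $h(w)=\sqrt{g(w^2)}$ is actually single-valued and univalent on $\mathbb{D}$. After that, translating the unit-disk estimates back to $U_r(a)$ through the substitution $w=(z-a)/r$ recovers \eqref{Koebe1} and \eqref{Koebe2} verbatim.
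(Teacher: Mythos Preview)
Your proposal outlines the standard textbook proof of the Koebe distortion theorem, and it is correct as sketched: normalize to the class $S$ via $g(w)=(f(a+rw)-f(a))/(rf'(a))$, invoke Bieberbach's bound $|a_2|\le 2$ (obtained from Gronwall's area theorem applied to the odd square-root transform), translate this into a bound on $g''/g'$ via the disk-automorphism renormalization, and integrate. This is exactly the argument one finds in Conway or Pommerenke.

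However, there is nothing to compare against here: the paper does not prove this theorem at all. It is quoted as a classical result with a reference to \cite{Con}, and is used as a black box throughout. So your proof is not ``the same as'' or ``different from'' the paper's---the paper simply has none. If the intent was for you to supply a proof where the paper omits one, your sketch is adequate; the only point I would ask you to tighten is the lower bound on $|g(w)|$, since integrating $|g'|$ along a radial segment gives only an upper bound on $|g(w)|$. The standard fix is to note that $g(\mathbb{D})$ contains the disk of radius $|w|/(1+|w|)^2$ about $0$ via the growth estimate applied along the image side (or equivalently via the Koebe one-quarter theorem and a rescaling), which you should make explicit.
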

The statement (\ref{Koebe2}) of the Koebe distortion
theorem can be reformulated the following way. Let
$1>\alpha>0$, $r_1=\frac{\alpha|f'(a)|r}{(1+\alpha)^2}$,
$r_2=\frac{\alpha|f'(a)|r}{(1-\alpha)^2}$. Then
\begin{eqnarray}\label{Koebe}U_{r_1}(a)\subset f(U_{\alpha
r}(a))\subset U_{r_2}(a).\end{eqnarray} We will also use
Koebe One-Quarter Theorem, which can be derived from Koebe
Distortion Theorem.\begin{Th}\label{Koebe quater} Suppose
$f:U_r(z)\ra \C$ is a univalent function. Then the image
$f(U_r(z))$ contains the disk of radius
${\scriptscriptstyle\frac{1}{4}}r|f'(z)|$ centered at
$f(z)$. \end{Th}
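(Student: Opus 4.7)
The plan is to reduce to the normalized case on the unit disk and then apply the Bieberbach coefficient estimate together with an omitted-value transformation.

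First I would normalize. Given $f:U_r(z)\to\C$ univalent, define
\[
g(w)=\frac{f(z+rw)-f(z)}{r\,f'(z)},\qquad w\in\md.
\]
Then $g$ is univalent on $\md$ with $g(0)=0$ and $g'(0)=1$. A direct check shows that $f(U_r(z))$ contains the disk of radius $\tfrac{1}{4}r|f'(z)|$ about $f(z)$ if and only if $g(\md)\supseteq U_{1/4}(0)$. So the problem reduces to the classical normalized statement for the class $\mathcal{S}$ of schlicht functions.

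Next I would establish the Bieberbach bound $|a_2|\le 2$ for the coefficient $a_2$ in the expansion $g(w)=w+a_2w^2+a_3w^3+\cdots$ of any $g\in\mathcal{S}$. This is the standard consequence of the area theorem applied to the odd square-root transform $\sqrt{g(w^2)}$ followed by inversion: the coefficients of the resulting function in a neighborhood of $\infty$ satisfy $\sum n|b_n|^2\le 1$, and isolating the top coefficient yields $|a_2|\le 2$.

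Then I would run the omitted-value trick. Suppose $w_0\notin g(\md)$. The function
\[
h(w)=\frac{w_0\,g(w)}{w_0-g(w)}
\]
is well defined and univalent on $\md$, with $h(0)=0$ and $h'(0)=1$, so $h\in\mathcal{S}$. A short computation gives $h(w)=w+\bigl(a_2+\tfrac{1}{w_0}\bigr)w^2+\cdots$. Applying the Bieberbach bound both to $g$ and to $h$ yields $|a_2|\le 2$ and $\bigl|a_2+\tfrac{1}{w_0}\bigr|\le 2$, so by the triangle inequality $\bigl|\tfrac{1}{w_0}\bigr|\le 4$, i.e.\ $|w_0|\ge\tfrac{1}{4}$. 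Hence every point of $U_{1/4}(0)$ lies in $g(\md)$, and unwinding the normalization gives the claim.

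The only nontrivial ingredient is the Bieberbach estimate $|a_2|\le 2$; everything else is a routine normalization and the explicit construction of $h$. Since this is a standard classical result used below only as a black box, I would likely just cite \cite{Con} rather than reproduce the area-theorem argument in full.
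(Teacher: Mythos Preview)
Your argument is correct and is the classical direct proof of the one-quarter theorem via Bieberbach's bound $|a_2|\le 2$ and the omitted-value transform. The paper, however, does not prove the theorem this way: it simply records it as a consequence of the Koebe Distortion Theorem stated just above. Concretely, from the lower inclusion in (\ref{Koebe}) (equivalently, the left inequality in (\ref{Koebe2})) one has $U_{r_1}(f(a))\subset f(U_{\alpha r}(a))$ with $r_1=\alpha r|f'(a)|/(1+\alpha)^2$; letting $\alpha\to 1$ gives the disk of radius $\tfrac{1}{4}r|f'(a)|$ inside $f(U_r(a))$.

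The trade-off is this: the paper's route is a one-line limit once the distortion theorem is in hand, which suits its purposes since (\ref{Koebe1})--(\ref{Koebe2}) are used repeatedly elsewhere anyway. Your route is more self-contained and logically prior (the distortion inequalities themselves are usually proved via the same $|a_2|\le 2$ bound), but it requires importing the area theorem. Since you note at the end that you would cite \cite{Con} rather than reproduce the area-theorem argument, in practice both approaches amount to quoting a standard reference; either is acceptable here.
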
 Fatou and Julia proved the following
fundamental result. \begin{Th}\label{crit in attr} Let $f$
be a rational map of degree $d\ge 2$. Then the immediate
basin of each attracting cycle contains at least one
critical point. In particular, the number of attracting
periodic orbits is finite and do not exceed the number of
critical points.\end{Th}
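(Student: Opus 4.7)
The plan is to adapt the classical Fatou--Julia argument. First I would reduce to the case of an attracting fixed point: if $\{z_0,\dots,z_{p-1}\}$ is an attracting cycle, then each $z_i$ is an attracting fixed point of $f^p$, and the critical points of $f^p$ are $\bigcup_{i=0}^{p-1} f^{-i}(C_f)$; a critical point of $f^p$ in the immediate basin of $z_0$ under $f^p$ corresponds to a critical point of $f$ somewhere among the Fatou components of the $f$-cycle. So I may assume $f(z_0)=z_0$ with $\lambda=f'(z_0)$ satisfying $0<|\lambda|<1$ (if $\lambda=0$, then $z_0$ itself is a critical point and we are done). Let $U_0$ be the connected component of the Fatou set containing $z_0$; I want to show $U_0\cap C_f\neq\varnothing$. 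Argue by contradiction: assume $U_0$ contains no critical points of $f$.

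By Koenigs' linearization theorem there is a disk $D_r\subset\mathbb{C}$ and a conformal map $\psi:D_r\to V\subset U_0$ with $\psi(0)=z_0$ and $f(\psi(w))=\psi(\lambda w)$. The strategy is to extend $\psi$ to all of $\mathbb{C}$ using the functional equation: for any $w\in\mathbb{C}$ pick $n$ so large that $\lambda^n w\in D_r$, and set $\psi(w):=f^{-n}(\psi(\lambda^n w))$ using the branch determined by continuity. The only obstruction to this extension is encountering a critical value of $f^n$ in $U_0$, but since $U_0$ is forward invariant and by hypothesis contains no critical points of $f$, it contains no critical points of any $f^n$ either; so $f^n:U_0\to U_0$ is an unramified covering and the lift always exists. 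Formally, one verifies consistency by building the extension one step at a time on the increasing simply connected domains $\lambda^{-n}D_r$. The result is a holomorphic local biholomorphism $\psi:\mathbb{C}\to U_0$ whose image is open, forward invariant, and connected, hence equal to $U_0$.

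This makes $\mathbb{C}$ a holomorphic covering of $U_0$, so by uniformization $U_0$ is biholomorphic to $\mathbb{C}$, to $\mathbb{C}^*$, or to a torus. Tori do not embed in $\hat{\mathbb{C}}$, and an embedded $\mathbb{C}$ or $\mathbb{C}^*$ in $\hat{\mathbb{C}}$ has complement consisting of at most two points; but then $J_f\subset\hat{\mathbb{C}}\setminus U_0$ would have at most two points, contradicting the classical fact that $J_f$ is infinite whenever $\deg f\geq 2$. Hence $U_0$ must contain at least one critical point, proving the first assertion. The "in particular" clause is then a counting argument: the immediate basins associated to distinct attracting cycles are disjoint unions of Fatou components, so each attracting cycle claims at least one critical point from the finite set $C_f$ (which has $2d-2$ elements counted with multiplicity), bounding the number of attracting cycles.

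The main obstacle is the extension step: verifying that the branch-by-branch lift $\psi(w)=f^{-n}(\psi(\lambda^n w))$ is globally well-defined and compatible across overlapping choices of $n$. The cleanest treatment is to set $V_n:=$ the component of $f^{-n}(V)$ contained in $U_0$, observe inductively that $f:V_n\to V_{n-1}$ is a conformal isomorphism (no critical points in $U_0$), note that $\bigcup_n V_n=U_0$ by the definition of the immediate basin, and define $\psi$ on $\lambda^{-n}D_r$ as the unique lift making the diagram commute; the functional equation forces consistency.
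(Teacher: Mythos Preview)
The paper does not supply a proof of this theorem at all: it is quoted as a classical result of Fatou and Julia and used as background (the surrounding text reads ``Fatou and Julia proved the following fundamental result'' and then the statement is given without argument, with Milnor's book as the implicit reference). So there is nothing in the paper to compare your argument against.

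That said, your proof is the standard one and is essentially correct. One small sharpening: in your extension step you actually obtain more than a covering. Since $\psi$ restricted to each $\lambda^{-n}D_r$ is the composition of three conformal isomorphisms
\[
\lambda^{-n}D_r \xrightarrow{\ \cdot\,\lambda^n\ } D_r \xrightarrow{\ \psi_0\ } V_0 \xrightarrow{\ (f^n|_{V_n})^{-1}\ } V_n,
\]
it is injective on each $\lambda^{-n}D_r$; as these disks are nested and exhaust $\mathbb{C}$, the global map $\psi:\mathbb{C}\to U_0$ is itself a conformal isomorphism. Hence $U_0$ is biholomorphic to $\mathbb{C}$ directly, and you can skip the covering/uniformization trichotomy. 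The contradiction then comes immediately: a plane domain biholomorphic to $\mathbb{C}$ omits at most one point of $\hat{\mathbb{C}}$, so $J_f\subset\hat{\mathbb{C}}\setminus U_0$ would have at most one point, impossible for $\deg f\ge 2$.

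Also worth making explicit (you gesture at it): to see $\bigcup_n V_n = U_0$, take any $x\in U_0$, join it to $z_0$ by a path $\gamma\subset U_0$, and use compactness of $\gamma$ together with local uniform convergence $f^n\to z_0$ on $U_0$ to find $N$ with $f^N(\gamma)\subset V_0$; then $\gamma\subset V_N$ since $\gamma$ meets $z_0$. Your claim that $f:V_n\to V_{n-1}$ is a conformal isomorphism uses that $V_{n-1}$ is simply connected, so the proper unramified map $f|_{V_n}$ has degree one; this is fine but deserves a sentence.
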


 In our proof of Theorem \ref{subhyp
res} we will use the following general fact (see e.g. see \cite{Wey}). \begin{Prop}\label{root
finding} Let $h(z)$ be a complex polynomial. There exists a TM $M^\phi$ with an oracle for the
coefficients of $h(z)$ and a natural number $n$ as an input,  such that $M^\phi$ outputs a finite
sequence of complex dyadic numbers $\beta_1,\beta_2,\ldots, \beta_k$ for which:\\$1)$ each
$\beta_i$ lies at a distance not more than $2^{-n}$ from some root of $h(z)$;\\$2)$ each root of
$h(z)$ lies at a distance not more than $2^{-n}$ from one of $\beta_i$. \end{Prop}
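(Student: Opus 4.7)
The plan is to use a Weyl-style recursive subdivision of the complex plane, which is the classical approach to computable polynomial root finding. First I query the oracle for the coefficients of $h(z)=\sum_{k=0}^d a_k z^k$ to a working dyadic precision $2^{-m}$, where $m$ is chosen polynomial in $n$, $d$, and the bit-sizes of crude dyadic bounds $|a_k/a_d|\le M$ and $|a_d|\ge\eta$ obtained by a few preliminary oracle queries. Cauchy's root bound then yields a dyadic $R$ with every zero of $h$ inside the square $Q_0=[-R,R]^2$, which is the starting cell of the subdivision.

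Next I subdivide live cells recursively. For a square $Q\subset Q_0$ with dyadic center $c$ and half-diagonal $r$, I compute certified dyadic approximations to the Taylor coefficients $h^{(k)}(c)/k!$ to precision $2^{-m}$ and apply the Taylor exclusion test
$$|h(c)|>\sum_{k=1}^d \frac{|h^{(k)}(c)|}{k!}\,r^k.$$
If this certified inequality holds, the triangle inequality forces $h$ to be nonzero on $Q$ and $Q$ is discarded; otherwise $Q$ is split into four equal dyadic sub-squares and the recursion continues. Recursion halts at the depth where every surviving cell has diameter at most $2^{-n-3}$, which is bounded above in terms of $R$ and $n$, so the procedure is finite.

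Finally I filter the survivors to enforce requirement $1)$. For each surviving cell $Q$, perturb its boundary slightly to a finer dyadic grid so that $|h|$ is certifiably bounded below on $\partial Q$, and compute the integer-valued winding number
$$N(Q)=\frac{1}{2\pi i}\oint_{\partial Q}\frac{h'(z)}{h(z)}\,dz$$
by sampling $h'/h$ densely along the polygonal boundary. If $N(Q)\ge 1$, output the dyadic center of $Q$ as one of the $\beta_i$; otherwise discard $Q$. Every root of $h$ lies in some surviving cell (roots never pass the exclusion test), and every accepted center lies within $2^{-n-4}<2^{-n}$ of a certified root of $h$ inside $Q$, so both requirements hold.

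The main obstacle is synchronizing the precision parameters: $m$ must be large enough that the exclusion test is robustly verifiable despite the coefficient error $2^{-m}$, while the winding-number step requires a certified strictly positive lower bound on $|h|$ along each perturbed $\partial Q$, which in exceptional cases forces replacing the contour by a further dyadic refinement. These issues are standard in the computable-algebra literature and form the technical content behind the proposition.
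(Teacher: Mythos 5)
The paper does not prove this proposition: it simply cites Weyl's 1924 constructive treatment of the Fundamental Theorem of Algebra, and your subdivision argument is a correct rendering of exactly that classical method. The point worth underlining is that the winding-number filter is genuinely necessary, not merely an optimization: for $h(z)=z^d$ a cell of half-diagonal $r$ whose center is at distance $\delta$ from the origin is excluded by the Taylor test only when $(1+r/\delta)^d<2$, i.e.\ only when $\delta$ exceeds roughly $dr/\ln 2$, so at your fixed stopping depth there remain surviving cells whose centers sit a factor $\Theta(d)$ farther from the nearest root than $2^{-n}$, and outputting all surviving centers would violate requirement $(1)$. Filtering by the argument principle (or, alternatively, descending $O(\log d)$ extra levels and then invoking a proximity bound) removes precisely these spurious cells. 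Two technical points to pin down in a full write-up: the boundary perturbation must stay below, say, $2^{-n-5}$ so that an accepted center still lands within $2^{-n}$ of its certified root; and a contour on which $|h|$ is certifiably bounded below can always be found among $2d+1$ concentric dyadic offsets at spacing $\tau$, since at most $d$ of them come within $\tau/2$ of a zero of $h$ and every remaining one satisfies $|h|\geqslant|a_d|(\tau/2)^d$.
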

Denote $N_F(f)$ and $N_J(f)$ the number of critical points of $f$ which lie in the Fatou set and
the Julia set of $f$ correspondingly. The algorithm computing the Julia set of a
 subhyperbolic map will use the numbers \begin{eqnarray}\label{non uniform info}N_F(f),\;N_J(f)
 \end{eqnarray} as the non-uniform information.

 Observe
 that the assertions $N_F(f)=0$ and $J_f=\hat{\mc}$ are
 equivalent. Indeed, by Proposition \ref{subhyp crit} if
 $N_F(f)>0$ then there is at least one attracting (or
 superattracting) periodic point and the Fatou set is
 nonempty. On the other hand if the Fatou set is nonempty,
 then there is at least one attracting (or
 superattracting) periodic point. The basin of this periodic point
 contains a critical point. Thus if $N_F(f)=0$, then the problem of computing $J_f$ is
trivial. Therefore, we will assume that $N_F(f)\neq 0$ and
$J_f\neq \hat{\mc}$.

Also, without loss of generality we may assume that $\infty\notin
J_f$. Indeed if $\infty\in J_f$, then using Proposition \ref{root
finding} we can find a dyadic point $z_0$ which lie in the Fatou set
of $f$. Let $h:\hat{\mc}\rightarrow\hat{\mc}$ be a M\"{o}bius map
 such that $h(\infty)=z_0$. Consider the map $h^{-1}\circ f\circ
 h$ instead of $f$.

 As a corollary of Proposition \ref{root finding} one can obtain the following result (see
\cite{BY08}, Proposition 3.3). \begin{Prop}\label{attr orbits
finding} Let $f$ be a subhyperbolic rational map. There exists a
Turing Machine $M^\phi$ with an oracle for the coefficients of $f$
such that the following is true. Given the number $N_F(f)$ of
critical points $c\notin J_f$ $M^\phi$ outputs a dyadic set $B$
such that \\$1)$ all the attracting and super attracting orbits of $f$ belong to $B$, \\$2)$ for
any $z\in B$ the orbit of $z$ converges to an attracting periodic orbit, \\$3)$ $f(B)\Subset B$.
\end{Prop}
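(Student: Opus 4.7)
The plan is to exploit the characterization in Proposition \ref{subhyp crit}: for a subhyperbolic map, every critical point $c\notin J_f$ has an orbit that converges to an attracting cycle, and by Theorem \ref{crit in attr} every attracting cycle attracts at least one critical point. First I would use Proposition \ref{root finding} applied to the polynomial $f'$ to produce, for any desired precision $2^{-N}$, dyadic approximations $\tilde c_1,\ldots,\tilde c_m$ of all critical points of $f$. The oracle value $N_F(f)$ tells us how many of these lie in the Fatou set, and the finitely many attracting cycles of $f$ are exactly the $\omega$-limit sets of the orbits of these $N_F(f)$ critical points.

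Next I would iterate each approximate critical point under $f$, dovetailing over input precision $N$ and number of iterates, and search for a near-periodic configuration: a pair of iterates $w=f^n(\tilde c_i)$ and $w'=f^{n+k}(\tilde c_i)$ close enough that $f^k$ can be rigorously certified as a contraction on a small disk containing $w$. From the coefficient oracle one estimates $|Df^k(w)|$ with enough precision to guarantee a bound $|Df^k|<\lambda<1$ on a dyadic disk $D\ni w$; the Koebe distortion theorem \ref{Koebe thm} then converts this one-point derivative estimate into a genuine contraction statement and, together with a fixed-point argument, furnishes an attracting periodic point of period dividing $k$ together with a dyadic disk satisfying $f^k(D)\Subset D$.

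Finally, to convert each period-$k$ trapping disk into a forward-invariant neighborhood under $f$ itself, I would replace $D$ by a finite dyadic cover of $D\cup f(D)\cup\cdots\cup f^{k-1}(D)$, yielding a dyadic set $B_i$ with $f(B_i)\Subset B_i$ around each attracting cycle. Taking $B$ as the union of these $B_i$ over the $N_F(f)$ attracting cycles produces a dyadic set with the required properties. The value $N_F(f)$ serves as the stopping criterion: we continue dovetailing until $N_F(f)$ of the $\tilde c_i$ have been certified to enter such a trapping region, which must eventually happen by subhyperbolicity. The main obstacle, I expect, is the rigorous certification of the strict containment $f(B)\Subset B$ from purely approximate data — this is exactly where Koebe distortion becomes essential, translating point estimates of $|Df^k|$ into honest statements about the image of an explicit computable disk.
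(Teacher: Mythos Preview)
Your approach is broadly sound and reaches the same destination, but the route differs from the paper's in two respects worth noting. First, the paper does not iterate critical points to detect near-periodicity; instead it applies Proposition~\ref{root finding} to the equation $f^m(z)=z$ to locate all periodic points of period at most $m$, and for each approximate periodic point $p_i$ of period $k_i$ it checks computationally whether a dyadic approximation of $f^{k_i}(U_{2^{-m/2}}(p_i))$ is compactly contained in $U_{2^{-m/2}}(p_i)$. When this holds, the Schwarz Lemma --- not Koebe --- certifies an attracting fixed point of $f^{k_i}$ inside. This is a pure set-containment test on dyadic data and sidesteps derivative estimates; in particular it treats super-attracting cycles uniformly, whereas your Koebe step needs $f^k$ univalent on the trapping disk and fails exactly there. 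Your contraction-mapping idea is salvageable (bound $|Df^k|$ on the whole disk directly from Taylor coefficients rather than extrapolating a one-point estimate), but the paper's Schwarz test is both simpler and more robust. Second, the paper builds the forward-invariant set more carefully than your final step: rather than covering $D\cup f(D)\cup\cdots\cup f^{k-1}(D)$, it constructs dyadic sets $B_0,\ldots,B_{k_i-1}$ inductively with $f(B_j)\Subset B_{j+1}$ (indices mod $k_i$), which is what actually yields $f(B)\Subset B$ for the union; a mere cover of the orbit of $D$ gives only $f(B)\subset B$, not strict containment. Both arguments terminate by the same criterion you identify: waiting until $N_F(f)$ approximate critical orbits have entered $B$, and then invoking Theorem~\ref{crit in attr} to conclude that every attracting cycle has been captured.
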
 \begin{proof} The algorithm works as follows. Initially, let $\mathcal{B}$ be an empty
collection of dyadic disks. At $m$-th step, $m\in \mathbb{N}$, do the following. By Proposition
\ref{root finding} there
 is an algorithm which finds all periodic points
 of $f$ of period at most $m$ with precision $2^{-m-3}$.
 Let $p_i$ be approximate position of a periodic point of period
 $k_i$. The corresponding periodic point $z_i$ belongs to the disk
  $U_{2^{-m-3}}(p_i)$. Consider the disk $U_{2^{-m/2}}(p_i)$. If
$U_{2^{-m/2}}(p_i)$ does not intersect neither one of the
disks from $\mathcal{B}$, then
  approximate the image $f^{k_i}(U_{2^{-m/2}}(p_i))$
  by a dyadic set with precision $2^{-m-3}$. Namely, find a set
  $W_i\in \mathcal{C}$ such that
  $$d_H\left(W_i,f^{k_i}(U_{2^{-m/2}}(p_i))\right)< 2^{-m-1}.$$
    Verify if \begin{eqnarray}\label{orbit find1}U_{2^{-m}}(W_i)\subset
    U_{2^{-m/2}}(p_i).
    \end{eqnarray} This would imply that
    \begin{eqnarray}\label{orbit find2}
    f^{k_i}(U_{2^{-m/2}}(p_i))\subset U_{2^{-m-1}}(W_i)\subset
    U_{2^{-m/2}-2^{-m-1}}(p_i).
    \end{eqnarray}
    In this case, compute
 dyadic sets $B_j$, such that
 $B_0=U_{2^{-m}}(W_i)$ and for each
 $j=0,1,\ldots,k_i-1,\;f(B_j)\Subset B_{j+1}$, where $j+1$ is taken
 modulo $k_i$. Add dyadic sets $B_j$ to the collection $\mathcal{B}$.

Next, calculate
  approximations $s_i$ of the images
 $f^m(c_i)$ of critical points
 of $f$ such that $|s_i-f^m(c_i)|<2^{-m-1}$. If there are $N_F(f)$ of the points $s_i$,
  such that $$U_{2^{-m-1}}(s_i)\subset B=\bigcup\limits_{S\in \mathcal{B}}S,$$
   then we stop the algorithm and output
 $B$. Otherwise, go to step $m+1$.

Let us show that the algorithm eventually stops and outputs a set
$B$, satisfying the conditions $1)-3)$ of Proposition \ref{attr
orbits finding}. Let $z$ be an attracting (or super-attracting)
periodic point of period $k$ with multiplier $\lambda$. Let
    $|\lambda|<r<1$. Then for small enough $\varepsilon>0$ one has
    $$f^{k_i}(U_\varepsilon(z_i))\subset U_{r\varepsilon}(z_i).$$
It follows that for some $m>k$ the corresponding approximation $p_i$
of $z$ and the set $W_i$ satisfy the property (\ref{orbit find1}).
On the other hand if
    (\ref{orbit find2}) holds, then by Schwartz Lemma $U_{2^{-m/2}}(p_i)$
 contains an attracting periodic point, whose basin contains
 $U_{2^{-m/2}}(p_i)$. Therefore if the algorithm runs sufficient amount of steps,
  the union $B$ of dyadic sets from
 $\mathcal{B}$ satisfies the condition $1)$ of the Proposition
\ref{attr orbits finding}.

 By Proposition \ref{subhyp crit}, the orbit of each critical point of
 $f$ which does not lie in $J_f$ converges to an attracting periodic
 orbit. Thus, for some $m$ there will be $N_F(f)$ of the points $s_i$, belonging to $B$. This
 implies, then the algorithm stops.
  By Theorem \ref{crit in attr}, we obtain that $B$
   contains all attracting periodic orbits of
  $f$. Notice that conditions $2)$ and $3)$ of the Proposition \ref{attr orbits finding}
   for this set $B$ are satisfied by construction.
\end{proof}

 Denote $CF_f$ the set of critical points of $f$ which lie in the Fatou set of $f$. Put
$$PF_f=\bigcup\limits_{j\geqslant 0}f^j(CF_f).$$ The next
statement is a subhyperbolic analog of Proposition 3.7 from
\cite{BY08}. \begin{Prop}\label{set U} There exists an algorithm
which, given the coefficients of a subhyperbolic rational map $f$ of
degree $d\geqslant 2$ and the number $N_F(f)$, outputs a planar
domain $U\in \mathcal{C}$ such that:\begin{itemize} \item[(1)]
$U\Subset f(U)$, \item[(2)] $f(U)\cap PF_f=\varnothing$, \item[(3)]
$J_f\Subset U$.
\end{itemize} \end{Prop}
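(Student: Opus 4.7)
The plan is to construct $U$ as a finite union of small dyadic disks forming a thin neighborhood of $J_f$, sitting outside a dyadic absorbing set $B'\supset B$ that captures all of $CF_f\cup PF_f$. Condition (1) will follow from the subhyperbolic expansion of $f$ in an orbifold metric on a neighborhood of $J_f$, a consequence of Proposition \ref{subhyp crit}.

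First, I apply Proposition \ref{attr orbits finding} with the non-uniform datum $N_F(f)$ to produce a dyadic set $B$ with $f(B)\Subset B$ absorbing every attracting cycle. Using Proposition \ref{root finding} to approximate the critical points and iterating $f$ on each, I identify the $N_F(f)$ Fatou critical points as exactly those whose orbits enter $B$, and for each such $c$ record the finite pre-entry orbit segment $\{c,f(c),\ldots,f^{N_c-1}(c)\}$; let $P$ denote their union, a finite computable subset of the Fatou set disjoint from $B$ and $J_f$. I then enlarge $B$ to a dyadic $B'=B\cup\bigcup_{p\in P}D_p$ by adding small dyadic disks around each $p$, with radii chosen inductively backward along each orbit so that $f(D_p)\Subset D_{f(p)}$ (or $f(D_p)\Subset B$ when $f(p)\in B$). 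The resulting $B'$ is dyadic with $f(B')\Subset B'$, $CF_f\cup PF_f\subset B'$, and $\overline{B'}$ inside the Fatou set.

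Next, using Proposition \ref{root finding} to locate preimages of the dyadic disk centers of $B'$ and the Koebe theorems \ref{Koebe thm} and \ref{Koebe quater} through the inclusion (\ref{Koebe}) to bound the sizes of preimage components, I compute a dyadic open set $W\supset\overline{f^{-1}(B')}$ with $\overline W$ still disjoint from $J_f$ — feasible because $\overline{f^{-1}(B')}\subset f^{-1}(\overline{B'})$ is a compact subset of the Fatou set at strictly positive distance from $J_f$. I then define $U\in\mathcal C$ as the union of dyadic disks of radius $r$ centered at dyadic lattice points $z$ satisfying $d(z,\overline W)>r$, taking $r$ small enough that the resulting collection covers a neighborhood of $J_f$.

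For the three required properties: condition (3) $J_f\Subset U$ holds by the last step. Condition (2) follows since $U\cap\overline W=\varnothing$ gives $U\cap f^{-1}(PF_f)\subset U\cap f^{-1}(B')=\varnothing$, and therefore $f(U)\cap PF_f=\varnothing$. For (1), because $U$ avoids $\overline{B'}$, which contains the singular set of the subhyperbolic orbifold, the strict orbifold expansion of $f$ on a neighborhood of $J_f$ forces every preimage of $\overline U$ to lie in a thinner Euclidean neighborhood of $J_f$; choosing $r$ small enough, these preimages already lie in $U$, so $\overline U\subset f(U)$. The main obstacle is the dyadic calibration of $r$ for (1): turning the orbifold-metric expansion into a quantitative Euclidean estimate via Koebe and handling the orbifold singularities at postcritical points of $J_f$ through local normal forms of $f$. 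The remaining verifications reduce to root finding, iteration of $f$, and arithmetic on dyadic sets.
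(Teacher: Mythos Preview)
Your construction of the absorbing dyadic set $B'\supset CF_f\cup PF_f$ with $f(B')\Subset B'$ is reasonable and close in spirit to the paper's first step, and conditions (2) and (3) would indeed follow from it. The gap is in condition (1). You try to obtain $U\Subset f(U)$ from the strict expansion of $f$ in a subhyperbolic orbifold metric, but in this paper the orbifold $(U,\nu)$ and the metric $\mu$ are constructed \emph{from} the domain $U$ that Proposition~\ref{set U} outputs (see the construction following Definition~\ref{subhyp met def}). Appealing to orbifold expansion here is therefore circular in the paper's logical order, and you implicitly acknowledge this when you call the ``dyadic calibration of $r$'' the main obstacle: resolving it would amount to proving quantitative versions of Lemma~\ref{sub and euclid} and Proposition~\ref{lambda} \emph{before} you have $U$. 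There is also a misstatement: the branch points of the subhyperbolic orbifold lie in $PJ_f\subset J_f\subset U$, not in $B'$, so the phrase ``$U$ avoids $\overline{B'}$, which contains the singular set of the subhyperbolic orbifold'' is incorrect. Finally, your description of $U$ as ``a thin neighborhood of $J_f$'' does not match the construction you give (all dyadic $r$-disks with centers at distance $>r$ from $\overline W$), which is essentially $\mathbb{C}\setminus\overline W$ and not thin.

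The paper's proof avoids expansion entirely and is much shorter. Since $f(B)\Subset B$, the preimages $f^{-m}(B)$ nest upward; one searches over $m$ for a dyadic $W$ sandwiched between two consecutive preimages and already containing $N_F(f)$ approximate critical points, so that $CF_f\subset f^{-1}(W)$. Then $U=\mathbb{C}\setminus\widetilde W$ for a dyadic $\widetilde W$ sandwiched between $f^{-2}(W)$ and $f^{-3}(W)$. Condition (1) drops out of the nesting $W\Subset f^{-1}(W)$ with no metric input. In your language: instead of taking a thin tube around $J_f$ and invoking expansion, take $U$ to be the \emph{complement} of a further dyadic preimage of your $B'$ and let the inclusion $B'\Subset f^{-1}(B')$ do the work.
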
 \begin{proof} First use the
algorithm from Proposition (\ref{attr orbits finding}) to
find a dyadic set $B$ satisfying to the conditions $1)-3)$
of Proposition \ref{attr orbits finding}. Let $m\in
\mathbb{N}$. We can algorithmically construct a dyadic set
$W$ such that $$f^{-m}(B)\Subset W\Subset f^{1-m}(B).$$
Compute a dyadic number $d>0$ such that $$
U_d\left(W\right)\Subset f^{-1}(W).$$ Also, compute dyadic
approximations $s_i$ of critical points $c_i$ of $f$ such
that $$|s_i-c_i|\leqslant d.$$ By Theorem \ref{subhyp
crit}, for any critical point $c_i\in CF_f$ we will
eventually have: $$s_i\in U_d(c_i)\subset
U_d\left(W\right)\Subset f^{-1}(W).$$ Therefore, for large
enough $m$, the set $W$ will contain $N_F(f)$ of the points
$s_i$. Take such $m$. Then for any $s_i\in W$ one has
$$c_i\in U_d(s_i)\in U_d(W)\subset f^{-1}(W).$$ Thus,
$CF_f\subset f^{-1}(W)$. Compute a dyadic set
$\widetilde{W}$ such that $$f^{-3}(W)\subset
\widetilde{W}\subset f^{-2}(W).$$ Clearly, for the set
$U=\mathbb{C}\setminus \widetilde{W}$ conditions $(1)-(3)$
hold. \end{proof}

\subsection{Construction of the subhyperbolic metric.} Here we give the construction of a
subhyperbolic metric (see \cite{M}).  We modify the construction from \cite{M} to be able to write
it as an algorithm.
  First we recall the definition and basic properties of an orbifold.
  We refer the reader to \cite{M} for details.
\begin{Def}\label{orbifold} An orbifold $(S,\nu)$ is a Riemann surface together with a function
$\nu:S\rightarrow \mathbb{N}$ such that the set $\{z\in S:\nu(z)\neq 1\}$ is discrete. Points $z$
for which $\nu(z)\neq 1$ are called branch points. \end{Def} \begin{Def}\label{subhyp met def} Let
$f$ be a subhyperbolic rational map. An orbifold metric $\mu$ on a neighborhood $U$ of $J_f$ is
called subhyperbolic if $f$ is strictly expanding on $U$ with respect to $\mu$:
$$\|Df(z)\|_\mu>\lambda>1$$ for any $z\in f^{-1}(U)$
 except the branch points.
 \end{Def}
Let $p:S'\rightarrow S$ be a regular branched covering. Then for
every $z\in S$ the local degree of $p$ at a point $w\in p^{-1}(z)$
does not depend on $w$. One can define the weight function
$\nu:S\rightarrow\mathbb{N}$ of the covering $p$ assigning $\nu(z)$
the local degree of $p$ at $w\in f^{-1}(z)$.
\begin{Def} Let $(S,\nu)$ be an orbifold.
 A regular branched covering $$p:S'\rightarrow S$$ with the weight
 function $\nu$ such that $S'$ is simply connected is called a
 universal covering of the orbifold $(S,\nu)$. We will use the notation
 $\widetilde{S}_\nu\rightarrow (S,\nu)$ for a universal covering of this orbifold.
\end{Def} \begin{Prop}\label{uni cov} Let $(S,\nu)$ be an orbifold. The universal covering
$$\widetilde{S}_\nu\rightarrow (S,\nu)$$ exists and unique up to conformal isomorphism, except in
the following two cases:\\
$1)$ $S\thickapprox \widehat{\mathbb{C}}$ (the Riemann sphere) and
$S$ has only one branch point;\\
$2)$ $S\thickapprox \widehat{\mathbb{C}}$ and $S$ has two branch points $a_1,a_2$ such that
$\nu(a_1)\neq\nu(a_2)$. \end{Prop}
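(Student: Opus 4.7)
The plan is to construct the universal covering by first removing the branch points, passing to the ordinary universal cover of the punctured surface, and then quotienting by the correct group so that small loops around branch points acquire the prescribed orders. Let $B = \{a \in S : \nu(a) \neq 1\}$, which is discrete, and set $S^* = S \setminus B$. Let $q : \widetilde{S^*} \to S^*$ be the topological universal cover, with deck group $\pi_1(S^*)$. For each $a \in B$, fix a small disk neighborhood $D_a \subset S$ and let $\gamma_a \in \pi_1(S^*)$ be a loop going once around $a$ inside $D_a$. Let $N \trianglelefteq \pi_1(S^*)$ be the normal closure of $\{\gamma_a^{\nu(a)} : a \in B\}$, and put $X^* = \widetilde{S^*}/N$. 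Then $X^* \to S^*$ is a regular covering in which the monodromy of $\gamma_a$ has order exactly $\nu(a)$ (a generator of the cyclic stabilizer).

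Next I would complete $X^*$ to a Riemann surface $\widetilde{S}_\nu$ by filling in one point above each branch point. Over each small punctured disk $D_a \setminus \{a\} \subset S^*$, the connected components of the preimage in $X^*$ are copies of a punctured disk whose boundary loop maps with degree $\nu(a)$ onto $\gamma_a$; thus each such component is biholomorphic to a punctured disk, and I add the missing center and extend the projection to be locally modeled on $w \mapsto w^{\nu(a)}$. This produces a branched covering $p : \widetilde{S}_\nu \to S$ whose weight function equals $\nu$. Simple connectivity of $\widetilde{S}_\nu$ is then verified by a van Kampen argument: any loop can be pushed off the (discrete) branch set into $X^*$, and $\pi_1(X^*) = N$, whose generators $\gamma_a^{\nu(a)}$ become nullhomotopic in $\widetilde{S}_\nu$ because they bound disks around the filled-in centers.

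Uniqueness follows from the standard lifting argument for orbifold coverings: given another simply connected branched cover $p' : \widetilde{S}' \to S$ with weight function $\nu$, one lifts $p$ through $p'$ away from branch points using simple connectivity of $\widetilde{S}'$ and the matching local degrees, and the resulting holomorphic map extends across the preimages of $B$ by the removable-singularity theorem applied in local orbifold charts; the same argument in the reverse direction gives a two-sided inverse, hence a conformal isomorphism.

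The main obstacle is identifying exactly when the construction fails and producing the exceptional cases (1)--(2). The construction above always yields a branched covering of $(S,\nu)$; what can fail is that $\widetilde{S}_\nu$ is not simply connected, equivalently the group of deck transformations of $p$ does not act transitively on fibers in a way compatible with all the prescribed $\nu(a)$. A direct inspection shows this breakdown is forced precisely when $S^* = \hat{\mathbb{C}} \setminus B$ has cyclic (or trivial) fundamental group and the relations $\gamma_a^{\nu(a)} = 1$ are mutually inconsistent: if $|B| = 1$, then $\pi_1(S^*) = 1$ so no nontrivial quotient exists that produces branching of order $\nu > 1$; if $|B| = 2$ with $\pi_1(S^*) = \mathbb{Z}\langle \gamma \rangle$ and $\gamma_{a_1} = \gamma$, $\gamma_{a_2} = \gamma^{-1}$, then the two relations $\gamma^{\nu(a_1)} = 1$ and $\gamma^{\nu(a_2)} = 1$ force the covering degree to divide $\gcd(\nu(a_1),\nu(a_2))$, which is compatible with both prescribed local degrees only when $\nu(a_1) = \nu(a_2)$. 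In every other topological configuration one checks, by the same local-to-global argument, that $N$ has infinite index in $\pi_1(S^*)$ and the construction yields a genuine simply connected orbifold cover, completing the classification.
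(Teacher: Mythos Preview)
The paper does not prove this proposition at all; it is quoted as background from Milnor \cite{M} (``We refer the reader to \cite{M} for details''). Your construction --- pass to the universal cover of the punctured surface $S^*$, quotient by the normal closure $N$ of the $\gamma_a^{\nu(a)}$, and fill in the punctures --- is exactly the standard argument found there, so in that sense your approach matches the intended reference.

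There is, however, a confusion in your diagnosis of the exceptional cases. You write that ``what can fail is that $\widetilde{S}_\nu$ is not simply connected.'' This is not the failure mode. The space you construct is \emph{always} simply connected: $\pi_1(X^*)\cong N$, and after filling in the centers each generator $\gamma_a^{\nu(a)}$ bounds a disk, so $\pi_1(\widetilde{S}_\nu)=1$ regardless. What actually fails in cases (1) and (2) is the earlier claim that ``the monodromy of $\gamma_a$ has order exactly $\nu(a)$.'' In $\pi_1(S^*)/N$ the image of $\gamma_a$ has order \emph{dividing} $\nu(a)$, and equality is not automatic. When $|B|=1$ on the sphere, $\pi_1(S^*)$ is trivial and the order is $1<\nu(a)$; when $|B|=2$ on the sphere, $\pi_1(S^*)\cong\mathbb{Z}$ with $\gamma_{a_2}=\gamma_{a_1}^{-1}$, so $\pi_1(S^*)/N$ is cyclic of order $\gcd(\nu(a_1),\nu(a_2))$, and the local degree at each filled-in point is this gcd rather than the prescribed $\nu(a_i)$. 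Thus the resulting branched cover has the wrong weight function, and no simply connected cover with weight function $\nu$ exists. Your computation of these two cases is correct, but you should relocate the obstruction to the weight-function step rather than to simple connectivity.

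One further point deserves a sentence of justification: for the non-exceptional orbifolds you assert that ``$N$ has infinite index in $\pi_1(S^*)$ and the construction yields a genuine simply connected orbifold cover,'' but the relevant check is really that each $\gamma_a$ has order exactly $\nu(a)$ in $\pi_1(S^*)/N$. For $S\not\approx\hat{\mathbb C}$, or for $|B|\ge 3$ on the sphere, this follows from the presentation of the orbifold fundamental group (free product with amalgamation / triangle-group arguments), and is worth stating explicitly rather than leaving as ``one checks.''
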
 The Euler characteristic of an orbifold $(S,\nu)$ is the number
$$\chi(S,\nu)=\chi(S)-\sum\limits_{z\in S}\left(1-\frac{1}{\nu(z)}\right).$$ Since the set of
branch points of $S$ is discrete the last sum contains at most countable number of nonzero terms.
If $S$ contains infinitely many branch points then we set $\chi(S,\nu)=-\infty$. The orbifold
$(S,\nu)$ is called hyperbolic if $\chi(S,\nu)<0$. \begin{Lm}\label{uni cov hyp} If $(S,\nu)$ is a
hyperbolic orbifold then $\widetilde{S}_\nu$ conformally isomorphic to the unit disk. \end{Lm}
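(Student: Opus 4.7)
The plan is to combine the Uniformization Theorem with an orbifold version of the Riemann--Hurwitz formula. By Uniformization, the simply connected Riemann surface $\widetilde{S}_\nu$ is conformally isomorphic to one of $\widehat{\mathbb{C}}$, $\mathbb{C}$, or $\mathbb{D}$. I would exclude the first two possibilities using the assumption $\chi(S,\nu)<0$.

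First I would derive a multiplicativity of Euler characteristics: for the universal orbifold covering $p:\widetilde{S}_\nu\to S$ of (possibly infinite) degree $d$, the defining property of the orbifold cover is that every preimage of a branch point $z$ with $\nu(z)=k$ has local degree exactly $k$, so the fiber over $z$ has $d/k$ points. The classical Riemann--Hurwitz computation then yields
$$\chi(\widetilde{S}_\nu)\;=\;d\,\chi(S)\,-\,\sum_{z\in S}\frac{d}{\nu(z)}\bigl(\nu(z)-1\bigr)\;=\;d\cdot\chi(S,\nu).$$

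Now I would treat the two excluded alternatives. If $\widetilde{S}_\nu\cong\widehat{\mathbb{C}}$, the deck transformation group is a finite subgroup of $\operatorname{Aut}(\widehat{\mathbb{C}})$ of some order $d$, and the identity above gives $2=d\cdot\chi(S,\nu)$, so $\chi(S,\nu)=2/d>0$, contradicting the hypothesis. If $\widetilde{S}_\nu\cong\mathbb{C}$, the deck group is a discrete subgroup of the affine group $\operatorname{Aut}(\mathbb{C})$; the classical enumeration of such groups (generated by translations together with rotations of order $2,3,4$, or $6$) shows that every quotient orbifold lies on the short list of flat orbifolds--the torus $\mathbb{C}/\Lambda$, the pillowcase with signature $(2,2,2,2)$, the three triangle orbifolds $(2,3,6)$, $(2,4,4)$, $(3,3,3)$, and their open/cylindrical relatives--and a direct case-by-case computation of $\chi(S,\nu)$ on this list gives $0$ in every case, again contradicting $\chi(S,\nu)<0$. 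Hence only $\widetilde{S}_\nu\cong\mathbb{D}$ remains.

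The main obstacle will be making the parabolic alternative $\widetilde{S}_\nu\cong\mathbb{C}$ rigorous: the covering there has infinite degree, so the multiplicativity identity is not directly available, and one must lean on the classification of discrete affine group actions instead. A cleaner unified route I would keep in reserve is to pull back the standard curvature-zero metric from $\mathbb{C}$ (respectively the curvature $+1$ metric from $\widehat{\mathbb{C}}$) to $S$ and invoke an orbifold Gauss--Bonnet identity, which forces $\chi(S,\nu)\ge 0$ in both excluded cases in one stroke.
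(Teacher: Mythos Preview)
The paper does not supply a proof of this lemma; it is stated without argument as a standard fact, with the reader referred to Milnor's book \cite{M} for the underlying orbifold theory. So there is no proof in the paper to compare yours against.

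Your argument is essentially the standard one found in that reference: Uniformization reduces the question to excluding $\widehat{\mathbb{C}}$ and $\mathbb{C}$, the spherical case is handled by the orbifold Riemann--Hurwitz identity (the cover is automatically of finite degree), and the Euclidean case is handled by enumerating the discrete affine groups. One small imprecision worth tightening: it is not true that every orbifold covered by $\mathbb{C}$ has $\chi(S,\nu)=0$. The plane itself, the plane with a single cone point (cover $z\mapsto z^k$), and the infinite cylinder with one or no cone points all have $\chi(S,\nu)>0$. The correct statement of your enumeration step is that each quotient has $\chi(S,\nu)\geqslant 0$, which of course still contradicts hyperbolicity. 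Your reserve route via an orbifold Gauss--Bonnet argument is indeed the cleanest way to treat both excluded cases uniformly and sidesteps the infinite-degree issue you flagged.
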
 Let
$f$ be a subhyperbolic rational map. Let $U$ be the set from Proposition \ref{set U}. Construct an
orbifold $(U,\nu)$ in the following way. Put $S=U$. Denote $CJ_f$ the set of the critical points of
$f$ which lie in $J_f$. As the set of branch points of
 $U$ take $BP=\{f^j(c),c\in CJ_f,j\in \mathbb{N}\}$. Since
 $f$ is subhyperbolic, $BP$ is finite. Put $\nu(z)=1$ for all
 $z\in U\setminus BP$. Denote $n(f,z)$ the local degree of $f$
 at $z$. Define numbers $\nu(a),a\in BP$ such that the following
 condition holds:\begin{eqnarray}\label{cond for orb}
 \text{for any}\;\;z\in U\;\;\nu(f(z))\;\;\text{is a multiple of}
 \;\;\nu(z)n(f,z).\end{eqnarray}
 \begin{figure}\centering\epsfig{file=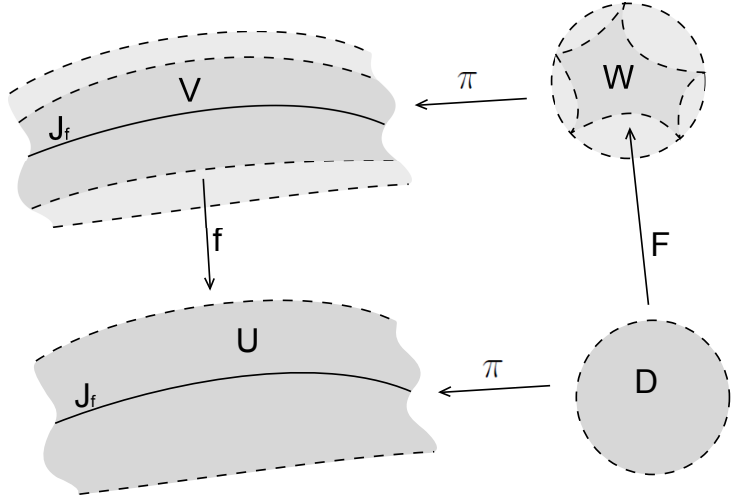,width=0.7\linewidth}
\caption{Construction of the subhyperbolic metric}\end{figure} If
the orbifold $(U,\nu)$ that we
  obtained is not hyperbolic, take any repelling orbit $\{z_j\}$
  of $f$ of the length at least $5$ and replace $\nu(z_j)$ with
  $2\nu(z_j)$ for all $z_j$ from this orbit. The new orbifold
  will be hyperbolic and satisfying
   the condition (\ref{cond for orb}).

By Proposition \ref{uni cov} there exists a universal covering
$$\widetilde{U}_\nu\overset{\pi}{\rightarrow}(U,\nu).$$ By Lemma \ref{uni cov hyp}, without loss of
generality we may assume that $\widetilde{U}_\nu=\mathbb{U}$ is the open unit disc. Since
$V=f^{-1}(U)\subset U$, Condition (\ref{cond for orb}) guarantee that the map $f^{-1}$ lifts to a
holomorphic map $$F:\mathbb{U}\rightarrow \mathbb{U}.$$ Note that $W=F(\mathbb{U})= \pi^{-1}(V)$ is
strictly contained in $\mathbb{U}$. Denote $\rho_\mathbb{U}$ the Poincar\'{e} metric on
$\mathbb{U}$. By the Schwartz-Pick Theorem, the map $F$ is strictly decreasing in the metric
$\rho_\mathbb{U}$. Let $\mu$ be the projection of $\rho_\mathbb{U}$ onto $U$. For any $w\in
\mathbb{U}$ and $z=\pi(F(w))$ one has: \begin{eqnarray}\label{Dfz}\|Df(z)\|_\mu=\|DF(w)\|^{-1}.
\end{eqnarray} It follows that the map $f$ is strictly expanding with respect to the norm induced
by $\mu$. To show that the metric $\mu$ is subhyperbolic we need to
prove that $\mu$ is uniformly strictly expanding. First we will
prove an auxiliary lemma.
 \begin{Lm}\label{sub and euclid}
    There exist a constant $C>0$ such that
   for any $z\in U$ except the branch points of the orbifold $(U,\nu)$ one has:
    $$C^{-1}<\left|\frac{\mu(z)}{dz}\right|<C\max \left\{|z-a_j|^{\frac{1}{\nu(a_j)}-1}\right\},$$
    where $a_j$ are the branch points.
   The constant $C$ can be obtained constructively.\end{Lm}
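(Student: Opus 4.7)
The plan is to split the estimate into a local analysis at each branch point plus a global compactness argument, using the explicit Poincar\'{e} density $\rho_{\mathbb{U}}(w) = 2/(1-|w|^2)$ on the covering disk and the defining identity $\mu(z) = \rho_{\mathbb{U}}(w)/|\pi'(w)|$ whenever $\pi(w) = z$ is not a branch point.

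First I would fix a branch point $a_j$ with index $k = \nu(a_j)$, a preimage $w_0 \in \pi^{-1}(a_j)$, and a round disk $D(w_0,r)\subset \mathbb{U}$ so small that $\pi|_{D(w_0,r)}$ has degree $k$ and factors uniquely through the $k$-th power map. The local normal form of a branched holomorphic map furnishes a univalent coordinate $\zeta$ on $D(w_0,r)$ with $\zeta(w_0)=0$ and $\pi(w) = a_j + \zeta(w)^k$, so $|z-a_j| \asymp |\zeta|^k$ and $|\pi'(w)| \asymp k|\zeta|^{k-1}$ with multiplicative constants controlled by $|\zeta'(w_0)|$ and $\rho_{\mathbb{U}}(w_0) = 2/(1-|w_0|^2)$. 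Therefore
\[
\frac{\mu(z)}{|dz|} \;=\; \frac{\rho_{\mathbb{U}}(w)}{|\pi'(w)|} \;\asymp\; |z-a_j|^{1/\nu(a_j)-1}
\]
on a punctured neighborhood $D_j^{\ast}$ of $a_j$. Because $1/\nu(a_j)-1 < 0$, this gives the $|z-a_j|^{1/\nu(a_j)-1}$ term of the upper bound on $D_j^{\ast}$ and simultaneously forces $\mu(z)\to\infty$ as $z\to a_j$, so the lower bound $C^{-1}$ is automatic there.

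Away from the branch points I would work on the compact set $K = \overline{V}\setminus\bigcup_j D_j$, where $V = f^{-1}(U)\Subset U$ is the region on which the subhyperbolic estimate (\ref{Dfz}) is actually used. Each point of $K$ has an evenly covered neighborhood under $\pi$, so $\mu$ is a smooth strictly positive conformal metric on an open set containing $K$; by compactness one gets uniform two-sided bounds $c_1 \leqslant \mu/|dz| \leqslant c_2$ on $K$. Combining with the local asymptotics yields the global upper bound $\mu/|dz| \leqslant C\max_j|z-a_j|^{1/\nu(a_j)-1}$, because on $K$ every factor $|z-a_j|^{1/\nu(a_j)-1}$ is bounded below by a positive constant depending only on $\mathrm{diam}(U)$ and $\nu(a_j)$, so the maximum absorbs $c_2$; the lower bound $\mu/|dz| \geqslant C^{-1}$ combines $c_1$ on $K$ with the branch-point blow-up on the disks $D_j^{\ast}$.

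For constructivity of $C$, the branch points $a_j$ and indices $\nu(a_j)$ are obtained from the coefficients of $f$ and the orbit-finding procedure of Proposition \ref{root finding}, and the disks $D_j$ can be chosen dyadic. The ambient constants $c_1, c_2$ are the core difficulty, since they depend on the implicit orbifold covering $\pi$. The cleanest route to effectivity is to compare $\mu$ with the explicit model conformal metric $\prod_j |z-a_j|^{1/\nu(a_j)-1}|dz|$, which has exactly the prescribed singular behavior at each branch point, and then invoke the Ahlfors--Schwarz lemma on the simply connected cover to convert the qualitative two-sided comparison into explicit constants depending only on the dyadic description of $U$ and the postcritical data of $f$. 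This last comparison is the main technical obstacle, as it requires matching the curvature of the model metric to that of $\mu$ in an effective way; the finiteness of the branch set and the fact that $\overline{V}\Subset U$ make this feasible, but the bookkeeping is delicate.
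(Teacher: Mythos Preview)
Your local analysis at the branch points via the normal form $\pi(w)=a_j+\zeta(w)^k$ matches the paper's, and compactness on $\overline{V}\setminus\bigcup_j D_j$ certainly yields two-sided bounds there. As a proof that \emph{some} $C$ exists, the outline is fine.

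The gap is the clause ``$C$ can be obtained constructively''. Your constants $c_1,c_2$ come from compactness and carry no explicit value; even the local constants at a branch point depend on $|\zeta'(w_0)|$ and $\rho_{\mathbb{U}}(w_0)=2/(1-|w_0|^2)$ for a preimage $w_0\in\pi^{-1}(a_j)$ that you cannot locate, since $\pi$ is known only to exist by Proposition~\ref{uni cov}. Your proposed remedy via Ahlfors--Schwarz against the model density $\prod_j|z-a_j|^{1/\nu(a_j)-1}$ does not work as stated: that density is flat away from the $a_j$ (its logarithm is harmonic there), so its curvature is $0$, not $\leqslant -1$, and Ahlfors--Schwarz says nothing; even with the curvature repaired, Ahlfors--Schwarz would only bound the hyperbolic metric from below, whereas it is the \emph{upper} bound on $\mu$ that carries the singular factor. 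You correctly flag this step as the unresolved ``main technical obstacle'', and it really is one in your approach.

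The paper eliminates all dependence on the unknown covering by a single normalization: for each non-branch $z$, use the transitive deck-group action to arrange $\pi(0)=z$, so that $\mu(z)/|dz|=2/|\pi'(0)|$. Two applications of the ordinary Schwarz Lemma then finish the job. Lower bound: $\pi$ sends $\mathbb{U}$ into a Euclidean disk of computable radius $R$ containing $U$, hence $|\pi'(0)|\leqslant R$ and $\mu\geqslant 2/R$ everywhere. Upper bound away from the $a_j$: the single-valued branch $\phi:U_\varepsilon(z)\to\mathbb{U}$ of $\pi^{-1}$ with $\phi(z)=0$ has $|\phi'(z)|\leqslant 1/\varepsilon$, so $|\pi'(0)|\geqslant\varepsilon$. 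Near $a_j$ the same Schwarz argument is applied to $g^{-1}$ in the factorization $\pi=a_j+g^m$. The only inputs to $C$ are the dyadic numbers $R$ and $\varepsilon$, both computable from the description of $U$ and Proposition~\ref{root finding}; no curvature estimate, no model metric, and no information about $\pi$ beyond its bare existence.
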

\begin{proof} Let $z\in U$. Assume that $z$ is not a branch point. To
estimate $\mu(z)$ without loss of generality we may assume that $z=\pi(0)$. Recall that the
 Poincar\'{e} metric on the unit disk is of the form:
 $$|\rho_\mathbb{U}(w)|=\frac{2|dw|}{1-|w|^2}.$$ Therefore one
has:$$\left|\frac{\mu(z)}{dz}\right|=\frac{2}{|D\pi(0)|}.$$ We can
construct a dyadic number $R$ such that $U$ is contained in a disk
of radius $R$. By Schwartz Lemma, $$|D\pi(0)|\leqslant R.$$ Thus,
$$\left|\frac{\mu(z)}{dz}\right|\geqslant 2R^{-1}.$$

 \begin{figure}\centering\epsfig{file=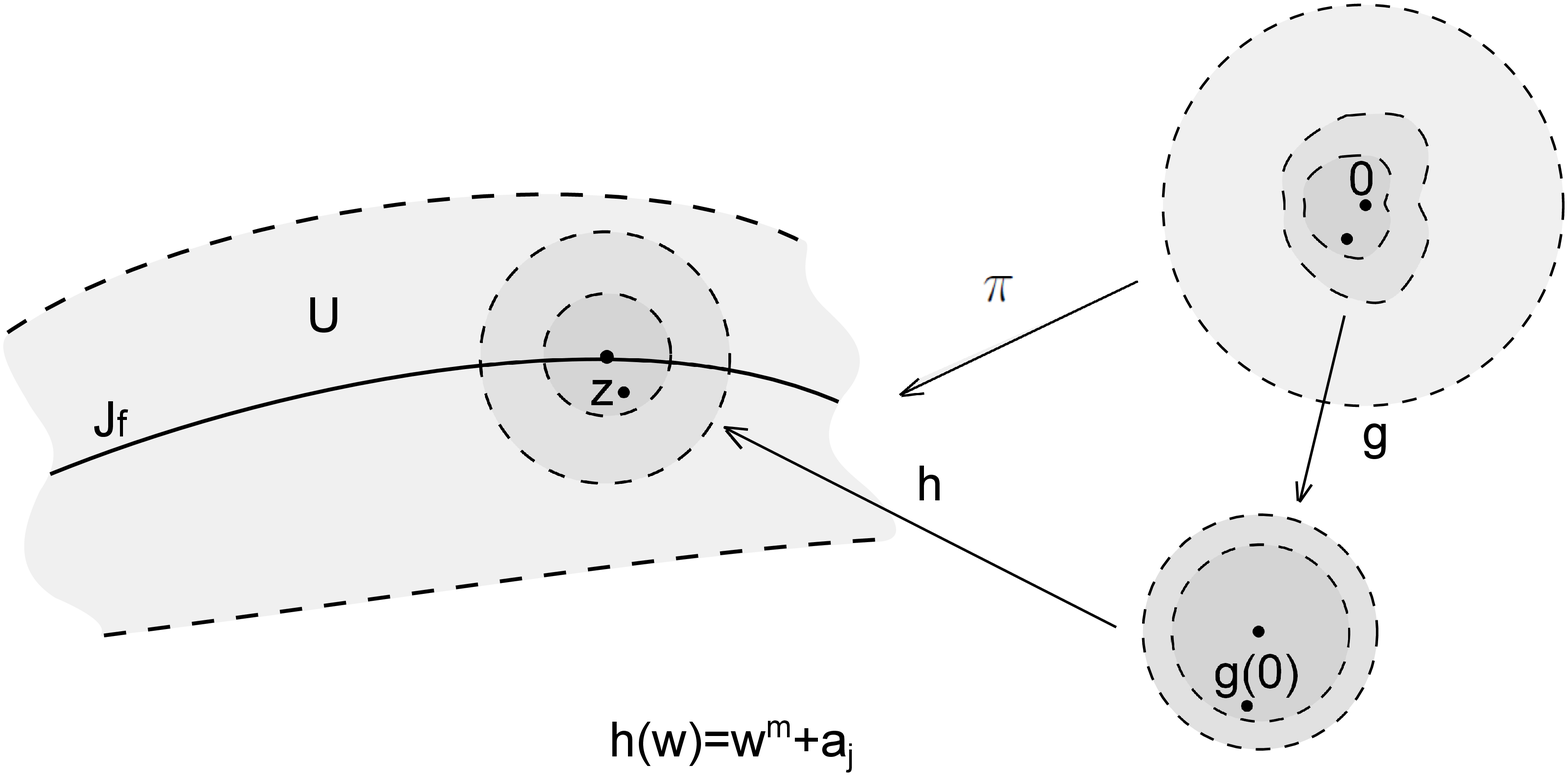,width=\linewidth}
\caption{Illustration to Lemma \ref{sub and
euclid}}\end{figure}
Recall that the set $BP$ of branch points
consists of critical points which lie in $J_f$ and possibly one
repelling periodic orbit. It follows from Proposition \ref{root
finding} that we can construct a dyadic number $\varepsilon>0$
 such that disks $U_{2\varepsilon}(a_j)$ are pairwise disjoint and all belong to $U$.
  Assume first that $z\notin \bigcup U_\varepsilon(a_j)$. Considering the branch
  $$\phi:U_\varepsilon(z)\rightarrow \mathbb{U},$$ of $\pi^{-1}$
   such that $\phi(z)=0$, by Schwartz Lemma
  we obtain $$|D\pi(0)|\geqslant \varepsilon.$$ Let $z\in U_\varepsilon(a_j)$ for some $j$.
   Then in a neighborhood $V(0)$ of $0$ the map
  $\pi$ can be written in the form $$\pi(w)=g(w)^m+a_j,$$ where $g(w)$ is a one to one
   map from $V(0)$ onto $U_\delta(0)$ with $\delta=(2\varepsilon)^{1/m}$.
   In $V(0)$ one has \begin{eqnarray}\label{D pi}D\pi(w)=mg(w)^{m-1}Dg(w).\end{eqnarray}
    Consider the map $\chi=g^{-1}:U_\delta(0)\rightarrow V(0).$
   Since $$|g(0)|=|z-a_j|^{1/m}\leqslant 2^{-1/m}\delta,$$ by Schwartz Lemma we get:
   $$|D\chi(g(0))|\leqslant K=K(\varepsilon),$$ where constant $K>0$ can be obtained
   constructively. Now (\ref{D pi}) implies $$ |D\pi(0)|\geqslant mK^{-1}|z-a_j|^{1-1/m},$$
   which finishes the proof.
   \end{proof}

\begin{Prop}\label{lambda} There exists a constant $\lambda>1$ such that $$\|Df(z)\|_\mu>\lambda$$
for any $z\in V=f^{-1}(U)$. The constant $\lambda$ can be constructed algorithmically. \end{Prop}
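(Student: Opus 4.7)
The plan is to deduce the uniform expansion from the Schwartz--Pick lemma applied to the holomorphic lift $F\colon\mathbb{U}\to\mathbb{U}$. By the construction preceding the proposition, $F(\mathbb{U})=W=\pi^{-1}(V)$ is strictly contained in $\mathbb{U}$, and $(\ref{Dfz})$ identifies $\|Df(z)\|_\mu$ with $\|DF(w)\|^{-1}$ in the Poincar\'e metric of $\mathbb{U}$. So it suffices to produce a constructible constant $k<1$ with $\|DF(w)\|\le k$ on $\mathbb{U}$; then any dyadic $\lambda\in(1,1/k)$ works.

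To bound $\|DF\|$, I would factor $F=\iota\circ F^\sharp$, with $F^\sharp\colon\mathbb{U}\to W$ the co-restriction and $\iota\colon W\hookrightarrow\mathbb{U}$ the inclusion. Schwartz--Pick applied to $F^\sharp$ (viewed as a holomorphic map into the hyperbolic Riemann surface $W$) gives $\|DF^\sharp\|\le 1$, while the inclusion has derivative norm $\|D\iota(z)\|=\rho_\mathbb{U}(z)/\rho_W(z)$, which is strictly less than $1$ pointwise on $W$ by monotonicity of the Poincar\'e density; since $\bar W\Subset\mathbb{U}$, this ratio is uniformly at most some $k<1$ on $\bar W$. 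Composing, $\|DF(w)\|\le k$ throughout $\mathbb{U}$. To make $k$ dyadic, I would exhibit $r<1$ with $\bar W\subset D_r:=\{|w|\le r\}$: an explicit computation shows $\rho_\mathbb{U}/\rho_{D_r}\le r$ on $D_r$, and by monotonicity $\rho_W\ge\rho_{D_r}$, so one may take $k=r$. Such an $r$ is produced from the dyadic Euclidean margin $\delta>0$ between $\bar V$ and $\partial U$ (available from Proposition \ref{set U}) by pulling back through $\pi$: Lemma \ref{sub and euclid} gives constructive two-sided comparison of $\mu$ with the Euclidean metric away from branch points, and the Koebe distortion theorem (Theorem \ref{Koebe thm}) applied to univalent local branches of $\pi^{-1}$ transfers this into a lower bound for $|\pi'|$, and hence for $\rho_\mathbb{U}$, along $W$; this yields an explicit upper bound on $|w|$ for $w\in W$.

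The main obstacle is the behavior near the branch points, where $\mu\sim|z-a|^{1/\nu(a)-1}$ degenerates and the Euclidean-to-orbifold comparison is no longer uniform. Here the design condition $(\ref{cond for orb})$ is decisive: near a critical point $c$ of local degree $m$, the vanishing of $|Df(z)|$ like $|z-c|^{m-1}$, the singularity of $\mu(z)$ like $|z-c|^{1/\nu(c)-1}$, and the matching behavior of $\mu(f(z))$ like $|z-c|^{m(1/\nu(f(c))-1)}$ combine---using $\nu(f(c))=m\nu(c)$---so that $\|Df(z)\|_\mu$ extends continuously across $c$ and remains bounded away from $0$ and $\infty$. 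The same cancellation, applied to the power-series of $\pi^{-1}$ near critical values, lets me control $|w|$ for $w\in W$ lifting small Euclidean disks around branch points, and thereby complete the algorithmic construction of the dyadic $\lambda>1$.
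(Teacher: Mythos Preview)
Your factorization $F=\iota\circ F^\sharp$ and the use of Schwartz--Pick to reduce the problem to bounding $\|D\iota\|_{W,\mathbb{U}}$ are exactly right, and match the paper's opening move. The gap is in the next step: you assert that $\bar W\Subset\mathbb{U}$ and then look for a radius $r<1$ with $\bar W\subset D_r$. This is false. The set $W=\pi^{-1}(V)$ is invariant under the deck group $\Gamma$ of the orbifold universal covering $\pi\colon\mathbb{U}\to(U,\nu)$; since $(U,\nu)$ is a hyperbolic orbifold, $\Gamma$ is infinite, and the $\Gamma$-orbit of any point of $W$ accumulates on $\partial\mathbb{U}$. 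Hence $W$ is never contained in a disk $D_r$ with $r<1$, and the ratio $\rho_{\mathbb{U}}/\rho_W$ cannot be bounded by any such $r$ via the argument you give. Your attempted construction of $r$ from a lower bound on $|\pi'|$ along $W$ fails for the same reason: the identity $\mu(\pi(w))\,|\pi'(w)|=2/(1-|w|^2)$ forces $|\pi'(w)|\to\infty$ along sequences in $W$ approaching $\partial\mathbb{U}$, so no uniform upper bound on $|\pi'|$ (and hence no upper bound on $|w|$) is available.

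What replaces relative compactness of $W$ is the weaker, but true, statement that every point of $W$ lies within a \emph{bounded Poincar\'e distance} $R$ of $\mathbb{U}\setminus W$. This follows by lifting the constructible bound $\d_\mu(z,U\setminus V)<R$ for $z\in V$, which one gets from Lemma~\ref{sub and euclid}: the singular density $|\mu/dz|\le C\max_j|z-a_j|^{1/\nu(a_j)-1}$ is integrable, so $\bar V$ has finite, explicitly bounded $\mu$-diameter. Given this $R$, the paper normalizes by a M\"obius map sending a nearby point of $\mathbb{U}\setminus W$ to $0$ and $w$ to $x\in(0,X]$ with $X=(e^R-1)/(e^R+1)$, and then uses Schwartz--Pick once more to dominate $\|D\iota(w)\|_{W,\mathbb{U}}$ by $\|D\iota(x)\|_{\mathbb{U}\setminus\{0\},\mathbb{U}}=2|x\log x|/(1-x^2)$, which is increasing in $x$ and strictly less than $1$. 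This gives the constructive $\lambda=1/a(X)>1$. Your local cancellation analysis near branch points is not needed here; the integrability of $\mu$ already handles those points when bounding $R$.
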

\begin{proof} For a map $g:U_1\rightarrow U_2$ between two hyperbolic Riemann surfaces denote
$$\|Dg(z)\|_{U_1,U_2}$$ the magnitude of the derivative of $g$ computed with respect to the two
Poincar\'{e} metrics. Denote $\d_\mu$ the distance in the metric $\mu$ on $U$ and $\d_\mathbb{U}$
the distance in the Poincar\'{e} metric on $\mathbb{U}$. Let $$i:W\rightarrow \mathbb{U}$$ be the
inclusion map. Then we have:
$$\|DF(w)\|_{\mathbb{U},\mathbb{U}}=\|DF(w)\|_{\mathbb{U},W}\|Di(F(w))\|_{W,\mathbb{U}}\leqslant
\|Di(F(w))\|_{W,\mathbb{U}}.$$ Using Lemma \ref{sub and euclid} it is not hard to show that we can
construct a dyadic constant $R>0$ such that $$\d_\mu(z,U\setminus V)<R\;\;\text{for any}\;\;z\in
V.$$ Let $z\in V$. Then there exists $ w \in W$ and $\zeta\in \mathbb{U}\setminus W$ such that
$$\pi( \omega )=z\;\;\text{and}\;\;\d_\mathbb{U}(\omega,\zeta)<R.$$ A suitable fractional linear
transformation sends $\zeta$ to $0$ and $\omega$ to $x>0$. Explicit calculations show that
\begin{eqnarray}\label{dx0}d=d_\mathbb{U}(x,0)=\log\frac{1+x}{1-x}, \;\;\text{so
that}\;\;x=\frac{e^d-1}{e^d+1}\leqslant
\frac{e^R-1}{e^R+1}.\end{eqnarray} Now, by Schwartz-Pick Theorem
\begin{eqnarray}\label{Diw}\|Di(\omega)\|_{W,\mathbb{U}}\leqslant\|Di(x)\|
_{\mathbb{U}\setminus\{0\},\mathbb{U}}.\end{eqnarray} The right hand
side of the last inequality can be estimated explicitly. It is equal
to $$a(x)=\frac{2|x\log x|}{1-x^2}<1.$$ Note that $a(x)$ increases
with $x$. By (\ref{dx0}) and (\ref{Diw}) the value
$\|Di(w)\|_{W,\mathbb{U}}$ is bounded from above with $a(X)<1$ for
$X=(e^R-1)/(e^R+1)$. By (\ref{Dfz}) we obtain that $\|Df(z)\|_\mu$
for $z\in V$ is bounded from below with $1/a(X)$. \end{proof}
\begin{Co} The metric $\mu$ is subhyperbolic.\end{Co}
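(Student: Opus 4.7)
The corollary is essentially a restatement of Proposition \ref{lambda} in the language of Definition \ref{subhyp met def}, so the plan is simply to unpack the definitions and check compatibility. First I would note that $U$ is indeed a neighborhood of $J_f$ by property $(3)$ of Proposition \ref{set U}, and that $\mu$ is a bona fide orbifold metric on $U$: it is by construction the pushforward under the orbifold universal covering $\pi:\mathbb{U}\to (U,\nu)$ of the Poincar\'e metric $\rho_\mathbb{U}$, whose singular set is exactly the branch locus $BP$ with weights $\nu$, and which is smooth and non-vanishing elsewhere (as quantified in Lemma \ref{sub and euclid}).

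Next, the requirement of Definition \ref{subhyp met def} is uniform strict expansion $\|Df(z)\|_\mu>\lambda>1$ on $f^{-1}(U)$ away from branch points, and a constant $\lambda>1$ with this property is produced explicitly in Proposition \ref{lambda}. The only thing to double-check is the behavior at branch points: the identity (\ref{Dfz}) is derived through the lift $F$ and presupposes that $\pi$ is a local isomorphism near $F(w)$, which fails precisely at preimages of branch points of $(U,\nu)$; but these are the very points excluded by Definition \ref{subhyp met def}, so no obstacle arises. Combining these observations, $\mu$ satisfies every clause of the definition of a subhyperbolic metric.

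In this sense there is no substantive ``main obstacle'' in the corollary itself — the work was done in Proposition \ref{lambda} (uniform expansion) and in Lemma \ref{sub and euclid} (control of the metric comparison near branch points that makes the finite Poincar\'e-distance estimate $\d_\mu(z,U\setminus V)<R$ possible). Thus the proof I would write amounts to one or two sentences citing those two results and the orbifold construction.
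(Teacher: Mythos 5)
Your proposal is correct and follows exactly the route the paper intends: the paper gives no separate proof of this Corollary, treating it as an immediate consequence of Proposition \ref{lambda} read against Definition \ref{subhyp met def} (with Lemma \ref{sub and euclid} in the background). Your added check that branch points are excluded by the definition, so the identity (\ref{Dfz}) need not hold there, is a sensible clarification but does not change the argument.
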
 Lemma \ref{sub
and euclid} and Proposition \ref{lambda} together make a
subhyperbolic analog of Proposition 3.6 from \cite{BY08}.

\subsection{The algorithm.} Denote $V_k=f^{-k}(V)$. Notice that $$J_f\Subset V_{k+1}\Subset
V_k\Subset V$$ for any $k\in \mathbb{N}$. Let us prove the following auxiliary statement.
\begin{Prop}\label{using Koebe} There is an algorithm computing two dyadic constants $K_1,K_2>0$
 such that for any
$z\in V_3\setminus J_f$ and any $k\in \mathbb{N}$ if $f^k(z)\in V_1\setminus V_3$ then one has $$
\frac{K_1}{|Df^k(z)|}\leqslant d(z,J_f)\leqslant \frac{K_2}{|Df^k(z)|}.$$ \end{Prop}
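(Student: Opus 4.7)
The plan is to apply the Koebe Distortion Theorem to a univalent branch $\psi$ of $f^{-k}$ sending $w=f^k(z)$ to $z$. In the subhyperbolic setting the critical values of $f^k$ lying in $V$ are contained in the finite set $BP=\bigcup_{j\ge 0}f^j(CJ_f)\subset J_f$, because the forward orbits of the Fatou critical points avoid $V$ by the construction of $V$ in Proposition~\ref{set U}. Hence $\psi$ is single-valued and univalent on every simply connected subdomain of $V\setminus BP$.

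For the lower bound I would first produce algorithmically a dyadic $r>0$ such that $U_r(w)\subset V$ and $U_r(w)\cap J_f=\varnothing$ for every $w\in V_1\setminus V_3$; this is possible because $J_f\Subset V_3\Subset V_1\Subset V$, so $V_1\setminus V_3$ is separated by a definite Euclidean distance from both $J_f$ and $\partial V$. On $U_r(w)$ the branch $\psi$ is univalent, and Theorem~\ref{Koebe quater} gives $\psi(U_r(w))\supset U_{r/(4|Df^k(z)|)}(z)$. Since $J_f$ is totally invariant and $U_r(w)\cap J_f=\varnothing$, the image $\psi(U_r(w))$ also misses $J_f$, hence $d(z,J_f)\ge r/(4|Df^k(z)|)$ and we may take $K_1=r/4$.

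For the upper bound the goal is to exhibit a point of $J_f$ within distance $O(1/|Df^k(z)|)$ of $z$. Let $R_0$ be a dyadic upper bound for the Euclidean diameter of $V_1$. By density of repelling periodic points in $J_f$ and finiteness of $BP$, there is $y^*\in J_f\setminus BP$ with $|y^*-w|\le R_0$. Connect $w$ to $y^*$ by a polygonal path $\gamma\subset V$ of length at most a dyadic $L_0$ that stays at distance at least $r$ from $BP$, and cover $\gamma$ by a chain of $N\le 2L_0/r$ overlapping disks $U_r(w_0),\dots,U_r(w_N)$ with $w_0=w$, $w_N=y^*$, each contained in $V\setminus BP$; $\psi$ extends univalently to each disk. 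Applying (\ref{Koebe1}) on consecutive disks yields $|\psi'(w_{i+1})|\le M|\psi'(w_i)|$ for an absolute Koebe constant $M$, so iterating gives $|\psi'(\zeta)|\le M^N|\psi'(w)|$ along all of $\gamma$. Integrating along $\gamma$ one obtains $|z-\psi(y^*)|\le L_0M^N|\psi'(w)|=K_2/|Df^k(z)|$, and since $\psi(y^*)\in f^{-k}(y^*)\subset J_f$ we conclude $d(z,J_f)\le K_2/|Df^k(z)|$ with $K_2=L_0M^N$.

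The main obstacle is the upper bound: making the constants truly algorithmic requires producing the dyadic quantities $r$, $R_0$, $L_0$, and $N$ from the data already available (the coefficients of $f$, the dyadic sets $V,V_1,V_3,B$, and dyadic approximations of $BP$ via Proposition~\ref{root finding}). The existence of $y^*$ and of the polygonal path $\gamma$ enter only in the proof of the inequality, not in the actual computation of $K_1,K_2$, so once $r,R_0,L_0,N$ and the Koebe constant $M$ are fixed, $K_1=r/4$ and $K_2=L_0M^N$ are explicit dyadic numbers.
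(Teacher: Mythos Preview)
Your lower bound is exactly the paper's argument: one chooses a dyadic $R$ with $0<R<\min\{d(\mathbb{C}\setminus V_3,J_f),\,d(V_1,\mathbb{C}\setminus V)\}$, observes that $U_R(f^k(z))$ then misses both $J_f$ and the full postcritical set, and applies Koebe One-Quarter to the inverse branch; the paper also takes $K_1=R/4$.

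For the upper bound you take a genuinely different route. The paper does not chain disks along a path. Instead it covers $\overline{V_1\setminus V_3}$ by finitely many pairs of simply connected dyadic sets $W_j\Subset U_j$ satisfying
\[
W_j\cap J_f\neq\varnothing,\qquad \bigcup_j W_j\supset V_1\setminus V_3,\qquad U_j\subset V\setminus PJ_f,
\]
uniformizes each $U_j$ by a Riemann map $\psi_j:U_j\to\mathbb{D}$, and applies Koebe \emph{once} to $\phi\circ\psi_j^{-1}:\mathbb{D}\to\mathbb{C}$. Since $\psi_j(W_j)\Subset\mathbb{D}$, bounded distortion gives $\operatorname{diam}\phi(W_j)\le r_j/|Df^k(z)|$ for a dyadic $r_j$ depending only on the fixed pair $(W_j,U_j)$; because $W_j$ meets $J_f$ and $J_f$ is backward invariant, $d(z,J_f)\le r_j/|Df^k(z)|$, and one sets $K_2=\max_j r_j$. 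The finite dyadic cover is produced explicitly from approximate positions of $PJ_f$ and of a repelling periodic orbit (via Proposition~\ref{root finding}), so $K_2$ is transparently algorithmic.

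Your Harnack-type chain argument is more elementary (no Riemann maps) and is correct in principle, but two points need tightening before it yields an \emph{algorithmic} $K_2$. First, the disks $U_r(w_i)$ must lie in $V$, so $\gamma$ must keep distance $\ge r$ from $\partial V$ as well as from $BP$; you only state $\gamma\subset V$. Second, and this is the real content, you need a single dyadic $L_0$ (hence $N$) that works for \emph{every} $w\in V_1\setminus V_3$, together with a procedure that outputs it from the dyadic data $V,V_1,V_3$ and approximations of $BP$. Existence follows from compactness, but extracting an explicit bound is exactly the work that the paper's finite cover $\{W_j\Subset U_j\}$ accomplishes directly. If you carry out that step, your argument becomes a valid alternative; as written, the computation of $L_0$ and $N$ is asserted rather than supplied.
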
 \begin{proof}
First construct a dyadic number $R$ such that $$0<R<\min\{d(\C\sm V_3,J_f),d(V_1,\C\sm V)\}.$$ Then
for any $z,k$, satisfying to the conditions of the proposition, the open disk $U_R(f^k(z))$ does
not intersect neither $J_f$ nor a forward orbit of a critical point of $f$. Let $$\phi:
U_R(f^k(z))\ra \C$$ be the branch of $(f^k)^{-1}$ such that $\phi(f^k(z))=z$. Then by Koebe Quarter
Theorem
 the image $\phi(U_R(f^k(z)))$ contains the disk or radius
${\scr\frac{1}{4}}R|D\phi(f^k(z))|.$ Since $J_f$ is invariant under $f$ it follows that
$$d(z,J_f)\ge \frac{R}{4|Df^k(z))|}.$$ Set $K_1=R/4$.

Further, denote \be\label{PJ}PJ_f=\{f^j(c):c\in CJ_f,j\ge 0\}.\ee
Notice that $PJ_f$ is finite. Recall that we can algorithmically
construct approximate positions of all critical points of $f$ which
lie in $J_f$ with any given precision. Thus, we can approximate
$PJ_f$. We can also algorithmically construct positions of some
points in $J_f$ which lie apart from $PJ_f$ with any given
precision. For instance, using the algorithm from Proposition
\ref{root finding} we can calculate approximate position of a
repelling periodic orbit. Using the above we can construct a finite
number of pairs of simply connected dyadic sets
 $W_k\Subset U_k$ such that the following is true
 \begin{figure}\centering\epsfig{file=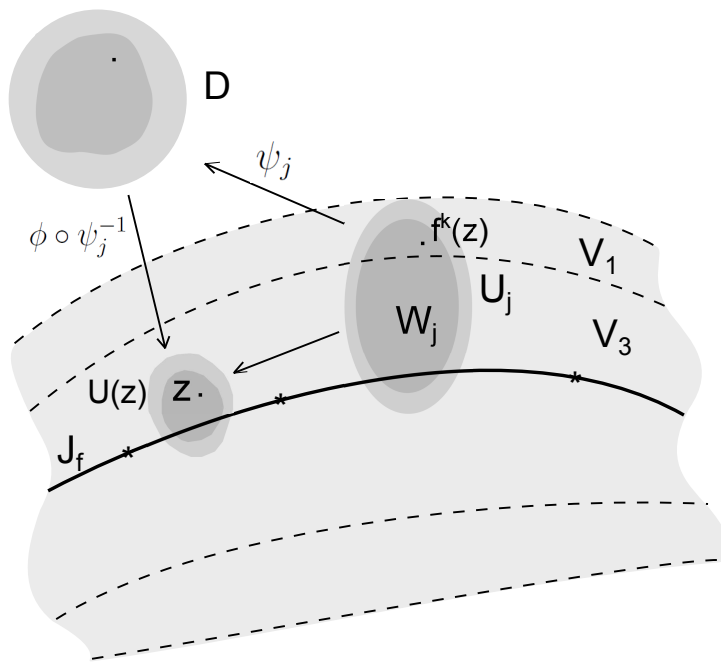,width=.70\linewidth}\caption{Illustration to Proposition
 \ref{using Koebe}.}
 \end{figure}
 \bit\item[$1)$] $W_j\cap J_f\neq \varnothing$ for any $j$;
 \item[$2)$] $\bigcup W_j\supset V_1\sm V_3$;
 \item[$3)$] $\bigcup U_j\subset V\sm PJ_f$.
 \eit
 For each $j$ fix the Riemann mapping $\psi_j:U_j\ra \mathbb{D}=\{z:|z|<1\}$.
Assume now that $z,k$ satisfy the conditions of the proposition.
Then $f^k(z)\subset W_j$ for some $j$. Let $\phi:U_j\ra U(z)$ be the
branch of $(f^k)^{-1}$ such that $\phi(f^k(z))=z$. Consider the map
$$\phi\circ\psi_j^{-1}:\mathbb{D}\ra U(z).$$ Notice that
$\psi_j(W_j)\Subset \mathbb{D}$. Applying both parts of Theorem
\ref{Koebe thm} to the map $\phi\circ\psi_j^{-1}$, we can construct
a dyadic number $r_j>0$ not depending either on $z$ or on $k$ such
that $\phi(W_j)$ is contained in the disk of radius
$r_j|D\phi(f^k(z))|$ centered at $z$. It follows that
$$d(z,J_f)\le \frac{r_j}{|Df^k(z)|}.$$ Set $K_2=\max\{r_j\}$.
\end{proof}
 As a preparatory step construct a dyadic set $W_2$ such that
$$V_3\Subset W_2\Subset V_2.$$ Compute dyadic numbers $s,\varepsilon>0$ such that \be\label{eps
s}\varepsilon<\min\{d(W_2,\C\sm V_2),d(V_3,\C\sm W_2)\},\;\; V_1\subset U_s(J_f).\ee

 Let $m=\max\{\nu(a_j)\}$. Lemma
\ref{sub and euclid} implies that for any $z\in V$ \begin{eqnarray}\label{sub eucl dist}
C^{-1}d(z,J_f)\leqslant \d_\mu(z,J_f)\leqslant mC d(z,J_f)^{1/m}. \end{eqnarray} Let $\log$ stand
for the logarithm with base 2. We will use the standard notation $[x]$ for the integer part of a
real number $x$.

 Assume that we would like to verify that a dyadic point $z$ is $2^{-n-1}$ close to $J_f$. Consider first points
 $z$ which lie outside $V_3$. Construct a dyadic set $W_3$ such that
 $$J_f\subset W_3\Subset V_3.$$ Then we can approximate the distance from a point $z\notin W_3$
 to $J_f$ by the
 distance form $z$ to $W_3$ up to a constant factor.

Now assume $z\in V_3$. Consider the following subprogram:\\$i:=1$ \\{\bf while} $i\le
[\log(mC^2s^{1/m})/\log \lambda+(n+1)/\log\lambda]+1$
{\bf do}\\
$(1)$ Compute dyadic approximations $$p_i\approx
f^i(z)=f(f^{i-1}(z))\;\;\text{and}\;\; d_i\approx
\left|Df^i(z)\right|=\left|Df^{i-1}(z)\cdot
Df(f^{i-1}(z))\right|$$ \\with precision
$\min\{2^{-n-1},\varepsilon\}$.\\ $(2)$ Check the inclusion
$p_i\in W_2$:\begin{itemize}\item[$\bullet$] if $p_i\in
W_2$, go to step $(5)$;\item[$\bullet$] if $p_i\notin W_2$,
proceed to step $(3)$;\end{itemize}
$(3)$ Check the inequality $d_i\ge K_2 2^{n+1}+1$. If true, output $0$ and exit the subprogram, otherwise\\
$(4)$ output $1$ and exit subprogram.\\
$(5)$ $i\ra i+1$\\
{\bf end while}\\
$(6)$ Output $0$ end exit.\\
{\bf end}

The subprogram runs for at most $L=[\log(mC^2s^{1/m})/\log \lambda+(n+1)/\log\lambda]+1=O(n)$
number of while-cycles each of which consist of a constant number of arithmetic operations with
precision $O(n)$ dyadic bits. Hence the running time of the subprogram can be bounded by $O(n^2\log
n\log\log n)$ using efficient multiplication. \begin{Prop}\label{subprogram} Let $f(n,z)$ be the
output of the subprogram. Then \be\left\{\ba
1,&\text{if}\;\;d(z,J_f)>2^{-n-1},\\
0,&\text{if}\;\;d(z,J_f)<K2^{-n-1},\\\text{either}\;0\;\text{or}\;1,&\text{otherwise},\ea
\right.\ee where $K=\frac{K_1}{K_2+1}$, \end{Prop}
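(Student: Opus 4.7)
My plan is to analyze the three ways the subprogram can terminate and in each case to bound $d(z,J_f)$ against the relevant threshold. Either the loop exits at step~$(3)$ with output~$0$, at step~$(4)$ with output~$1$, or at step~$(6)$ with output~$0$ after the full $L$ iterations. The first two exits share the condition $p_i\notin W_2$, while the third happens precisely when $p_i\in W_2$ for every $i\le L$.

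For the two early exits I would first localize the iterate $f^i(z)$. The precision bound $|p_i-f^i(z)|<\varepsilon$ combined with $\varepsilon<d(V_3,\C\sm W_2)$ from (\ref{eps s}) forces $f^i(z)\notin V_3$ whenever $p_i\notin W_2$; at the same time, the previous loop iteration (or the standing hypothesis $z\in V_3\subset V_2$ when $i=1$) gives $f^{i-1}(z)\in V_2$, so $f^i(z)\in f(V_2)\subset V_1$. Thus $f^i(z)\in V_1\sm V_3$ and Proposition~\ref{using Koebe} applies, yielding $K_1/|Df^i(z)|\le d(z,J_f)\le K_2/|Df^i(z)|$. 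If the test in step~$(3)$ succeeds, i.e.\ $d_i\ge K_2 2^{n+1}+1$, then $|Df^i(z)|\ge d_i-2^{-n-1}\ge K_2 2^{n+1}$, hence $d(z,J_f)\le 2^{-n-1}$, consistent with output~$0$. If the test fails, $d_i<K_2 2^{n+1}+1$ forces $|Df^i(z)|<(K_2+1)2^{n+1}$, hence $d(z,J_f)\ge K_1/((K_2+1)2^{n+1})=K\cdot 2^{-n-1}$, consistent with output~$1$.

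The terminal exit at step~$(6)$ is the delicate case and requires the subhyperbolic metric. The hypothesis $p_i\in W_2$ for all $i\le L$, combined with $\varepsilon<d(W_2,\C\sm V_2)$, forces $f^i(z)\in V_2$ for every $i\le L$. In particular $f^L(z)\in V_1\subset U_s(J_f)$, so the upper estimate in (\ref{sub eucl dist}) gives $d_\mu(f^L(z),J_f)\le mCs^{1/m}$. I would then pull this estimate back to $z$: pick $u\in J_f$ nearly realizing $d_\mu(f^L(z),J_f)$, lift the $\mu$-geodesic joining $f^L(z)$ to $u$ into the orbifold universal cover $\mathbb{U}$, and push the lifted curve through by $F^L$, where $F:\mathbb{U}\to\mathbb{U}$ is the Schwartz--Pick contraction from the construction of $\mu$. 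By Proposition~\ref{lambda} and the chain rule, $F^L$ contracts Poincar\'e distances by at least $\lambda^{-L}$, so the image projects to a curve of $\mu$-length at most $\lambda^{-L}mCs^{1/m}$ joining $z$ to some $u'\in f^{-L}(u)\cap J_f$ (using complete invariance of $J_f$). Hence $d_\mu(z,J_f)\le\lambda^{-L}mCs^{1/m}$, and the lower bound in (\ref{sub eucl dist}) gives $d(z,J_f)\le mC^2 s^{1/m}\lambda^{-L}$. The loop cap $L=[\log(mC^2s^{1/m})/\log\lambda+(n+1)/\log\lambda]+1$ was chosen precisely so that $\lambda^L\ge mC^2s^{1/m}\cdot 2^{n+1}$, which yields $d(z,J_f)\le 2^{-n-1}$. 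The only real obstacle is that the inverse branch $f^{-L}$ must be defined and uniformly contracting in $\mu$ along the geodesic from $f^L(z)$ to $u$; this is exactly what the orbifold lift accomplishes, since the branch weights at the postcritical set in $J_f$ promote the multivalued $f^{-1}$ to the single-valued holomorphic map $F$ on $\mathbb{U}$, where Schwartz--Pick applies uniformly.
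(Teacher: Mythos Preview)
Your proposal is correct and follows essentially the same approach as the paper's proof: the same case split on the three exit points, the same use of Proposition~\ref{using Koebe} together with the precision bounds (\ref{eps s}) for the early exits, and the same chain $d(z,J_f)\le C\,d_\mu(z,J_f)\le C\lambda^{-L}d_\mu(f^L(z),J_f)\le mC^2s^{1/m}\lambda^{-L}\le 2^{-n-1}$ for the step~$(6)$ exit. Your treatment is in fact slightly more careful than the paper's in two places --- you handle the $i=1$ case explicitly (where there is no $p_{i-1}$), and you spell out, via the orbifold lift $F$, why the contraction $d_\mu(z,J_f)\le\lambda^{-L}d_\mu(f^L(z),J_f)$ holds despite $f$ being non-invertible with critical values on $J_f$.
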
\begin{proof} Suppose first that the subprogram
runs the while-cycle $L$ times and exits at the step $(6)$. This means that $p_i\in W_2$ for
$i=1,\ldots,L$. In particular, $p_{L}\in W_2$. It follows that $f^{L}(z)\in V_1$. By (\ref{eps s})
and (\ref{sub eucl dist}) we obtain:\bes
d(z,J_f)\le C\d_\mu(z,J_f)\le C\lambda^{-L}\d_\mu(f^{L}(z),J_f)\le\\
\lambda^{-L}mC^2d(f^L(z),J_f)^{1/m}\le \lambda^{-L}mC^2s^{1/m}\le
2^{-n-1}.\ees Thus if $d(z,J_f)> 2^{-n-1}$, then the subprogram
exits at a step other than $(6)$.

Now assume that for some $i\le L$ the subprogram falls into the step
$(3)$. Then $$p_{i-1}\in W_2\;\; \text{and}\;\; p_i\notin W_2.$$ By
(\ref{eps s}), $f^i(z)\in V_1\sm V_3$. Now if $d_i\ge K_2
2^{n+1}+1$, then $|Df^i(z)|\ge K_22^{n+1}$. By Proposition
\ref{using Koebe}, $$d(z,J_f)\le 2^{-n-1}.$$ Otherwise,
$|Df^i(z)|\le K_22^{n+1}+2\le (K_2+1)2^{n+1}.$ In this case
Proposition \ref{using Koebe}
 implies $$d(z,J_f)\ge \frac{K_1}{K_2+1}2^{-n-1}.$$
\end{proof}
 Now, to distinguish the case when $d(z,J_f)<2^{-n-1}$ from the case when
$d(z,J_f)>2^{-n}$ we can partition each pixel of size $2^{-n}\times 2^{-n}$ into pixels of size
$(2^{-n}/K)\times (2^{-n}/K)$ and  run the subprogram for the center of each subpixel. This would
increase the running time at most by a constant factor.

\section{Maps without recurrent critical orbits and
parabolic periodic points.}\label{SectionNoRecNoPar} In this section
we will prove Theorem \ref{main}. Throughout this section let $f$
stand for a rational map without parabolic periodic points such that
$\Omega_f$ does not intersect the set of critical points of
$f$.\subsection{Preparatory steps and nonuniform
information.}\label{SubsecPrep} As in the case of subhyperbolic map,
without loss of generality we will assume that $\infty \notin J_f$.
For any $c\in CJ_f$ put $$N_0(c)= \max\{n:f^n(c)\in CJ_f\}+1.$$ Put
$N_0=\max\limits_{c\in CJ_f} N_0(c)$.
 Denote $$\widetilde{C}=\{f^n(c):c\in CJ_f,0\leqslant n<N_0(c)\},\;\;
 M=\overline{\{f^n(c):c\in CJ_f,n\geqslant N_0(c)\}}.$$
 By our assumptions, there are no either
recurrent critical orbits or parabolic periodic points of $f$, the
set $M$ is invariant and does not contain critical points of $f$.
Thus, the set $M$ satisfies the conditions of Ma\~{n}\'{e}'s Theorem
$(\ref{Mane thm})$. The following result is classical (see
\cite{M}).
\begin{Th}\label{Zi and He} Let $g$ be a rational map. Then the
boundary of each cycle of Siegel disks and each cycle of Herman
rings belongs to $\overline{PJ_g}$ (see (\ref{PJ})).\end{Th}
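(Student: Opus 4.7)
The plan is to prove this by contradiction, following the classical Fatou strategy: assume some $z_0 \in \partial\Delta$ avoids $\overline{PJ_g}$, use the disjointness from the postcritical set to obtain a normal family of univalent inverse branches of $g^n$ near $z_0$, and combine this with the irrational rotation dynamics inside $\Delta$ to force $z_0$ into the Fatou set, a contradiction.

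Let $\Delta$ be one of the periodic components in the cycle of Siegel disks (or Herman rings) of period $p$. Since $\partial\Delta \subset J_g$ and every critical orbit contained in the Fatou set stays in the Fatou set, the hypothesis $z_0 \in \partial\Delta$ with $z_0 \notin \overline{PJ_g}$ is equivalent to $z_0$ being bounded away from the full closed postcritical set. First I would choose a round disk $U \ni z_0$ disjoint from that closed postcritical set. Then no forward image of a critical point meets $U$, so every component of every preimage $g^{-n}(U)$ is mapped univalently onto $U$, producing well-defined univalent inverse branches $h_n : U \to \hat{\mathbb{C}}$ of $g^n$. By Montel's theorem applied to this family (which omits, for instance, three points in a repelling periodic orbit disjoint from the grand orbit of $U$), the collection $\{h_n\}$ is normal on $U$.

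Next I would invoke the rotational dynamics: $g^p|_\Delta$ is conformally conjugate to an irrational rotation by some angle $\alpha$, so there is a subsequence $n_k$ with $n_k\alpha \to 0 \pmod{1}$, giving $g^{n_kp}|_\Delta \to \mathrm{id}_\Delta$ locally uniformly. For each $k$ pick the inverse branch $h_{n_kp}$ of $g^{n_kp}$ on $U$ which on $U \cap \Delta$ agrees with the inverse of the automorphism $g^{n_kp}|_\Delta$. Normality yields, after further extraction, $h_{n_kp} \to h$ locally uniformly on $U$; on $U \cap \Delta$ the limit is the identity, so by the identity principle $h \equiv \mathrm{id}$ on the whole component of $U$ meeting $\Delta$, and hence on all of $U$.

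The main obstacle is converting this limiting identity into a contradiction with $z_0 \in J_g$. The idea is that $h_{n_kp} \to \mathrm{id}$ on $U$, together with the Koebe distortion theorem, forces the images $h_{n_kp}(U')$ for a fixed disk $U' \Subset U$ around $z_0$ to contain a common neighborhood $U''$ of $z_0$ for all large $k$. On $U''$ the iterate $g^{n_kp}$ is then univalent with image in $U'$ and derivative tending to $1$ at $z_0$, so the orbits $\{g^n(z)\}_{n \ge 0}$ for $z \in U''$ are trapped in a fixed compact subset of $\hat{\mathbb{C}}$ (controlled on the $\Delta$ side by the invariance of $\Delta$ under $g$, and elsewhere by the ``almost-identity'' return at times $n_kp$ combined with continuity of $g$). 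Montel's theorem then shows $\{g^n\}$ is a normal family on $U''$, placing $z_0$ in the Fatou set and contradicting $z_0 \in \partial\Delta \subset J_g$. The Herman-ring case is treated identically, handling each of the two boundary circles of the cycle in turn; applying the argument to every component of the cycle completes the proof.
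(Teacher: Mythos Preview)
The paper does not supply a proof of this statement; it is quoted as a classical fact with a reference to Milnor and serves only as input to the subsequent lemma. Your argument is exactly the standard Fatou proof found there (normal family of univalent inverse branches on a disk avoiding the postcritical set, combined with the irrational rotation returning to the identity along a subsequence), and the overall strategy is correct.

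One point deserves care. The paper's $PJ_g$ records only the forward orbits of critical points lying in $J_g$, whereas the classical theorem is stated for the full postcritical set. Your reduction (``every critical orbit contained in the Fatou set stays in the Fatou set'') gives $PF_g\subset F_g$ but not $\overline{PF_g}\subset F_g$: the closure of a Fatou critical orbit can meet $J_g$ at a parabolic cycle. In Section~\ref{SectionNoRecNoPar}, where the theorem is actually invoked, $f$ has no parabolic points, so $\overline{PF_g}\subset F_g$ and your reduction goes through; for a completely general $g$ one either adds a line excluding parabolic periodic points from $\partial\Delta$, or simply proves the full-postcritical-set version (which is what Milnor does) and observes that this already suffices for the application. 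A second minor point: for the normality of $\{h_n\}$, rather than hunting for a repelling cycle disjoint from the grand orbit of $U$, it is cleaner to note that each $h_n(U)$ avoids the forward-invariant closed postcritical set itself (since $g^n(h_n(U))\subset U$ misses it), and this set has at least three points whenever a rotation domain is present.
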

\begin{Lm} There are no either Siegel disk cycles or Herman ring
cycles in the Fatou set of $f$.\end{Lm}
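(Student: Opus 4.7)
Suppose for contradiction that $f$ has a Siegel disk or Herman ring cycle in its Fatou set, and let $\Gamma$ denote the union of the boundary components of this cycle. Then $\Gamma \subset J_f$ is forward $f$-invariant and every connected component of $\Gamma$ is a non-trivial continuum.

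I first locate $\Gamma$ inside the Ma\~{n}\'{e} set $M$. By Theorem \ref{Zi and He}, $\Gamma \subset \overline{PJ_f}$, and from the definitions of this subsection $\overline{PJ_f} = \widetilde{C} \cup M$ with $\widetilde{C}$ finite. Each component of $\Gamma$ is uncountable and has no isolated points, so the closed set $\Gamma \cap M$ is cofinite and dense in $\Gamma$; hence $\Gamma \subset M$. Next I check that $M$ satisfies the hypotheses of Theorem \ref{Mane thm}: it is compact and forward $f$-invariant by construction; $M \subset J_f$ together with the choice of $N_0$ in Section \ref{SubsecPrep} keeps critical points of $f$ out of $M$; parabolic periodic points are excluded by the hypothesis of Theorem \ref{main}; and the recurrence condition $M \cap \omega(c) = \varnothing$ is vacuous because the assumption that $\Omega_f$ contains no critical points forbids recurrent critical points altogether. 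Theorem \ref{Mane thm} therefore gives $N \in \mathbb{N}$ with $|Df^N(z)| \geq \lambda > 1$ uniformly on $M$, and by the chain rule and forward invariance one has $|Df^{Nk}(z)| \geq \lambda^k$ for every $z \in \Gamma \subset M$ and every $k \geq 1$.

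Let $p$ be the period of the cycle, $\Lambda$ a connected component of $\Gamma$, and $g := f^{Np}$. The map $g$ preserves the Fatou component adjacent to $\Lambda$, on which it is a conformal automorphism conjugate to an irrational rotation; by Carath\'{e}odory's theorem this extends to a homeomorphism of the closure, so $g|_\Lambda$ is a self-homeomorphism of $\Lambda$. Combining this with the uniform bound $|Dg(z)| \geq \lambda^p > 1$ on $\Lambda$ and with Koebe distortion, I verify that $g|_\Lambda$ is a topologically expansive homeomorphism of the continuum $\Lambda$: for any distinct $z, w \in \Lambda$ the iterates $g^n(z), g^n(w)$ separate to a uniform distance $\delta > 0$ after finitely many steps. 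By the classical theorem going back to Utz that a compact connected metric space admitting an expansive homeomorphism is a singleton, $\Lambda$ must reduce to a single point, contradicting $\Lambda$ being a non-trivial continuum.

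\emph{Main obstacle.} The principal technical point is verifying that $g|_\Lambda$ is a homeomorphism. For Herman rings this is automatic (Jordan-curve boundary). For Siegel disks whose boundary $\partial\Delta$ fails to be locally connected, Carath\'{e}odory's extension is available only at the level of prime ends; one then either carries out the argument in the prime-end compactification (to which the expansive-homeomorphism theorem still applies) or uses the absence of recurrent critical orbits in Theorem \ref{main} to upgrade the boundary regularity so that Carath\'{e}odory's theorem applies directly. Once $g|_\Lambda$ is a self-homeomorphism, the Ma\~{n}\'{e} expansion together with the theorem on expansive homeomorphisms yields the contradiction at once.
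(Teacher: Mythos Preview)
Your argument rests on a theorem that is not true. The assertion ``a compact connected metric space admitting an expansive homeomorphism is a singleton'' is false: the dyadic solenoid with its shift map, Plykin attractors, and the torus with an Anosov automorphism are all nondegenerate continua carrying expansive homeomorphisms. What \emph{is} classical (Jakobsen--Utz, Bryant) is that the arc and the circle admit no expansive homeomorphism, but you have not reduced $\Lambda$ to either of these. In fact your reduction attempts fail on both fronts: boundaries of Herman rings are \emph{not} known to be Jordan curves in general, so ``automatic'' is unjustified; and for Siegel disks your prime-end alternative collapses, because on the prime-end circle $g$ acts as an irrational rotation, which is an isometry and hence not expansive---the derivative bound lives on $\Lambda\subset\mathbb{C}$, not on the abstract prime-end compactification. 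So even granting the Ma\~n\'e expansion you correctly set up, the concluding step does not go through.

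The paper's proof avoids all of this by never working on $\partial\Delta$ itself. Once Ma\~n\'e gives $|Df^N|>1$ on $\partial\Delta$, continuity extends this to a neighborhood, and the contradiction is obtained \emph{inside} $\Delta$: the conjugacy to a rotation provides $f$-invariant analytic closed curves arbitrarily close to $\partial\Delta$, and a self-map of a smooth closed curve with derivative everywhere $>1$ strictly increases arclength, contradicting invariance. This sidesteps Carath\'eodory, prime ends, boundary regularity, and expansive-homeomorphism theory entirely; you should replace your final step with this length argument on the invariant circles in the rotation domain.
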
\begin{proof} Assume for
simplicity that there is a Siegel disk $\Delta$. By replacing $f$
with an iterate if necessary, we can assume that $f(\Delta)=\Delta$.
Then the boundary $\partial \Delta$ of the Siegel disk is forward
invariant under $f$ and belongs to $\overline{PJ_f}$. It follows
that $\partial \Delta$ does not contain any critical point of $f$.
Thus, $\partial \Delta$ satisfies the condition of Ma\~{n}\'{e}'s
Theorem $(\ref{Mane thm})$. Therefore, there exists $N$ such that
$f^N$ is expanding on a neighborhood of $\partial \Delta$. This is
impossible since $f$ is conjugated to a rotation inside $\Delta$.
The other case can be treated similarly.\end{proof}

To compute the Julia set the algorithm will use the following {\bf
non-uniform information}:\\${\bf N1.}$ $N_F(f),\;N_J(f)$ and degrees
$m_1,\ldots,m_{N_j(f)}$ of the critical points of $f$ which lie in
$J_f$;\\${\bf N2.}$ $N_0,\;N\in\mn$, dyadic numbers
$\delta,\delta'>0,q>1$ and a dyadic set $U\Supset M$ such that
$$U_{\delta/2}(M)\supset f^N(U),\;\;U\supset
U_{\delta'}(M),\;\;U_\delta(U)\cap
\widetilde{C}=\varnothing$$ and
 for any $z\in U_\delta(U)$ one has $$|Df^N(z)|>q.$$
In this section we will prove the following theorem.\begin{Th}\label{main'} Let $f$ be a rational
map such that $f$ has
 no parabolic periodic points and $\Omega_f$ does not
 contain any critical points. There exists a Turing
Machine which, given an oracle for the coefficients of the map $f$
and the non-uniform information $(N1,N2)$, computes $J_f$ in a
polynomial time.\end{Th}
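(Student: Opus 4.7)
The plan is to follow the architecture of the subhyperbolic algorithm of Section 2, combining two expansion regimes: Ma\~n\'e-type uniform expansion $|Df^N(z)|>q$ on $U_\delta(U)\supset M$ (supplied directly by the non-uniform information $\mathbf{N2}$) and a Koebe distortion analysis at the finitely many points of $\widetilde C$. First I would carry out preparatory steps mirroring those of Section 2: from $N_F(f)$ and Proposition \ref{attr orbits finding} produce a dyadic neighborhood of the attracting basins, then by back-iteration obtain nested dyadic sets $V\Supset V_1\Supset V_2\Supset V_3\Supset J_f$ and a dyadic $W_2$ with $V_3\Subset W_2\Subset V_2$; from the coefficient oracle, $N_0$, and Proposition \ref{root finding} compute dyadic approximations of the finite set $\widetilde C$, and fix a dyadic $\rho>0$ so that the disks $U_\rho(a)$, $a\in\widetilde C$, are pairwise disjoint and disjoint from $U_\delta(U)$.

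The key analytic step replaces Proposition \ref{using Koebe}. Given an orbit segment $z,f(z),\ldots,f^k(z)$ with $z\in V_3$ and $f^k(z)\in V_1\setminus V_3$, I would split it into \emph{univalent passages} along which the iterate avoids $\bigcup_{a\in\widetilde C}U_\rho(a)$ and \emph{critical passages} along which it visits some $U_\rho(a)$. On each univalent passage, standard Koebe distortion on a univalent inverse branch of $f^{-j}$ yields $d(\cdot,J_f)\asymp 1/|Df^j(\cdot)|$; across a critical passage I would use the local normal form $f(w)=f(a)+w^{m_a}$ near $a$ to absorb the branching into an $m_a$-th-root factor, producing an upper bound of the form $d(z,J_f)\le C|Df^k(z)|^{-1/m^\star}$ with $m^\star=\max_j m_j$. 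The lower bound again comes from Koebe Quarter on univalent inverse branches. Combined with the Ma\~n\'e bound $|Df^{iN}(z)|\ge q^i$ along the forward $f^N$-orbit inside $U$, this gives the required exponential growth of $|Df^k(z)|$ whenever the orbit remains trapped in $V_2$ for many iterations.

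The algorithm is then the natural adaptation of the subprogram of Section 2: iterate $f$ on a pixel center $z$ for at most $L=O(n)$ steps with precision $O(n)$, check at each step whether $f^i(z)\in W_2$, and at the first exit compare $|Df^i(z)|$ to a threshold of the order $2^{(n+1)m^\star}$ to declare the pixel value; the standard pixel refinement trick then narrows the ambiguous zone. Each iteration costs $O(n^2\log n\log\log n)$ via efficient multiplication, so the total running time is polynomial in $n$.

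The hard part will be ensuring that the total number of critical passages in any orbit segment of length $L=O(n)$ that remains inside $V_2$ is uniformly bounded, so that the $m^\star$-exponent correction enters only $O(1)$ times and does not degrade the distance estimate to something subpolynomial. The key is to choose $\rho$ small enough that a single visit to $U_\rho(a)$ is followed within $N_0$ iterates by entry into the forward-$f^N$-invariant neighborhood $U$ of $M$, after which the orbit under $f^N$ stays in $U$ permanently and --- after a further auxiliary enlargement of $U$ to absorb the $N-1$ intermediate iterates of $f$ --- also avoids $\bigcup_a U_\rho(a)$. Such a $\rho$ exists by the nonrecurrence of critical orbits and the absence of parabolic periodic points, and constructing it algorithmically from the oracle and the non-uniform information $\mathbf{N1},\mathbf{N2}$ is the main new technical ingredient beyond the subhyperbolic case.
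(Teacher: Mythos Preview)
Your proposal has a genuine gap at the point you yourself flag as the hard part. The claim that after a visit to $U_\rho(a)$ the $f^N$-orbit enters $U$ and ``stays in $U$ permanently'' is false. The neighborhood $U$ cannot be forward-$f^N$-invariant: since $|Df^N|>q>1$ on $U_\delta(U)$ and $M\subset J_f$, any $z\in U\setminus J_f$ satisfies $d(f^{kN}(z),J_f)\ge q^k d(z,J_f)\to\infty$, which forces exit from the bounded set $U$ in finitely many steps. Once the orbit leaves $U$ it is free to wander back near $\widetilde C$; for a point $2^{-n}$-close to $J_f$ the number of such returns is not bounded by an absolute constant but can be of order $n$. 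Consequently your iterate bound $L=O(n)$ and your ``$O(1)$ critical passages'' both fail, and with them the claimed distance estimate $d(z,J_f)\le C|Df^k(z)|^{-1/m^\star}$ degrades beyond usefulness.

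The paper's argument supplies exactly the ingredients you are missing. First, it puts the Poincar\'e metric on $W=V\setminus(\widetilde C\cup M)$; since $f(\widetilde C\cup M)\subsetneq\widetilde C\cup M$, the map $f$ is strictly expanding in this metric, uniformly so outside $U_\gamma(\widetilde C)\cup U$ (Lemma~\ref{LemmartR}). Second, Proposition~\ref{critical} shows that one full passage---entering $U_\gamma(\widetilde C)$, then $U$, then exiting $U$---produces net Euclidean expansion $d(\,\cdot\,,J_f)\mapsto q\,d(\,\cdot\,,J_f)$; this is the replacement for your ``at most one critical passage''. Third, Lemma~\ref{separated} gives an a~priori lower bound $d(f^k(z),J_f)\ge S\,d(z,J_f)^K$ valid for \emph{every} $k$, which bounds by $O(n)$ the time each excursion spends inside $U$. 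These three pieces combine in Theorem~\ref{PropositionPolytimeLeaving} to give a \emph{quadratic} escape-time bound $p(n)=O(n^2)$, not $O(n)$. Finally, the Koebe estimate at exit (Proposition~\ref{using Koebe nonrec}) needs no $m^\star$-root correction: the exit annulus $f(V)\setminus f^{-1}(V)$ is disjoint from the entire postcritical set (which lies in $J_f$ or in the attracting basins), so the inverse branch of $f^k$ from a disk there is univalent regardless of how many times the forward orbit passed near $\widetilde C$, and one obtains the clean two-sided bound $d(z,J_f)\asymp |Df^k(z)|^{-1}$.
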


 Now we prove several auxiliary lemmas.
 \begin{Lm}\label{expansion} For any $z\in U$
 and any $z_1,z_2\in U_\delta(z)$ one has $$\left|f^N(z_1)-f^N(z_2)\right|\geqslant q|z_1-z_2|.$$
\end{Lm}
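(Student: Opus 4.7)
The plan is to use the integral representation
$$f^N(z_1)-f^N(z_2)=(z_1-z_2)\int_0^1 Df^N(\gamma(t))\,dt,$$
where $\gamma(t)=z_2+t(z_1-z_2)$ parametrizes the straight segment from $z_2$ to $z_1$. Since $U_\delta(z)$ is convex and contained in $U_\delta(U)$, this segment lies entirely in $U_\delta(U)$, where by hypothesis $|Df^N|>q$ at every point. The problem then reduces to showing that phase cancellation in this integral is controlled enough that its modulus is still at least $q$.

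I would proceed in two steps. First, by continuity of $Df^N$ together with the strict inequality $|Df^N|>q$ on $U_\delta(U)$, a compactness argument on a slightly shrunk subneighborhood yields $|Df^N|\ge q^*>q$ for some $q^*$. Second, I would control the angular variation of $Df^N$ across $U_\delta(z)$. The condition $U_\delta(U)\cap\widetilde C=\varnothing$, combined with the definition of $\widetilde C$ as $\{f^j(c):c\in CJ_f,\,0\le j<N_0(c)\}$, ensures that no iterate $f^j(w)$ with $w\in U_\delta(U)$ and $0\le j<N$ is a critical point of $f$; hence $f^N$ is a local biholomorphism on $U_\delta(U)$. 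Applying the Koebe distortion theorem (Theorem~\ref{Koebe thm}) to a univalent restriction of $f^N$ on an appropriate disk containing $U_\delta(z)$, I would obtain bounds on both $|Df^N(w)/Df^N(z)|$ and $\arg\bigl(Df^N(w)/Df^N(z)\bigr)$ for $w\in U_\delta(z)$; in particular, the angular variation is bounded by some $\alpha<\arccos(q/q^*)$, which is implicit in the choice of the non-uniform data.

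Combining the two steps, with $\theta=\arg Df^N(z)$, one has $\re\bigl(e^{-i\theta}Df^N(\gamma(t))\bigr)\ge q^*\cos\alpha$ for every $t\in[0,1]$, so
$$\left|\int_0^1 Df^N(\gamma(t))\,dt\right|\ge \re\!\Bigl(e^{-i\theta}\int_0^1 Df^N(\gamma(t))\,dt\Bigr)\ge q^*\cos\alpha\ge q.$$
Multiplying by $|z_1-z_2|$ gives the claimed inequality.

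The main obstacle is precisely this phase-control step: the pointwise derivative bound $|Df^N|>q$ does not by itself imply the integrated expansion, because rotation of $\arg Df^N$ along $\gamma$ could in principle cause cancellation. The resolution combines the slack in the strict inequality (yielding $q^*>q$ on compact subsets) with Koebe-type distortion bounds on $U_\delta(z)$, which themselves rely on the univalence of $f^N$ on a slightly larger disk ensured by the absence of critical iterates in $U_\delta(U)$.
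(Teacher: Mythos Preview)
The paper's proof is a single line invoking a ``Lagrange formula'' $g(z_1)-g(z_2)=(z_1-z_2)\,Dg(\lambda z_1+(1-\lambda)z_2)$ for some $\lambda\in[0,1]$ and then taking absolute values. As you correctly sensed, this mean-value identity is false for holomorphic maps (e.g.\ $g=\exp$, $z_1=0$, $z_2=2\pi i$), so the paper's argument as written does not establish the lemma; the pointwise bound $|Df^N|>q$ alone does not force $|f^N(z_1)-f^N(z_2)|\ge q|z_1-z_2|$ without some control on the argument of $Df^N$ along the segment. Your diagnosis of the phase-cancellation obstacle is exactly the point the paper glosses over.

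Your route via the integral representation together with an angular-variation bound is the natural repair, and it is genuinely different from (and more careful than) what the paper does. Two remarks on the details. First, your appeal to $U_\delta(U)\cap\widetilde C=\varnothing$ to conclude that $f^N$ has no critical points on $U_\delta(U)$ is misdirected: that condition says forward orbits of critical points avoid $U_\delta(U)$, not that orbits starting in $U_\delta(U)$ avoid critical points. The conclusion you want, however, is immediate from $|Df^N|>q>0$ on $U_\delta(U)$. Second, Koebe distortion requires univalence of $f^N$ on a disk strictly larger than $U_\delta(z)$, and the inequality $\alpha<\arccos(q/q^*)$ is not implied by the non-uniform data as stated in the paper. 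Both issues evaporate once $\delta$ is taken small enough relative to the modulus of continuity of $\log Df^N$ on a neighborhood of $M$; so the clean fix is precisely what you gesture at in your last paragraph---fold the required smallness of $\delta$ (or, equivalently, the conclusion of the lemma itself) into the non-uniform information $(N2)$. In short: you have identified a real gap in the paper's one-line proof and supplied the right remedy.
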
 Notice that, in particular, the restriction of $f^N$ on $U_{\delta}(z)$ is one to one for
any $z\in U$. \begin{proof} Using Lagrange formula
\be\label{Lagrange}g(z_1)-g(z_2)=(z_1-z_2)Dg(\lambda z_1+(1-\lambda)z_2),\;\;\lambda\in[0,1],\ee
for $g(z)=f^N(z)$ we obtain $$\left|f^N(z_1)-f^N(z_2)\right|\geqslant q|z_1-z_2|.$$ \end{proof}
 For simplicity set $F=f^N$. Lemma \ref{expansion} implies that
 \be\label{EquationExpansionNearM}d(F(z),J_f)\ge qd(z,J_f)\text{ for each }z\in
 U_\delta(U).\ee\begin{Lm}\label{iter} For any $z\in U$ and $k\leqslant\min\{j\in
\mathbb{N}:F^j(z)\notin U\},k\in \mn,$ one has
\begin{eqnarray}\label{ineq1}\begin{aligned}
\frac{1}{2}\left|DF^k(z)\right|d(z,J_f)\leqslant
d(F^k(z),J_f)\leqslant \frac{9}{2}
\left|DF^k(z)\right|d(z,J_f),\\
d(F^k(z),M)\leqslant \frac{9}{2}\left|DF^k(z)\right|d(z,M).\end{aligned} \end{eqnarray}
 \end{Lm}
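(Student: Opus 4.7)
The lemma is a Koebe-type distortion estimate for the iterate $F^k$ along an orbit that stays in $U$. My plan has three parts.

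First, I would construct a univalent branch $\phi$ of $F^{-k}$ defined on a disk $U_{r_0}(F^k(z))$ and sending $F^k(z)\mapsto z$, where $r_0>0$ depends on $\delta, q, U$ but \emph{not} on $k$. Lemma~\ref{expansion} shows that $F$ is univalent on $U_\delta(\zeta)$ for every $\zeta\in U$, and the hypotheses $U_\delta(U)\cap\widetilde{C}=\varnothing$ and ``$M$ has no critical points'' together ensure that $F$ has no critical values on the orbit $z,F(z),\ldots,F^k(z)$. I would then pull back disks one $F$-step at a time: by the Koebe $1/4$ theorem~\ref{Koebe quater} applied to $F|_{U_\delta(F^j(z))}$, the inverse branch of $F$ at $F^{j+1}(z)$ is defined on a disk of radius at least $q\delta/4$. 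The uniform expansion $|DF|>q>1$ on $U_\delta(U)$ then gives a telescoping estimate showing that if $r_0$ is chosen small enough (independently of $k$), the pullback of $U_{r_0}(F^k(z))$ through all $k$ steps remains inside $U_\delta(F^j(z))$ for each $j$, so the induction persists and $\phi$ is univalent on $U_{r_0}(F^k(z))$ with $|D\phi(F^k(z))|=|DF^k(z)|^{-1}$.

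Second, I would apply the Koebe distortion theorem~\ref{Koebe thm}, specifically the growth estimate~(\ref{Koebe2}), to $\phi$ on $U_{\alpha r_0}(F^k(z))$ for a suitably chosen $\alpha\in(0,1)$. This gives, for every $w$ with $|w-F^k(z)|\le \alpha r_0$, the two-sided comparison
\[
\frac{|w-F^k(z)|}{(1+\alpha)^2\,|DF^k(z)|}\ \le\ |\phi(w)-z|\ \le\ \frac{|w-F^k(z)|}{(1-\alpha)^2\,|DF^k(z)|}.
\]
Choosing $\alpha$ small enough that $(1+\alpha)^{-2}\ge 2/9$ and $(1-\alpha)^{-2}\le 2$ (with a little slack) produces the constants $\tfrac12$ and $\tfrac92$ required by~(\ref{ineq1}).

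Third, I would apply these inequalities to realizing points. For the lower bound $d(F^k(z),J_f)\ge \tfrac12|DF^k(z)|d(z,J_f)$, take $w_*\in J_f$ with $|w_*-F^k(z)|=d(F^k(z),J_f)$; by $F$-invariance $\phi(w_*)\in J_f$, so Stage~2 gives $d(z,J_f)\le|\phi(w_*)-z|\le \tfrac92|DF^k(z)|^{-1}d(F^k(z),J_f)$, which rearranges to the claim. For the upper bound $d(F^k(z),J_f)\le \tfrac92|DF^k(z)|d(z,J_f)$, take $w\in J_f$ realizing $d(z,J_f)$ and argue that $w$ lies in $\phi(U_{\alpha r_0}(F^k(z)))$ (which contains $U_{\alpha r_0/(4|DF^k(z)|)}(z)$ by Koebe $1/4$ applied to $\phi$); then $F^k(w)\in J_f$ and the Stage~2 bound, read from the $\phi$-side, gives $|F^k(w)-F^k(z)|\le\tfrac92|DF^k(z)||w-z|$. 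The bound for $d(F^k(z),M)$ follows identically, using $F$-invariance of $M$.

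The main technical obstacle is Stage~1: obtaining a $k$-\emph{uniform} radius of univalence for $\phi$. Without the uniform expansion and the absence of critical values along the orbit, the pullback disks could shrink or run into critical values, and the Koebe constants would deteriorate with $k$. The boundary regime, where a realizing point lies outside the Koebe disk, is harmless: there $d(z,J_f)$ (or $d(z,M)$) is already bounded below by a constant multiple of $|DF^k(z)|^{-1}$, and the lemma's inequalities hold by absorbing the diameter of $U$ into the chosen slack in the Koebe constants.
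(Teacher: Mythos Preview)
Your overall strategy---apply Koebe distortion to a univalent branch $h=\phi$ of $F^{-k}$ sending $F^k(z)\mapsto z$---is exactly the paper's. But two observations you do not make are what actually deliver the constants $\tfrac12$ and $\tfrac92$; without them your ``boundary regime'' patch and your push-forward argument for the upper bound do not close.

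First, you need not take $r_0$ small, and no telescoping via Koebe~$1/4$ is required. Lemma~\ref{expansion} already says $|F(\zeta_1)-F(\zeta_2)|\ge q|\zeta_1-\zeta_2|$ on each $U_\delta(z_j)$ with $z_j=F^j(z)\in U$; an elementary open-mapping/boundary argument then gives $F(U_\delta(z_j))\supset U_{q\delta}(z_{j+1})\supset U_\delta(z_{j+1})$. Iterating, the inverse branch $h$ is univalent on the \emph{full} disk $U_\delta(z_k)$, with image contained in $U_{\delta/q}(z)$. So one may simply take $r_0=\delta$.

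Second---and this is the point that eliminates your boundary case entirely---the non-uniform data $N2$ include $F(U)\subset U_{\delta/2}(M)$. Since $F^{k-1}(z)\in U$, this forces $r:=d(z_k,J_f)\le d(z_k,M)\le\delta/2$. Hence the nearest point of $J_f$ (respectively $M$) to $z_k$ always lies in $U_{\delta/2}(z_k)$, and Koebe on $h:U_\delta(z_k)\to\mathbb C$ applies with ratio $\alpha=r/\delta\le\tfrac12$, giving
\[
U_{\frac{2}{9}r|h'(z_k)|}(z)\ \subset\ h\bigl(U_r(z_k)\bigr)\ \subset\ U_{2r|h'(z_k)|}(z).
\]
Now the upper bound in~(\ref{ineq1}) follows by pulling back the \emph{empty} disk: $U_r(z_k)\cap J_f=\varnothing$ and $J_f$ is completely invariant, so $h(U_r(z_k))\cap J_f=\varnothing$, whence $d(z,J_f)\ge\tfrac{2}{9}r|h'(z_k)|$. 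This replaces your push-forward of a realizing point $w\in J_f$ near $z$, which as written is circular: you assume $w\in\phi(U_{\alpha r_0}(z_k))$, i.e.\ $d(z,J_f)\le \alpha r_0/(4|DF^k(z)|)$, but with $r_0$ chosen small and $\alpha<1$ this is stronger than what you are trying to prove and is not supplied by anything prior. The lower bound in~(\ref{ineq1}) comes from the outer inclusion applied to the realizing point $w_*\in J_f$ at $z_k$ (which lies in $U_r(z_k)$), exactly as you wrote. The $M$-inequality is the same argument with $M$ in place of $J_f$, using only forward invariance of $M$; this is why no lower bound for $d(F^k(z),M)$ is claimed.

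In short: set $r_0=\delta$ and use $d(z_k,J_f)\le\delta/2$ from $N2$. Then there is no boundary regime, the Koebe ratio is at most $\tfrac12$, and the constants of the lemma fall out directly.
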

\begin{proof}
\begin{figure}\epsfig{file=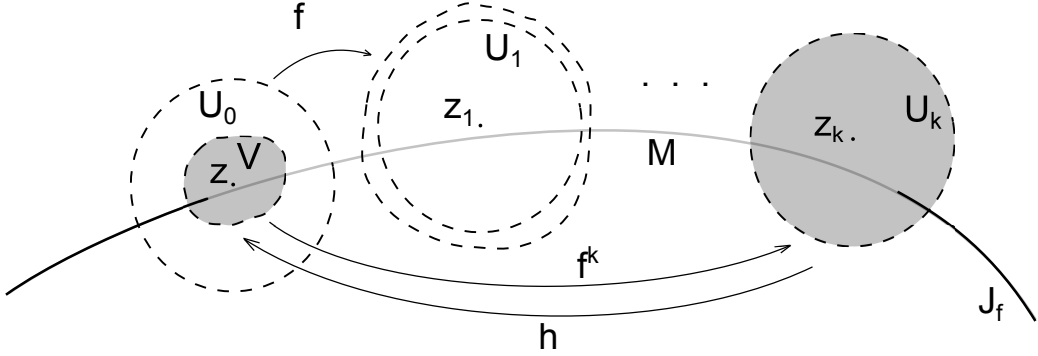,width=\linewidth}
\caption{Illustration to Lemma \ref{iter}}\end{figure} Let $z,k$
satisfy the conditions of Lemma \ref{iter}. Put
$$z_j=F^j(z), U_j=U_\delta(z_j),j=0,1,\ldots, k.$$ By Lemma
\ref{expansion}, $F$ is univalent on $U_j$ for each $j=0,1,\ldots,
k-1$ and $$F(U_j)\supset U_{j+1}.$$ It follows that there exists
$V\subset U_0$, such that
$$F^k(V)=U_k$$ and $F^k$ is univalent on $V$. Lemma
\ref{expansion} implies that $$V\subset
U_{\delta/q}(z_0).$$ Let $h:U_k\rightarrow V$ such that
$h\circ F^k$ is identical on $V$. Put $$r=d(z_k,J_f).$$
Then $r\leqslant\frac{\delta}{2}$.  By Koebe Distortion
Theorem (see (\ref{Koebe})), one has:\begin{eqnarray}
 U_{\frac{2}{9}r|h'(z_k)|}(z)
 \subset h\left(U_{r}(z_k)\right)
\subset U_{2r|h'(z_k)|}(z)  .\end{eqnarray} Since $J_f$ is invariant
under $f$, it follows that
\begin{eqnarray}\label{ineq2}\frac{2}{9}r|h'(z_k)|\leqslant
d(z,J_f)\leqslant 2r|h'(z_k)|.\end{eqnarray} Since
$Dh(z_k)=\frac{1}{DF^k(z)}$, $(\ref{ineq2})$ is equivalent to the
first part of $(\ref{ineq1})$. Similarly, one can prove the second
part of $(\ref{ineq1})$. Observe that we can not prove a lower bound
for $d(F^k(z),M)$ in this way since $M$ is not necessarily invariant
under $F^{-1}$.\end{proof} Till the end of this section in each of
the preceding statements we assume that the Turing Machine has an
access to the non-uniform information $(N1,N2)$ and an oracle for
the coefficients of the map $f$.\begin{Lm}\label{crit dist} There
exists a TM which computes dyadic numbers ${\varepsilon_1}>0,K>0$,
 such that for any critical point $c$ of $f$ which lie in $J_f$ one has
  $$d(f(z),J_f)\geqslant K d(z, J_f)|z-c|^{m-1}$$ for any $z\in U_{\varepsilon_1}(c)$,
  where $m$ is the
  degree of $f$ at $c$.
\end{Lm}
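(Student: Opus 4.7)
The approach is to reduce to a local normal form. Since $c\in J_f$ is a critical point of $f$ of degree $m$, we have $f(z)-f(c)=(z-c)^m h(z)$ with $h$ analytic and $h(c)\neq 0$, so choosing a branch of $h^{1/m}$ near $c$ gives a univalent function $\psi(z)=(z-c)h(z)^{1/m}$ satisfying $\psi(c)=0$, $\psi'(c)\neq 0$ and $f(z)=f(c)+\psi(z)^m$. In the coordinate $w=\psi(z)$ the map $f$ becomes $w\mapsto f(c)+w^m$, and backward invariance of $J_f$ forces $E:=\psi(J_f\cap U_{\varepsilon_1}(c))=\{u:u^m\in J_f-f(c)\}$ locally.

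First I would compute a dyadic $\varepsilon_1>0$ small enough that $\psi$ is univalent on $U_{\varepsilon_1}(c)$ with bounded distortion and $|\psi(z)|\geqslant (|\psi'(c)|/2)|z-c|$ there; the relevant bounds on $|\psi'(c)|$ and the radius of univalence are computable from the Taylor coefficients of $f$ at an approximation of $c$ produced by Proposition \ref{root finding}. Since $f(c)\in J_f$, one has $d(f(z),J_f)\leqslant |f(z)-f(c)|=|w|^m$, so any $y\in J_f$ realizing $d(f(z),J_f)$ satisfies $|y-f(c)|\leqslant 2|w|^m$. Writing $y=f(c)+v$, all $m$-th roots $u_0,\ldots,u_{m-1}$ of $v$ satisfy $|u_k|=|v|^{1/m}\leqslant 2^{1/m}|w|\leqslant 2|w|$, so they lie in $\psi(U_{\varepsilon_1}(c))$ for $\varepsilon_1$ small, and backward invariance of $J_f$ forces each $u_k\in E$.

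The core estimate is then the factorization
\[
d(f(z),J_f)=|w^m-v|=\prod_{k=0}^{m-1}|w-u_k|.
\]
Labeling $u_0$ as the root closest to $w$, Koebe distortion applied to $\psi^{-1}$ yields $|w-u_0|\geqslant d(w,E)\geqslant C_1\,d(z,J_f)$. For the remaining $m-1$ factors I would argue by two geometric cases: if $|v|^{1/m}\leqslant |w|/2$ then $|w-u_k|\geqslant |w|/2$ for every $k$ by the triangle inequality; if $|v|^{1/m}\in(|w|/2,2|w|]$ then the $u_k$ lie on a circle of radius comparable to $|w|$ at angular separation $2\pi/m$, so $u_0$ is the unique root within angular distance $\pi/m$ of $w$, and an elementary planar chord calculation gives $|w-u_k|\geqslant |w|\sin(\pi/m)$ for $k\neq 0$. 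Setting $c_m:=\min(1/2,\sin(\pi/m))>0$ and combining,
\[
d(f(z),J_f)\geqslant C_1\,c_m^{m-1}\,d(z,J_f)\,|w|^{m-1}\geqslant K\,d(z,J_f)\,|z-c|^{m-1}
\]
with $K:=C_1 c_m^{m-1}(|\psi'(c)|/2)^{m-1}$, which is dyadic-computable.

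The main non-routine step is the case analysis producing the uniform lower bound on the $m-1$ ``non-closest'' root factors; once that geometric lemma is in place, the translation back to the original coordinates via Koebe and the computability of $\varepsilon_1$ and $K$ from constructive approximations of $\psi'(c)$ (hence of $f^{(m)}(c)/m!$) reduce to routine bookkeeping.
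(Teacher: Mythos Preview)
Your approach is essentially identical to the paper's: pass to the local normal form $f(z)=f(c)+\psi(z)^m$, factor $d(f(z),J_f)=|w^m-v|=\prod_k|w-u_k|$, bound the closest-root factor below by $\text{const}\cdot d(z,J_f)$ using backward invariance of $J_f$ and bi-Lipschitz control of $\psi$, and bound the remaining $m-1$ factors by $\text{const}\cdot|w|\geqslant\text{const}\cdot|z-c|$ via the angular-separation argument. Two minor points: first, you need $\psi$ univalent on a disk $U_\gamma(c)$ strictly larger than $U_{\varepsilon_1}(c)$, since $|u_k|\leqslant 2^{1/m}|w|$ does \emph{not} force $u_k\in\psi(U_{\varepsilon_1}(c))$ for $z$ near the boundary (making $\varepsilon_1$ small does not help, as both sides scale the same way)---this is exactly why the paper introduces $\gamma$ and then $\varepsilon_1<\gamma$; second, your case split is unnecessary, because the bound $|w-u_k|\geqslant|w|\sin(\pi/m)$ for every non-closest root follows directly from $(\rho-|w|\cos(\pi/m))^2\geqslant 0$ with $\rho=|v|^{1/m}$, which is the ``simple geometric observation'' the paper invokes.
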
 \begin{proof} By Proposition \ref{root finding} we
can find approximate positions of all critical points of $f$. Notice
that the orbit of every critical point of $f$ which lie in the Fatou
set converges to an attracting cycle. Using the ideas from the proof
of Proposition \ref{attr orbits finding} we can distinguish the
critical points which belong to the Fatou set from the critical
points which belong to the Julia set. Let $c\in J_f$ be a critical
point of degree $m$. Then we can approximate the coefficient $a_m$
at $(z-c)^m$ in the Taylor expansion of $f$ at $c$ and compute
$\gamma>0$, such that
$$\left|\frac{f(z)-f(c)}{a_m(z-c)^m}-1\right|<1$$ for any
$z\in U_\gamma(c)$. Let $\omega$ be any $m$-th root of
$a_m$. Then there exists a unique holomorphic map $\psi$
from $U_\gamma(c)$ to a neighborhood of the origin, such
that
$$f(z)=\psi(z)^m+f(c)\;\;\text{on}\;\;U_\gamma(c)\;\;\text{and}\;\;\psi'(c)=\omega.$$
Make $\gamma$ smaller if necessary in order to have
$$|\psi'(z)|\geqslant |\omega|/2\text{ for any }z\in
U_\gamma(c).$$ The image $f(U_\gamma(c))$ contains a disk
$U_\alpha(f(c))$, where $\alpha>0$ can be algorithmically
constructed. Let $0<{\varepsilon_1}<\gamma$, such that
$$f(U_{\varepsilon_1}(c))\subset U_{\alpha/2}(f(c)).$$ Take
$z\in U_{\varepsilon_1}(c)$. Observe that
$$d(f(z),J_f)\leqslant d(f(z),f(c))\leqslant \alpha/2.$$
Let $\zeta_0\in J_f$, such that
$$d(f(z),J_f)=|f(z)-\zeta_0|.$$ Then $\zeta_0\in
U_\alpha(f(c))$. There exists $z_0\in J_f\cap U_\gamma(c)$,
such that $f(z_0)=\zeta_0$. One has
$$f(z)-f(z_0)=\psi(z)^m-\psi(z_0)^m=\prod\limits_{j=0}^{m-1}\left(\psi(z)-e^{2\pi
i j/m}\psi(z_0)\right).$$ Let $0\leqslant j_0\leqslant m-1$
be the number for which $e^{2\pi i j_0/m}\psi(z_0)$ takes
the closest value to $\psi(z)$. From simple geometric
observations it follows that $$\left|\psi(z)-e^{2\pi i
j/m}\psi(z_0)\right|\geqslant
|\psi(z)|\sin(\pi/m)\geqslant\frac{2|\psi(z)|}{m}\geqslant
\frac{|\omega|}{m}|z-c|$$ for any $j\neq j_0$. Since
$\psi^{-1}\left(e^{2\pi i j_0/m}\psi(z_0)\right)\in J_f$,
it follows that \begin{eqnarray*}d(f(z),J_f)\geqslant
\left(\frac{|\omega||z-c|}{m}\right)^{m-1}\left|\psi(z)-e^{2\pi
i j_0/m}\psi(z_0)\right|\geqslant K
 |z-c|^{m-1}d(z,J_f),\end{eqnarray*} where $K=m^{1-m}|\omega|^m/2$.
\end{proof} Using the definition of the dyadic set $U$ and
Proposition \ref{root finding} one can prove the following
statement.\begin{Lm}\label{eps2} There exists a TM which
computes a dyadic number $\varepsilon_2>0$ such
that\\
\be\label{eps2 formula}\overline{U_{\varepsilon_2}\left(\widetilde{C}\right)} \cap
\overline{f^N(U)}=\varnothing\text{ and }
 f^{N_0}\left(\overline{U_{\varepsilon_2}\left(\widetilde{C}\right)} \right)\subset
U.\ee\end{Lm}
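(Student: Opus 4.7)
The plan is to produce $\varepsilon_2$ by combining the geometric data built into the non-uniform information $(N2)$ with a straightforward computable modulus-of-continuity estimate for $f^{N_0}$.

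First consider the separation condition. The non-uniform information $(N2)$ gives $U_\delta(U)\cap \widetilde{C}=\varnothing$, so every point of $\widetilde{C}$ lies at distance at least $\delta$ from $U$. Since $M\subset U$ we have $f^N(U)\subset U_{\delta/2}(M)\subset U_{\delta/2}(U)$, hence $\overline{f^N(U)}\subset \overline{U_{\delta/2}(U)}$, which is separated from $\widetilde{C}$ by at least $\delta/2$. Consequently any dyadic $\varepsilon_2<\delta/4$ automatically gives $\overline{U_{\varepsilon_2}(\widetilde{C})}\cap\overline{f^N(U)}=\varnothing$.

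Now consider the containment condition. Since $\widetilde{C}=\{f^n(c):c\in CJ_f,\,0\le n<N_0(c)\}$ is finite, and for any $p=f^n(c)\in\widetilde{C}$ one has $f^{N_0}(p)=f^{n+N_0}(c)$ with $n+N_0\ge N_0(c)$, the point $f^{N_0}(p)$ belongs to the orbit tail defining $M$, so $f^{N_0}(\widetilde{C})\subset M\subset U$. Using the oracle for the coefficients of $f$, Proposition \ref{root finding}, and the given bound $N_0$, the algorithm can compute dyadic approximations of the points of $\widetilde{C}$ and of $f^{N_0}(\widetilde{C})$ to any desired precision. Because $U$ is a finite union of dyadic disks containing the finite set $f^{N_0}(\widetilde{C})$, one can then compute a dyadic number $d_0>0$ with $U_{2d_0}(f^{N_0}(\widetilde{C}))\subset U$. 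Finally, working on a fixed precompact dyadic neighborhood of $\widetilde{C}$ disjoint from the poles of $f$, one computes from the coefficients of $f$ a dyadic upper bound $L$ for $|Df^{N_0}|$ on that neighborhood (standard interval arithmetic with $N_0$ compositions). Any dyadic $\varepsilon_2<\min\{\delta/4,\,d_0/L\}$ then satisfies both requirements.

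The only mildly delicate point is that some elements of $\widetilde{C}$ may be critical points of $f^{N_0}$, so the linear bound via $L$ is only a worst-case estimate; however, since the modulus of continuity of $f^{N_0}$ near a critical point is actually Hölder with exponent $\le 1$, the linear bound $|f^{N_0}(z)-f^{N_0}(p)|\le L|z-p|$ on a fixed precompact set is all one needs, so this is a computational nuisance rather than a real obstruction. All steps reduce to finitely many arithmetic and iteration operations whose count depends only on the non-uniform information, so $\varepsilon_2$ is produced effectively.
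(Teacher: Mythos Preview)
Your argument is correct and follows exactly the route the paper indicates (the paper gives only the one-line hint ``using the definition of the dyadic set $U$ and Proposition~\ref{root finding}''). You simply make explicit how the separation bound comes straight from the non-uniform data $U_\delta(U)\cap\widetilde{C}=\varnothing$ together with $f^N(U)\subset U_{\delta/2}(M)$, and how the containment bound comes from $f^{N_0}(\widetilde{C})\subset M\subset U_{\delta'}(M)\subset U$ combined with a computable Lipschitz constant for $f^{N_0}$; nothing further is needed.
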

\begin{Prop}\label{critical}
  There exists a TM
  which computes dyadic number
  $0<\gamma<\min\{\varepsilon_1,\varepsilon_2\}$ such that the following is true. For any
  $z\in U_\gamma\left(\widetilde{C}\right)$ and
$k=\min\{j\in \mathbb{N}:f^{Nj+N_0}(z)\notin U\}$
 one has $$d(f^{kN+N_0}(z),J_f)\geqslant qd(z,J_f).$$\end{Prop}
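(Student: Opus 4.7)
The plan is to split the orbit of $z$ into two segments: first the $N_0$ iterates carrying $z$ to $z':=f^{N_0}(z)\in U$ (the inclusion follows from Lemma \ref{eps2} once $\gamma<\varepsilon_2$), and then the $k$ iterates of $F=f^N$ inside the uniformly expanding region $U$, with companion orbit $y':=f^{N_0}(y)\in M$ of a point $y\in\widetilde C$ satisfying $|z-y|<\gamma$. From Lemma \ref{iter} one has $d(F^k(z'),J_f)\ge\tfrac12|DF^k(z')|\,d(z',J_f)$, so the proof reduces to a lower bound on $d(z',J_f)$ in terms of $d(z,J_f)$ and a lower bound on $|DF^k(z')|$ in terms of $|z'-y'|$.

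For the first bound I would iterate Lemma \ref{crit dist} along the orbit $z,f(z),\ldots,z'$, picking up a factor $|f^j(z)-f^j(y)|^{m_j-1}$ at every index $j$ where $f^j(y)$ is a critical point of order $m_j$, and a uniform positive factor at the remaining indices. Denoting by $\mu$ the product of all critical orders along the chain $y,f(y),\ldots,f^{N_0-1}(y)$, the local form $f^{N_0}(z)=f^{N_0}(y)+G((z-y)^\mu)$ for some biholomorphism $G$ yields the two-sided comparison $|f^j(z)-f^j(y)|\asymp|z-y|^{\mu_j}$, where $\mu_j$ is the partial product of critical orders up to step $j$. The telescoping identity $\sum_{j\in S}\mu_j(m_j-1)=\mu_{N_0}-\mu_0=\mu-1$ then collapses the critical factors into a single power, giving
\begin{equation*}
d(z',J_f)\ge C_1|z-y|^{\mu-1}d(z,J_f),\qquad |z'-y'|\le C_2|z-y|^\mu.
\end{equation*}

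For the second bound I would analyze the exit event at time $k$. Because $M$ is compact, $F$-invariant, and disjoint from the critical set of $F$ (this uses $N2$ together with the hypothesis that $\Omega_f$ contains no critical points), there is a uniform radius $R_0>0$ on which, for every $w\in M$, the branch of $F^{-1}$ through $w$ is univalent on $U_{R_0}(w)$. Pulling back iteratively along $y',F(y'),\ldots,F^k(y')$ and applying the Koebe distortion theorem together with the uniform expansion $|DF|\ge q$, the orbit $F^j(z')$ stays inside $U$ as long as $|DF^j(y')|\,|z'-y'|$ does not exceed a constant multiple of $\min(R_0,\delta')$. Taking $j=k$ yields $|DF^k(y')|\gtrsim\delta'/|z'-y'|$, and one further application of Koebe distortion transfers this to $|DF^k(z')|\gtrsim\delta'/|z'-y'|$. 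Chaining the estimates,
\begin{equation*}
d(F^k(z'),J_f)\gtrsim\frac{\delta'}{|z'-y'|}\cdot|z-y|^{\mu-1}d(z,J_f)\gtrsim\frac{\delta'}{|z-y|}d(z,J_f)\ge\frac{\delta'}{\gamma}d(z,J_f),
\end{equation*}
so any dyadic $\gamma$ with $\delta'/\gamma$ sufficiently large compared to $q$ suffices.

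The main obstacle is the uniform distortion control required in the third paragraph: one must show that the inverse branches of $F^k$ along the orbit of $y'$ in $M$ remain univalent on disks of a fixed size independent of $k$, so that a single Koebe constant absorbs the entire pull-back. This relies essentially on the compactness of $M$, its positive distance from the finite critical set of $F$, and the uniform expansion $|DF|\ge q$; extra care is also needed to verify that each of $R_0$, $C_1$, $C_2$ is algorithmically computable from the non-uniform information, so that the resulting $\gamma$ is dyadic and effectively obtainable.
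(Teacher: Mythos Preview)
Your approach is correct and matches the paper's proof in all essential respects: the same decomposition into the first $N_0$ steps followed by $k$ iterates of $F=f^N$, the same use of Lemma~\ref{crit dist} to obtain $d(z',J_f)\gtrsim|z-y|^{\mu-1}d(z,J_f)$ and $|z'-y'|\lesssim|z-y|^{\mu}$, and the same final telescoping to a factor $\delta'/\gamma$.

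The one place where you work harder than necessary is the derivative lower bound in your third paragraph. The estimate $|DF^k(z')|\gtrsim \delta'/|z'-y'|$ that you set out to prove via inverse-branch pullbacks along the companion orbit in $M$ is already contained in Lemma~\ref{iter}: its second inequality gives $d(F^k(z'),M)\le\tfrac{9}{2}|DF^k(z')|\,d(z',M)$, and since $F^k(z')\notin U\supset U_{\delta'}(M)$ while $d(z',M)\le|z'-y'|$, this yields $|DF^k(z')|\ge\tfrac{2\delta'}{9|z'-y'|}$ immediately. In the paper's write-up this is packaged by dividing the two inequalities of Lemma~\ref{iter} to get $\dfrac{d(F^k(w),J_f)}{d(w,J_f)}\ge\dfrac{d(F^k(w),M)}{9\,d(w,M)}$ and then combining with the first-segment bounds. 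So the ``main obstacle'' you flag---uniform distortion control for the inverse branches of $F^k$ along $M$---has in fact already been handled once and for all in Lemma~\ref{iter}, and no further Koebe argument is needed here.
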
 \begin{proof}
 First put $\gamma=\min\{\varepsilon_1,\varepsilon_2\}$. Let
$z\in U_{\gamma}\left(\widetilde{C}\right)\setminus J_f$. Set
$$w=f^{N_0}(z), k=\min\{j\in \mathbb{N}:F^j(w)\notin U\},$$  where $F=f^N$.
Using Lemma \ref{iter}, we obtain: \begin{eqnarray}\label{ineq3}
 \frac{d(F^k(w),J_f)}{d(w,J_f)}\geqslant
 \frac{d(F^k(w),M)}{9d(w,M)}.
\end{eqnarray} Let $c\in \widetilde{C}$. Put $h=f^{N_0}$.
   Let $m_c$ be the degree of $h$ at $c\in \widetilde{C}$.
   Then we can algorithmically construct a dyadic number $L_c>0$, such that
$$|h(z)-h(c)|\leqslant L_c|z-c|^{m_c}$$ for any $z\in U_\gamma(c)$. Then
\be\label{ineq5}d(h(z),M)\le d(h(z),h(c))\leqslant
L_c|z-c|^{m_c}.\ee Lemma \ref{crit dist} implies that we can
algorithmically construct dyadic numbers $\alpha_c>0, K_c>0$ such
that
   \begin{eqnarray}\label{ineq6}
 d(h(z),J_f)\geqslant K_c d(z,J_f)|z-c|^{m_c-1}
\end{eqnarray} for any $z\in U_{\alpha_c}(c)$.
Combining $(\ref{ineq5})$ with $(\ref{ineq6})$, we get \begin{eqnarray}\label{ineq4}
 \frac{d(h(z),J_f)}{d(z,J_f)}\geqslant K_cL_c^{-1}\frac{d(h(z),M)}{|z-c|}
\end{eqnarray} for any $z\in U_{\min\{\alpha_c,\gamma\}}(c)\setminus J_f$.

Take $\gamma$ such that $\gamma\leqslant \min\left\{ \frac{\delta'
K_cL_c^{-1}}{9q},\alpha_c\right\}$ for any $c\in \widetilde{C}$.
Recall that in $(\ref{ineq3})$ $F$ stands for $f^{N}$ and $w$ stands
for $f^{N_0}(z)$,
 in $(\ref{ineq4})$ $h$ stands for $f^{N_0}$.
Now (\ref{ineq4}) implies that
\begin{eqnarray}\label{ineq7}
 \frac{d(w,J_f)}{d(z,J_f)}\geqslant 9q\frac{d(w,M)}{\delta'}
\end{eqnarray} for any $z\in U_\gamma\left(\widetilde{C}\right)$.
 Combining $(\ref{ineq3})$ and
$(\ref{ineq7})$, taking into account that $U\supset
U_{\delta'}(M)$, we obtain the statement of Lemma
\ref{critical}. \end{proof} A direct analog of the
Proposition \ref{set U} holds for the maps $f$, which
satisfy the conditions of Theorem \ref{main}.
\begin{Prop}\label{set U nonrec} Let $f$ be a rational map
such that $f$ does not have any parabolic periodic points
and $\Omega_f$ does not contain any critical points of $f$.
There exists a TM which given an oracle for the
coefficients of the map $f$ and the non-uniform information
outputs a planar domain $V\in \mathcal{C}$ such
that:\begin{itemize} \item[(1)] $V\Subset f(V)$, \item[(2)]
$f^2(V)\cap PF_f=\varnothing$, \item[(3)] $J_f\Subset V$.
\end{itemize} \end{Prop}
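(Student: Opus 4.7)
The proof adapts that of Proposition~\ref{set U}, inserting one extra preimage of $W$ to produce the $f^2$ (rather than $f$) in condition~(2).

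First, observe that the algorithm of Proposition~\ref{attr orbits finding} still applies in our setting: under the hypotheses of Theorem~\ref{main}, $f$ has no parabolic periodic points, and by the lemma established above (Ma\~{n}\'{e}'s theorem applied to the boundary of any rotation domain), $f$ has neither Siegel disks nor Herman rings. Every Fatou component is therefore the basin of an attracting or superattracting cycle, and each orbit of a point in $CF_f$ converges to such a cycle; this convergence is the only dynamical fact used in Proposition~\ref{attr orbits finding}. Thus we obtain a dyadic set $B$ with $f(B)\Subset B$ containing every attracting cycle, whose nested preimages exhaust the Fatou set.

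Next, exactly as in Proposition~\ref{set U}, construct a dyadic set $W$ sandwiched between consecutive preimages $f^{1-m}(B)\Subset W\Subset f^{-m}(B)$ for $m$ large enough that $CF_f\subset f^{-1}(W)$. This is effectively realisable by approximating the critical points via Proposition~\ref{root finding} and iterating each of them until it first enters $B$, then taking $m$ to exceed the maximum of these entry times. Using $W\subset f^{-1}(W)$ and $f(f^{-1}(W))=W$, a short induction yields $f^j(CF_f)\subset f^{-1}(W)$ for every $j\ge 0$. Consequently $PF_f\subset f^{-1}(W)$, and therefore $f^{-2}(PF_f)\subset f^{-3}(W)$.

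Finally, algorithmically construct a dyadic set $\widetilde W$ with $f^{-3}(W)\Subset \widetilde W\Subset f^{-4}(W)$ and set $V=\mathbb{C}\setminus \widetilde W$. Condition~(3), $J_f\Subset V$, holds because $\widetilde W$ lies entirely in the basin of attraction, which is disjoint from $J_f$. Condition~(2), $f^2(V)\cap PF_f=\varnothing$, is equivalent to $f^{-2}(PF_f)\subset \widetilde W$ and follows from the inclusion derived above. Condition~(1), $V\Subset f(V)$, is equivalent to $\hat{\mathbb{C}}\setminus f(V)\Subset \widetilde W$; for any $y\in \hat{\mathbb{C}}\setminus f(V)$, every preimage $z$ of $y$ lies in $\widetilde W\subset f^{-4}(W)$, forcing $f^3(y)=f^4(z)\in W$ and hence $y\in f^{-3}(W)\Subset \widetilde W$. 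The only real technical concern, as in Proposition~\ref{set U}, is certifying algorithmically that $m$ is large enough; this is handled by the same iterative testing procedure used there.
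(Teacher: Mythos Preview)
Your proof is correct and follows exactly the route the paper intends: the paper does not give a proof of Proposition~\ref{set U nonrec} at all, merely stating that ``a direct analog of Proposition~\ref{set U} holds,'' and your argument supplies precisely that analog, with the one necessary modification of taking an extra preimage layer ($f^{-3}(W)\Subset \widetilde W\Subset f^{-4}(W)$ rather than $f^{-2}(W)$ to $f^{-3}(W)$) to accommodate the $f^2$ in condition~(2). Your observation that Proposition~\ref{attr orbits finding} carries over because the only dynamical input is convergence of Fatou critical orbits to attracting cycles---guaranteed here by the absence of parabolic points and of rotation domains---is the one point that genuinely needs checking beyond the subhyperbolic case, and you handle it correctly.
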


  Denote $$\Theta=
CJ_f\cup\bigcup\limits_{c\in
CJ_f}\overline{\mathcal{O}(c)}=\widetilde{C}\cup M,\;\;
W=V\setminus\Theta.$$ If $CJ_f=\varnothing$ then $f$ is
hyperbolic. Since for hyperbolic maps poly-time
computability of the Julia set is well known (see
\cite{Bra04} and \cite{Ret05}), we will assume that
$CJ_f\neq \varnothing$. Then $f(\Theta)$ is strictly
smaller than $\Theta$. It follows that $f(W)$ is strictly
larger than $W$. Let $\|\cdot\|_ W$ stands for the
hyperbolic norm associated with $ W$.
 Then $$\|Df(z)\|_W>1\text{ for every }z\in  W.$$ Let $d_ W$ be the
metric on $ W$ induced by $\|\cdot\|_ W$. It follows that
\be\label{hyper expansion}
 d_ W(f(z),J_f)\geqslant d_ W(z,J_f)\text{ for every }z\in f^{-1}(W)\cap W.\ee  Notice that
$$\inf\limits_{z \in J_f\setminus
\left(U_\gamma\left(\widetilde{C}\right) \right)
}|f'(z)|>0.$$
 One can construct a dyadic subset $V_1$, such that
$$f^{-1}(V)\Subset V_1\Subset V.$$ Denote $W_1=V_1\setminus
\Theta$. \begin{Lm}\label{LemmartR} One can algorithmically
construct dyadic numbers $r\in(0,1], t>1,R>0,\epsilon>0$,
 such that
\begin{itemize} \item[(1)] $d_ W(f(z),J_f)\geqslant t\,d_
W(z,J_f)$ for any $z\in W\setminus \left(U_\gamma\left(
\widetilde{C}\right)\cup U\right) $; \item[(2)]
$d(f(z),J_f)\geqslant r d(z,J_f)$ for any $z\in W\setminus
U_\gamma\left( \widetilde{C}\right)$; \item[(3)]
$D_R(0)\supset W\supset W_1\supset U_\epsilon(J_f)$.
\end{itemize}\end{Lm}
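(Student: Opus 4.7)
My plan is to construct the four constants $R,\epsilon,r,t$ in turn, reducing each to explicit Euclidean or Koebe-type estimates so that everything is algorithmic in data already produced.

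For~(3), $R$ is the known diameter of the dyadic set $V$. For $\epsilon$: $J_f$ is fully forward invariant and $J_f\subset V$, so $J_f\subset f^{-1}(V)\Subset V_1$; thus $\epsilon:=\tfrac{1}{2}d(f^{-1}(V),\mathbb{C}\setminus V_1)>0$ is computable and yields $U_\epsilon(J_f)\subset V_1$, whence $U_\epsilon(J_f)\cap W\subset W_1$ (the inclusion in the lemma being read with the removal of $\Theta$ implicit). For~(2), the key observation is that $W\setminus U_\gamma(\widetilde C)$ is uniformly bounded away from every critical point of $f$: the Julia critical points lie in $\widetilde C\cup M$, the set $M$ contains no critical points by the non-recurrence hypothesis, and $V\cap CF_f=\varnothing$ because $V\subset f^2(V)$ while $f^2(V)\cap PF_f=\varnothing$ by Proposition~\ref{set U nonrec}. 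Consequently $|f'|$ admits a computable positive infimum $c$ on this set, and for $z\in W\setminus U_\gamma(\widetilde C)$ with $w\in J_f$ nearest to $z$, a Koebe distortion estimate on a univalent disk around $z$ whose radius is comparable to $d(z,\text{crit})$ converts $c$ into a Euclidean Lipschitz lower bound $|f(z)-f(w)|\ge r|z-w|$.

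Part~(1) is the main technical step. Because $f^{-1}(V)\Subset V_1$, the set of $z\in W\setminus(U_\gamma(\widetilde C)\cup U)$ for which $f(z)\in W$ sits inside $W_1\setminus(U_\gamma(\widetilde C)\cup U)$, which is compactly contained in $W$: it lies in $V_1\Subset V$ (away from $\partial V$) and outside $U_\gamma(\widetilde C)\cup U\supset\Theta$ (away from $\Theta$). Points with $f(z)\notin W$ contribute nothing if one takes $d_W(\cdot,J_f)=+\infty$ off $W$. Hence it suffices to bound $\|Df(z)\|_W$ from below by some $t>1$ uniformly on this compact set. The pointwise bound is already in the paper: the strict inclusion $W\subsetneq f(W)$ (strict because $f(\Theta)=\Theta\setminus CJ_f\subsetneq\Theta$), together with the Schwarz--Pick comparison of the hyperbolic norms of $W$ and $f(W)$, yields $\|Df(z)\|_W>1$ on $W$, and uniformity on the compact set then gives $t$. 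To make $t$ effective I would mimic Proposition~\ref{lambda}: pass to a universal cover of $f(W)$, factor $f$ through it, and bound the inclusion derivative using the explicit function $a(x)=2|x\log x|/(1-x^2)<1$ evaluated on an upper estimate of the Poincar\'e distance in $f(W)$ from the lifted image of $z$ to the complement of the preimage of $W$; this Poincar\'e distance is in turn controlled via Koebe from the Euclidean separation provided by $\gamma$ and $U$.

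The main obstacle is the effectivity of the hyperbolic estimate in~(1). The Poincar\'e metrics of $W$ and $f(W)$ are defined through uniformization and are not directly computable, so each step must be converted to an explicit Euclidean-scale or Koebe-type bound, with constants extracted from the dyadic descriptions of $V$ and $V_1$, the number $\gamma$ from Proposition~\ref{critical}, and the non-uniform input~(N2). The delicate point is turning the Euclidean separation of the compact set from $\Theta$ into a uniform upper bound on the Poincar\'e distance in $f(W)$ from $f(z)$ to $f(W)\setminus W$, via a quantitative Koebe estimate analogous to Lemma~\ref{sub and euclid}.
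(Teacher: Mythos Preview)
The paper does not give a proof of this lemma; it is simply asserted, with only the preceding observation $\inf_{z\in J_f\setminus U_\gamma(\widetilde C)}|f'(z)|>0$ and the subsequent remark that $d$ and $d_W$ are equivalent on compacta of $W$ serving as implicit hints. Your outline follows those hints, and your treatment of~(3) and of the effectivity issues (mimicking Proposition~\ref{lambda}) is in line with what the paper evidently intends.

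There is, however, a genuine direction error in your argument for~(2). You choose $w\in J_f$ nearest to~$z$ and deduce $|f(z)-f(w)|\ge r|z-w|$; but since $f(w)\in J_f$ this only gives $d(f(z),J_f)\le|f(z)-f(w)|$, i.e.\ an \emph{upper} bound for $d(f(z),J_f)$, not the required lower bound. The correct argument goes through the inverse branch: pick $\zeta\in J_f$ nearest to $f(z)$, use your lower bound $|f'(z)|\ge c$ together with Koebe one-quarter to produce a univalent branch $g$ of $f^{-1}$ on a disk of definite radius about $f(z)$ with $g(f(z))=z$, and apply Koebe distortion to~$g$ to obtain $|g(\zeta)-z|\le (C/c)\,|\zeta-f(z)|$. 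Full invariance of $J_f$ gives $g(\zeta)\in J_f$, whence $d(z,J_f)\le(C/c)\,d(f(z),J_f)$, which is~(2) with $r=c/C$.

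Your reduction of~(1) to a pointwise bound $\|Df(z)\|_W\ge t$ on the compact set $K=W_1\setminus(U_\gamma(\widetilde C)\cup U)$ also hides a gap. The inequality $d_W(f(z),J_f)\ge t\,d_W(z,J_f)$ does not follow from $\|Df(z)\|_W\ge t$ at the single point~$z$: one must lift a $d_W$-geodesic from $f(z)$ to $J_f$ through the covering $f:f^{-1}(W)\to W$ and integrate the contraction $\rho_W/\rho_{f^{-1}(W)}$ along the \emph{entire} lifted path, which need not remain in~$K$ (its endpoint lies in $J_f$ and may approach $\Theta$, where the ratio $\rho_{f^{-1}(W)}/\rho_W$ can tend to~$1$). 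To close this you need a bound on $\|Df\|_W$ valid along the whole lift, not just at its starting point; the Proposition~\ref{lambda} template you invoke achieves exactly this in the subhyperbolic setting because the expansion is established on all of $f^{-1}(U)$, not merely on a compact subset. You should either show the lifted geodesic stays in a region where the uniform bound holds, or argue directly with the hyperbolic distance from $z$ to $W\setminus f^{-1}(W)$ as in that proposition.
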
 Notice that the Euclidian metric $d$
and the hyperbolic metric $d_ W$ are equivalent on any
compact subset of $ W$. The following lemma can be proven
similarly to Lemma \ref{sub and euclid}.
  \begin{Lm}\label{LemmaMetrics} One can algorithmically construct a dyadic constant $C>0$ such that
  \begin{eqnarray}\label{equi norms}C^{-1}d(z,J_f)\leqslant d_W(z,J_f)\leqslant C d(z,J_f)\end{eqnarray}
   for any $z\in W_1\setminus
\left(U_\gamma\left( \widetilde{C}\right)\cup U \right)
$.\end{Lm}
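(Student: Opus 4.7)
The plan is to reduce the distance-to-$J_f$ comparison to a pointwise Poincar\'e-density sandwich on the set $K = W_1\setminus(U_\gamma(\widetilde{C})\cup U)$, in direct parallel with Lemma \ref{sub and euclid}, and then translate from densities to distances by a short case analysis.

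First I would check that $K$ sits at a uniform, algorithmically computable Euclidean distance from $\partial W=\partial V\cup\Theta$. Since $W_1\Subset V_1\Subset V$ are explicit dyadic sets, one can compute a dyadic $\eta_1>0$ with $d(W_1,\partial V)\ge \eta_1$; by the very definition of $K$ one has $d(K,\widetilde{C})\ge \gamma$, and since $U\supset U_{\delta'}(M)$ one has $d(K,M)\ge \delta'$. Setting $\eta=\min\{\eta_1,\gamma,\delta'\}$ captures all three bounds at once. Let also $R$ be the dyadic constant of Lemma \ref{LemmartR}(3), so that $W\subset D_R(0)$.

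Next I would derive the sandwich $2/R\le \rho_W(z)\le 2/\eta$ on $K$ by the same Schwarz--Pick argument as in Lemma \ref{sub and euclid}. The lower bound is global on $W$: from $W\subset D_R(0)$, monotonicity of the Poincar\'e metric under inclusion gives $\rho_W\ge \rho_{D_R(0)}\ge 2/R$. For the upper bound, for each $z\in K$ the Euclidean disk $D_\eta(z)$ lies inside $W$, so $\rho_W(z)\le \rho_{D_\eta(z)}(z)=2/\eta$. The identical argument shows $\rho_W\le 4/\eta$ on the thickened set $\{w\in W:d(w,\partial W)\ge \eta/2\}$.

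The lower bound $d(z,J_f)\le (R/2)\,d_W(z,J_f)$ then falls out immediately, since every rectifiable path $\alpha$ in $W$ obeys $L_W(\alpha)\ge (2/R)\,\ell_{\mathrm{eucl}}(\alpha)$. For the matching upper bound I would split into two regimes. When $d(z,J_f)<\eta/2$, the nearest $p\in J_f$ satisfies $d(p,\Theta)\ge \eta/2$, hence $p\in W$; the Euclidean segment $[z,p]$ stays inside $\{d(\cdot,\partial W)\ge \eta/2\}$, where $\rho_W\le 4/\eta$, and therefore $d_W(z,J_f)\le d_W(z,p)\le (4/\eta)\,d(z,J_f)$. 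When $d(z,J_f)\ge \eta/2$ the ratio is controlled by comparing against a single reference point: using Proposition \ref{root finding} construct a dyadic approximation $p_0$ of a repelling periodic point lying in $J_f\cap K$ (which is not in $\Theta$ for generic degree and can be singled out algorithmically), cover $K$ by an $(\eta/4)$-net of Euclidean disks contained in $\{d(\cdot,\partial W)\ge \eta/2\}$ whose cardinality is $O(R^2/\eta^2)$, and chain $z$ to $p_0$ through adjacent disk centers; the hyperbolic length of each hop is at most $(4/\eta)\cdot(\eta/2)=2$, producing a computable hyperbolic diameter bound $D$ for $K$ in $W$ and thus $d_W(z,J_f)\le D\le (2D/\eta)\,d(z,J_f)$. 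Taking $C=\max\{R/2,\,4/\eta,\,2D/\eta\}$ finishes the proof.

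The only real technical point, and the step that goes slightly beyond Lemma \ref{sub and euclid}, is the second regime of the upper bound: turning the pointwise density estimate into an explicit, dyadic hyperbolic diameter bound for $K$. This is handled by the net-and-chain construction above, with every constant ($\eta_1$, $\eta$, $R$, the net cardinality, the coordinates of $p_0$) produced directly from Lemma \ref{LemmartR}, the non-uniform data $(N1,N2)$, and Proposition \ref{root finding}.
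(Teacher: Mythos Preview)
The paper offers no proof here beyond the remark that the lemma ``can be proven similarly to Lemma \ref{sub and euclid}''. That lemma is a pointwise Poincar\'e-density estimate, and your sandwich $2/R\le\rho_W\le 4/\eta$ on $K$ (and its $\eta/2$-thickening) is exactly its analog; the lower distance bound and Case 1 of the upper bound follow correctly and constructively from it. Through Case 1 you are doing precisely what the paper intends, and you rightly flag that the passage from densities to distances is where the extra work lies.

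Case 2 has a gap. The net-and-chain step presumes you can hop from $z$ to the fixed target $p_0$ through overlapping $(\eta/4)$-disks with centres in $K$, but nothing forces this family of disks to be connected: neither $K=W_1\setminus(U_\gamma(\widetilde C)\cup U)$ nor the larger set $\{w:d(w,\partial W)\ge\eta/2\}$ need be connected, since the dyadic neighbourhood $U\cup U_\gamma(\widetilde C)$ of $\Theta$ may separate the non-convex dyadic region $V_1$. When $z$ and $p_0$ land in different components, no chain exists and your diameter bound $D$ is unavailable. (The aside that $p_0$ ``is not in $\Theta$ for generic degree and can be singled out algorithmically'' is likewise unjustified as written.) In fairness, the paper's one-line reference does not address this passage either; a cleaner repair is to drop the global target $p_0$ and instead walk along the straight segment from $z$ toward its nearest Julia point $p$, which lies entirely in the Fatou set (any Julia point on it would be strictly closer to $z$ than $p$), hence misses $\Theta$ and stays in a single component of $W$, with $d(w,\Theta)\ge d(w,J_f)=|w-p|$ along its length---though turning this into a uniform, constructive bound when $p$ itself lies in $\Theta$ still requires care.
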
 \begin{Lm}\label{separated} There exists a TM
which computes dyadic numbers $S,K>0$ such that for any
$k\in \mathbb{N}$ and $z\in \mathbb{C}$ one has
$d\left(f^k(z),J_f\right)\geqslant Sd(z,J_f)^K$. \end{Lm}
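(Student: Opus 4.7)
The plan is to combine the expansion lemmas (Lemma \ref{crit dist}, Lemma \ref{iter}, Lemma \ref{LemmartR}, and Proposition \ref{critical}) into a uniform H\"older-type bound. I expect the exponent $K$ to be determined by the maximum critical multiplicity $M=\max\{m_c:c\in CJ_f\}$, because a single iteration of $f$ near a critical point $c$ of degree $m$ can at worst transform $d(z,J_f)=\eta$ into $d(f(z),J_f)\sim \eta^m$, via Lemma \ref{crit dist} together with the trivial inequality $|z-c|\geq d(z,J_f)$ (since $c\in J_f$), and this is the only source of exponent loss along the orbit.

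First I would dispose of the easy cases. If $z\in J_f$ the inequality is trivial, and if $d(z,J_f)\geq \eta_0$ for a small fixed $\eta_0>0$, a compactness argument using that every Fatou component of $f$ is an attracting basin (the absence of parabolic, Siegel, and Herman components has already been established in this section) shows that $d(f^k(z),J_f)$ is bounded below uniformly in $k$ by some $\eta_0'>0$. So I may restrict to $0<d(z,J_f)=\eta<\eta_0$, and set $\tau(z)=\min\{k:d(f^k(z),J_f)\geq \eta_0\}$; for $k\geq \tau(z)$ the previous case already applies.

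For $k<\tau(z)$ the orbit point $f^k(z)$ lies in $U_{\eta_0}(J_f)\subset V$, and I would track $d(f^j(z),J_f)$ by decomposing the orbit into three regimes: (i) $f^j(z)\in U_\gamma(\widetilde{C})$, where Proposition \ref{critical} bundles the critical passage into a ``bounce'' that restores expansion by a factor $q>1$ at the expense of a one-time exponent loss $\eta\mapsto K\eta^m$ controlled by Lemma \ref{crit dist}; (ii) $f^j(z)\in U$ (near $M=\Omega_f$), handled by Lemma \ref{iter} together with Lemma \ref{expansion}; (iii) $f^j(z)\in W\setminus (U_\gamma(\widetilde{C})\cup U)$, handled by Lemma \ref{LemmartR} combined with Lemma \ref{LemmaMetrics} to transfer the hyperbolic expansion $t>1$ to the Euclidean metric. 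Chaining these bounds along the orbit would yield $d(f^k(z),J_f)\geq S\eta^M$.

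The main obstacle I anticipate is ensuring that the exponent stays uniformly $M$ regardless of how many critical passages the orbit makes in $[0,k]$, since a naive iteration of the one-step estimate gives a catastrophic $\eta^{M^k}$ bound. The key idea to avoid this compounding is that each application of Proposition \ref{critical} gives a net expansion $q>1$ starting from the distance at entry to the critical neighborhood, so later critical visits only improve the bound after the first passage's exponent loss $\eta\mapsto \eta^M$ has been paid once. Making this precise may require an induction on the number of critical visits in $[0,k]$ bounded by $\log_q(\eta_0/\eta)$, together with the equivalence of hyperbolic and Euclidean metrics (Lemma \ref{LemmaMetrics}) and estimates in the spirit of Lemma \ref{sub and euclid} near the critical points, to control how hyperbolic expansion translates into Euclidean distance between successive critical visits.
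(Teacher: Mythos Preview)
Your plan is essentially the paper's proof: the same three-regime decomposition of the orbit, the same supporting lemmas (Proposition \ref{critical}, Lemmas \ref{expansion}--\ref{iter}, Lemma \ref{LemmartR}, Lemma \ref{LemmaMetrics}, Lemma \ref{crit dist}), and the same conclusion that $K$ is the maximal critical multiplicity $m$. The paper packages the decomposition via an explicit sequence $l_i$ defined recursively according to which regime $z_i=f^{l_i}(z_0)$ lies in, then derives six one-step properties and finishes by a short case analysis rather than an induction on the number of critical visits.

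One point of clarification: you attribute the exponent loss to the \emph{first} critical passage, but Proposition \ref{critical} gives pure Euclidean expansion $d(f^{kN+N_0}(z),J_f)\geqslant q\,d(z,J_f)$ with \emph{no} power loss whenever the bounce completes. The $m$-th power enters only at the \emph{terminal} truncation, when the target iterate $k$ lands inside an unfinished critical bounce or an unfinished $U$-block; there Lemma \ref{crit dist} (packaged as the paper's inequality $d(f^j(z),J_f)\geqslant \eta\,d(z,J_f)^m$) is invoked once. Since at most one such truncation occurs, no induction is needed. A second subtlety worth flagging: the paper uses the hyperbolic non-contraction \eqref{hyper expansion} \emph{globally} along the orbit, not only in regime (iii); this is what lets one compare Euclidean distances at the first and last ``elsewhere'' indices with a single factor $C^{-2}$ (via Lemma \ref{LemmaMetrics} at the two endpoints) rather than accumulating conversion constants at every regime change.
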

 \begin{proof}
Since $f$ does not have parabolic periodic orbits one has
$$\inf_{n\in \mathbb{N}}d\left(f^n\left(\mathbb{C}\setminus
V_1\right),J_f\right)>0.$$ Therefore there exist
$S<1,K_0>1$ such that for any $z\notin V_1$ and $k\in
\mathbb{N}$ $$d\left(f^k(z),J_f\right)> Sd(z,J_f)^{K_0}.$$
Make $S$ smaller if necessary in order to have $$ d(\mc\sm
V_1,J_f)> S\sup\limits_{z\in V_1}d(z,J_f)^{K_0}.$$
 Let $z_0\in  V_1\setminus J_f$.
 Assume there exists $l\in \mathbb{N}$ such that
$$d(f^l(z_0),J_f)\leqslant Sd(z_0,J_f)^{K_0}.$$ Let $l$ be
the minimal such number. Then $$f^j(z_0)\in V_1\text{
 for }j=0,1,\ldots, l.$$ It follows from Lemma \ref{crit dist} that one can compute dyadic
 numbers
 $1\ge \eta>0$, $m\ge 1$ such that
\begin{eqnarray}\label{eta m}d\left(f^j(z),J_f\right)\geqslant \eta
d\left(z,J_f\right)^m\end{eqnarray} for any $z\in U_\gamma\left(\widetilde{C} \right)$ and
$j=1,2,\ldots, N_0+N-1$.
 Put $l_0=0$.
 Denote inductively
 $l_i$ and $z_i=f^{l_i}(z_0)$ as follows:
\begin{eqnarray}\label{EquationLiZi}
 l_{i+1}=\left\{ \begin{array}{ll}l_i+1,&\text{if}\;\;z_i\notin U_\gamma\left(\widetilde{C} \right)\cup
 U,\\ l_i+N,&\text{if} \;\;z_i\in U\;\text{and}\;l_i+N\leqslant l
,\\l_i+N_0+kN, &\text{if} \;\;z_i\in U_\gamma\left(\widetilde{C} \right)\;\text{and}\;l_i+N_0+kN\leqslant l,\\
l,&\text{otherwise}. \end{array} \right. \end{eqnarray}
while $l_i<l$, where $$k=\min\{j\in
\mathbb{N}:f^{Nj+N_0}(z_i)\notin U\}.$$ Now by (\ref{hyper
expansion}), Lemma \ref{LemmartR} and Proposition
\ref{critical} the following is true:
 \begin{itemize}
\item[1)] $d_ W(z_{i+1},J_f)\geqslant d_ W(z_i,J_f)$ for any $i$; \item[2)] if $z_i\notin
U_\gamma\left(\widetilde{C}\right)\cup\, U$, then
 $$d_ W(z_{i+1},J_f)\geqslant t\,d_ W(z_i,J_f)\text{ and }d(z_{i+1},J_f)\geqslant r d(z_i,J_f);$$
\item[3)] if $z_i\in U$ and $l_i+N\le l$, then $$d(z_{i+1},J_f)\geqslant qd(z_i,J_f);$$ \item[4)]
if $z_i\in U$ and $l_i+N>l$, then $$d(z_{i+1},J_f)\geqslant r^{N-1}d(z_i,J_f);$$
 \item[5)] if $z_i\in U_\gamma\left(\widetilde{C}\right)$ and $l_i+N_0+kN\leqslant l$, then
$$d(z_{i+1},J_f)\geqslant \,d(z_i,J_f);$$
 \item[6)] if $z_i\in
U_\gamma\left(\widetilde{C}\right)$ and $l_i+N_0+kN>l$,
then $$d\left(z_{i+1},J_f\right)\geqslant \eta r^{N-1}
d\left(z_i,J_f\right)^m.$$ \end{itemize} The items $1)-5)$
are direct corollaries of the choice of the dyadic
constants, formula (\ref{hyper expansion}), Lemma
\ref{LemmartR} and Proposition \ref{critical}. Let us proof
$6)$. Let $$z_i\in U_\gamma\left(\widetilde{C}\right)\text{
and }l_i+N_0+kN>l.$$ If $l_i+N_0>l$, then $6)$ follows
directly from $(\ref{eta m})$. Otherwise denote $n$ the
remainder of $l-l_i-N_0$ modulo $N$. Then by $(\ref{eta
m})$ and (\ref{EquationExpansionNearM})
$$d\left(z_{i+1},J_f\right)\geqslant
r^{N-1}d\left(f^{l-n}(z_i),J_f\right)\geqslant
r^{N-1}d\left(f^{l_i+N_0}(z_i),J_f\right)\ge r^{N-1}\eta
d\left(z_i,J_f\right)^m.$$

Let $s$ be the number of $i$-th, for which $$z_i \notin
U_\gamma\left(\widetilde{C}\right)\cup U.$$ If $s\neq 0$
then let $I_1$
be the minimal and $I_2$ be the maximal such $i$. \\
{\bf case 1: $s=0$.} Then one of the following two possibilities holds.\\
{\bf a)} $z_0\in U_\gamma\left(\widetilde{C}\right)$. Then $l=l_1$. By $6)$,
$$d\left(f^l(z_0),J_f\right)\geqslant \eta r^{N-1} d\left(z_0,J_f\right)^m.$$
 {\bf b)} $z_0\in U$.
Then by $3)$ and $4)$,
$$d\left(f^l(z_0),J_f\right)\geqslant r^{N-1}
d\left(z_0,J_f\right).$$
\\ {\bf case 2: $s\neq 0$.} There are two possibilities.\\
{\bf a)} $l_{I_1}=l$. Then $I_1=I_2$. As in the case 1, $$d\left(f^l(z_0),J_f\right)\geqslant \eta
r^{N-1}d\left(z_0,J_f\right)^m.$$
 {\bf b)} $l_{I_1}<l$. Then, by $3)$ and $5)$, $d(z_{I_1},J_f)\geqslant d(z_0,J_f)$.
 By $(\ref{equi norms})$,
\begin{eqnarray}\label{ineq7}
 d(z_{I_2},J_f)\geqslant C^{-1}d_ W(z_{I_2},J_f)\geqslant C^{-1}
d_ W(z_{I_1},J_f)\geqslant C^{-2} d(z_0,J_f). \end{eqnarray} Among the numbers
$I_2+1,I_2+2,\ldots,l$ there is at most one number $j$ such that $$z_j\in
U_\gamma\left(\widetilde{C}\right)$$ and at most one number $j$ such that $$z_j\in U,\text{ but
}l_j+N>l.$$ It follows that \begin{eqnarray}\label{ineq8}d(f^l(z_0),J_f)\geqslant r^{m+N}\eta
d\left(z_{I_2},J_f\right)^m\geqslant Dd(z_0,J_f)^m,\end{eqnarray} where $D=r^{m+N}\eta C^{-2m}.$

Thus, in both case 1 and case 2 $$d(f^l(z_0),J_f)\geqslant
Dd(z_0,J_f)^m.$$ It follows that Lemma \ref{separated}
holds for $S=D$ and $K=\max\{K_0,m\}.$
\end{proof} \begin{Th}\label{PropositionPolytimeLeaving}
 There exists a TM which computes the coefficients of a polynomial $p(n)$
 such that for any $z$ with $d(z,J_f)>2^{-n}$ one has
$$f^{p(n)}(z)\notin V.$$ \end{Th}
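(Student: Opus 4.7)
The plan is to show that the forward orbit of any $z$ with $d(z, J_f) > 2^{-n}$ escapes $V$ within $O(n^2)$ iterations of $f$. Assuming $z \in V$ (otherwise $p(n)=0$ works), I would track the orbit via the sequence $z_i = f^{l_i}(z)$ from (\ref{EquationLiZi}), extended forward indefinitely and stopped only at the first $i$ with $z_i \notin V$. Each transition has one of three types: (a) $l_{i+1}-l_i = 1$ when $z_i \notin U_\gamma(\widetilde{C}) \cup U$, (b) $l_{i+1}-l_i = N$ when $z_i \in U \sm U_\gamma(\widetilde{C})$, or (c) $l_{i+1}-l_i = N_0 + k_i N$ when $z_i \in U_\gamma(\widetilde{C})$, with $k_i$ the least $j$ such that $f^{N_0+jN}(z_i) \notin U$. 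Denote the counts by $N_a, N_b, N_c$; Lemma \ref{separated} supplies the uniform lower bound $d(z_i, J_f) \ge S \cdot 2^{-nK}$ along the orbit.

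The principal step is the estimate $N_a = O(n)$. I would combine the monotonicity $d_W(z_{i+1}, J_f) \ge d_W(z_i, J_f)$, established in the proof of Lemma \ref{separated} (item $1$) across all three transition types, with the type-(a) expansion $d_W(z_{i+1}, J_f) \ge t\, d_W(z_i, J_f)$ from Lemma \ref{LemmartR}. Since $\overline{W_1 \sm (U_\gamma(\widetilde{C}) \cup U)}$ is compactly contained in $W$, the supremum $M_1$ of $d_W(\cdot, J_f)$ over this compact set is finite, while Lemma \ref{LemmaMetrics} gives $d_W(z_0, J_f) \ge C^{-1} \cdot 2^{-n}$. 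Hence after at most $\log_t(M_1 C 2^n) = O(n)$ (a)-steps $d_W$ exceeds $M_1$, after which any further (a)-step (requiring $z_i \notin U_\gamma(\widetilde{C}) \cup U$) forces $z_i \notin W_1$, hence $z_i \in V \sm V_1$, and the containment $f^{-1}(V) \Subset V_1$ then yields $f(z_i) \notin V$.

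With $N_a = O(n)$ in hand, I combine the per-step Euclidean expansion --- each (b)- or (c)-step multiplies $d(z_i, J_f)$ by at least $q > 1$ (by (\ref{EquationExpansionNearM}) and Proposition \ref{critical}), each (a)-step by at least $r$ (Lemma \ref{LemmartR}) --- with the obvious upper bound $d(z_i, J_f) \le \mathrm{diam}(V)$, obtaining
\[
q^{N_b + N_c} \le r^{-N_a} \cdot 2^n \cdot \mathrm{diam}(V) = 2^{O(n)},
\]
whence $N_b + N_c = O(n)$. The same $q$-expansion argument, applied to iterations of $F = f^N$ inside $U$ and starting from the Lemma \ref{separated} lower bound on $d(f^{N_0}(z_i), J_f)$, shows $k_i = O(n)$ for each (c)-step. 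Summing step sizes, the total number of $f$-iterations is $N_a + N \cdot N_b + (N_0 + O(n) \cdot N) \cdot N_c = O(n^2)$, so $p(n) = An^2 + B$ works for computable $A, B$.

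The main obstacle is the estimate $N_a = O(n)$: it requires the monotonicity of $d_W$ across all three transition types (not only (a)-steps, which are the ones with direct hyperbolic expansion), combined with Lemma \ref{LemmaMetrics} and the containment $f^{-1}(V) \Subset V_1$ to convert the hyperbolic upper bound into a dynamical exit condition.
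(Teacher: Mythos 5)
Your proof is correct and follows essentially the same route as the paper: bound the number of indices with $z_i\notin U_\gamma(\widetilde C)\cup U$ by $O(n)$ via the hyperbolic expansion $t$ and the monotonicity of $d_W$, bound the time spent near $\widetilde C\cup M$ between two such indices by $O(n)$ iterations via the Euclidean expansion $q$ together with the $2^{-Kn}$ lower bound from Lemma \ref{separated}, and multiply to get $O(n^2)$. The paper packages the second half as a uniform bound $N_0+K_2Nn$ on the $f$-iterate gap between consecutive such indices rather than tracking $N_b,N_c$ and each $k_i$ separately, but the bookkeeping is equivalent.
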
 \begin{proof}
 Let $z_0\in V_1$ and $d(z_0,J_f)>2^{-n}$. Let $l$ be the maximal number, such that
$$f^j(z_0)\in V_1\text{ for }j=0,1,\ldots,l.$$ Construct
$l_i,z_i$ as in Lemma \ref{separated}. Then properties
$1)-6)$ hold. Let $s, I$ be as in Lemma \ref{separated} and
$i_1,\ldots i_s$
 be the sequence of indexes for which $$z_i\notin U_\gamma\left(\widetilde{C} \right)\cup
U.$$ Then, by Lemma \ref{LemmaMetrics} and properties $1)$
and $2)$ from Lemma \ref{separated}, $$R\ge
d(f^l(z),J_f)\ge C^{-2}t^sd(z,J_f)\ge C^{-2}t^s 2^{-n}.$$
It follows that $$s\leqslant
n\log\limits_t2+2\log\limits_tC+\log\limits_tR.$$ Let $i$
be an index such that $z_i\in U_\gamma\left(\widetilde{C}
\right)$ and $l_i+N_0\leqslant l$. Let $$k_i=\min\{j\in
\mathbb{N}:f^{Nj+N_0}(z_i)\notin U\}.$$ It follows from
Lemma \ref{separated} that $$d(z_i,J_f)\ge 2^{-Kn},\text{
and }\delta\ge d(f^{k_i}(z_i),J_f)\ge q^{k_i}d(z_i,J_f).$$
Therefore, $$k_i\leqslant K_2n$$ for some number $K_2>0$
which can be obtained algorithmically. Now it is easy to
see that among any $N_0+K_2Nn$ consecutive integer numbers
between $0$ and $l$ there exist $i$ such that $$z_i\notin
U_\gamma\left(\widetilde{C} \right)\cup U.$$
 It follows that $$l\leqslant
 (n\log\limits_t2+2\log\limits_tC+\log_tR)(N_0+K_2Nn).$$
 Since $f^{-1}(V)\Subset V_1\Subset V\Subset f(V)$,
 this finishes the proof.\end{proof}
 \subsection{The algorithm.}\label{SubsecAlgNonrec} Similarly to Proposition \ref{using Koebe} one can prove the
following result. \begin{Prop}\label{using Koebe nonrec} There is an algorithm computing two dyadic
constants $K_1,K_2>0$
 such that for any
$z\in V$ and any $k\in \mathbb{N}$ if $f^k(z)\in f(V)\setminus
f^{-1}(V)$ then one has $$ \frac{K_1}{|Df^k(z)|}\leqslant
d(z,J_f)\leqslant \frac{K_2}{|Df^k(z)|}.$$ \end{Prop}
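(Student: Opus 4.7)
The plan is to follow the template of Proposition \ref{using Koebe}, substituting the set $V$ from Proposition \ref{set U nonrec} for the subhyperbolic set used there, and replacing the finite postcritical set $PJ_f$ with $\Theta=\widetilde C\cup M$, whose accumulation part $M$ is now accessible via the dyadic set $U$ from the non-uniform information $(N2)$. Write $A=f(V)\setminus f^{-1}(V)$. Items (1) and (3) of Proposition \ref{set U nonrec} give $J_f\Subset f^{-1}(V)$, so $A$ is at positive Euclidean distance from $J_f$; item (2), combined with $f(PF_f)\subset PF_f$, yields $f(V)\cap PF_f=\varnothing$; and $\Theta\subset J_f\subset f^{-1}(V)$ forces $A\cap\Theta=\varnothing$. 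Hence $A$ is at positive distance from the whole postcritical set $\Theta\cup PF_f$ as well as from $J_f$.

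For the lower bound I would compute a dyadic $R>0$ smaller than $d(A,\,J_f\cup\Theta\cup PF_f)$. The distances to $J_f$ and $PF_f$ are treated exactly as in Proposition \ref{using Koebe}; the distance to $\Theta$ is handled by combining a dyadic enlargement of $U$ (possible since $U$ is itself dyadic) with dyadic approximations to the finite set $\widetilde C$ obtained from Proposition \ref{root finding}. For any $z\in V$ and any $k\in\mathbb{N}$ with $f^k(z)\in A$, the disk $U_R(f^k(z))$ contains no critical value of $f^k$, so the branch $\phi$ of $(f^k)^{-1}$ with $\phi(f^k(z))=z$ is univalent on it. By the Koebe Quarter Theorem, $\phi(U_R(f^k(z)))\supset U_{R|D\phi(f^k(z))|/4}(z)$; since $U_R(f^k(z))\cap J_f=\varnothing$ and $J_f$ is totally invariant, this smaller disk around $z$ also avoids $J_f$, giving $d(z,J_f)\ge R/(4|Df^k(z)|)$. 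Set $K_1=R/4$.

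For the upper bound I would algorithmically construct a finite family of pairs $W_j\Subset U_j$ of simply connected dyadic domains such that $\bigcup_j W_j\supset A$, each $W_j$ meets $J_f$ (ensured by placing the $W_j$ near a computable repelling periodic point, located via Proposition \ref{root finding}), and each $U_j$ avoids $\Theta\cup PF_f$. Fixing Riemann maps $\psi_j:U_j\to\mathbb{D}$ with $\psi_j(W_j)\Subset\mathbb{D}$, Theorem \ref{Koebe thm} applied to $\phi\circ\psi_j^{-1}$ produces a dyadic number $r_j$, independent of $z$ and $k$, with $\phi(W_j)\subset U_{r_j/|Df^k(z)|}(z)$. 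Because $W_j\cap J_f\neq\varnothing$ and $J_f$ is totally invariant, $\phi(W_j)$ also meets $J_f$, so $d(z,J_f)\le r_j/|Df^k(z)|$; set $K_2=\max_j r_j$.

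The main obstacle is the construction of the cover $\{W_j,U_j\}$ in the last step: in the subhyperbolic case one only has to sidestep a finite set in $J_f$, whereas here the $U_j$ must bypass the potentially infinite accumulation set $M$ using only the dyadic approximation $U$ from $(N2)$, together with the finite piece $\widetilde C$ and a dyadic approximation of the relevant part of $PF_f$. The positive distance between $A$ and $\Theta\cup PF_f$ makes the construction possible: cover $A$ by a sufficiently fine dyadic grid, discard cells that meet a small dyadic neighborhood of $U$, $\widetilde C$, or the computed piece of $PF_f$, and then thicken each remaining cell into a simply connected dyadic $U_j$ equipped with a $W_j$ that touches $J_f$. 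Once the cover is in place, the entire proof reduces, as in Proposition \ref{using Koebe}, to a single application of the two-sided Koebe distortion estimate (\ref{Koebe}).
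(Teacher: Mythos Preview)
Your proposal is correct and follows exactly the approach the paper intends: the paper itself gives no independent proof, stating only that the result is proved ``Similarly to Proposition \ref{using Koebe}'', and your adaptation---replacing the finite postcritical set $PJ_f$ by $\Theta=\widetilde C\cup M$ and using the dyadic set $U$ from the non-uniform information $(N2)$ together with approximations of the finite set $\widetilde C$ to algorithmically separate the annulus $f(V)\setminus f^{-1}(V)$ from the full postcritical set---is precisely the required modification. The one point worth tightening is the cover construction for the upper bound: rather than discarding grid cells near $U$ (which may overlap $A$ harmlessly, since $U$ is a planar neighborhood of $M$), you only need each simply connected $U_j$ to reach $J_f$ at a repelling periodic point lying outside $U\cup U_\gamma(\widetilde C)$, whose existence is guaranteed by $U_\delta(U)\cap\widetilde C=\varnothing$ and the density of repelling periodic points in $J_f$.
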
 Now we are
ready to describe the algorithm. The steps of the algorithm are
analogous to the steps of the corresponding algorithm for a
subhyperbolic map. Compute dyadic numbers $s,\varepsilon>0$ such
that \be\label{eps s}\varepsilon<\min\{d(V_1,\C\sm f(V)),d(V,\C\sm
V_1)\},\;\; f(V)\subset U_s(J_f).\ee

 Assume that we would like to verify that a dyadic point $z$ is $2^{-n-1}$ close to $J_f$.
 Construct a dyadic set $U_2$ such that
 $$J_f\subset U_2\Subset f^{-1}(V).$$ Then we can approximate the distance from a point $z\notin U_2$
 to $J_f$ by the
 distance form $z$ to $U_2$ up to a constant factor.

Now assume that $z\in U_2$. Let $p(n)$ be the polynomial
from Proposition \ref{PropositionPolytimeLeaving}.
Similarly to the case of a subhyperbolic map, consider the
following subprogram:\\$i:=1$ \\{\bf while} $i\le p(n+1)$
{\bf do}\\ $(1)$ Compute dyadic approximations $$p_i\approx
f^i(z)=f(f^{i-1}(z))\;\;\text{and}\;\;d_i\approx
\left|Df^i(z)\right|=\left|Df^{i-1}(z)\cdot
Df(f^{i-1}(z))\right|$$ \\with precision
$\min\{2^{-n-1},\varepsilon\}$.\\ $(2)$ Check the inclusion
$p_i\in V_1$:\begin{itemize}\item[$\bullet$] if $p_i\in
V_1$, go to step $(5)$;\item[$\bullet$] if $p_i\notin V_1$,
proceed to step $(3)$;\end{itemize}
$(3)$ Check the inequality $d_i\ge K_2 2^{n+1}+1$. If true, output $0$ and exit the subprogram, otherwise\\
$(4)$ output $1$ and exit subprogram.\\
$(5)$ $i\ra i+1$\\
{\bf end while}\\
$(6)$ Output $0$ end exit.\\
{\bf end}

The subprogram runs for at most $p(n+1)$ number of while-cycles each
of which consist of a constant number of arithmetic operations with
precision $O(n)$ dyadic bits. The following proposition is proved in
the same way as Proposition \ref{subprogram}.
\begin{Prop} Let $f(n,z)$ be the output of the subprogram.
Then \be f(n,z)=\left\{\ba
1,&\text{if}\;\;d(z,J_f)>2^{-n-1},\\
0,&\text{if}\;\;d(z,J_f)<K2^{-n-1},\\\text{either}\;0\;\text{or}\;1,&\text{otherwise},\ea
\right.\ee where $K=\frac{K_1}{K_2+1}$, \end{Prop}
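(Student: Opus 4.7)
The plan is to follow the template of Proposition \ref{subprogram} line by line, with two substitutions: the role played there by the subhyperbolic exponential expansion rate $\lambda$ is taken over by the polynomial escape bound from Theorem \ref{PropositionPolytimeLeaving}, and Proposition \ref{using Koebe} is replaced by its nonrecurrent analogue Proposition \ref{using Koebe nonrec}. I would then analyze the subprogram according to where it exits.

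If the subprogram exits at step $(6)$, it has run all $p(n+1)$ while-cycles with $p_i \in V_1$ throughout. By the choice of $\varepsilon$ in $(\ref{eps s})$, each $p_i \in V_1$ propagates to $f^i(z) \in f(V)$, and combined with $f^{-1}(V) \Subset V_1$ this in fact forces $f^i(z) \in V$ for every $i \le p(n+1)$. In particular $f^{p(n+1)}(z) \in V$, so by Theorem \ref{PropositionPolytimeLeaving} applied with $n+1$ in place of $n$ we must have $d(z, J_f) \le 2^{-n-1}$. Consequently any input $z$ with $d(z, J_f) > 2^{-n-1}$ exits at step $(3)$ or step $(4)$.

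Suppose instead the subprogram reaches step $(3)$ at some iteration $i \le p(n+1)$. Then $p_{i-1} \in V_1$ while $p_i \notin V_1$, and the precision $\varepsilon$ translates this pair of facts to $f^{i-1}(z) \in V$ and $f^i(z) \notin f^{-1}(V)$, hence $f^i(z) \in f(V) \setminus f^{-1}(V)$. Proposition \ref{using Koebe nonrec} is therefore applicable with $k = i$. If $d_i \ge K_2 2^{n+1} + 1$, the $2^{-n-1}$-precision yields $|Df^i(z)| \ge K_2 2^{n+1}$, whence $d(z, J_f) \le K_2/|Df^i(z)| \le 2^{-n-1}$, consistent with output $0$. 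Otherwise $|Df^i(z)| \le (K_2+1) 2^{n+1}$, which gives $d(z, J_f) \ge K_1/|Df^i(z)| \ge K\, 2^{-n-1}$, consistent with output $1$. Combining the two cases gives the stated trichotomy.

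The only genuinely nonroutine step is the bookkeeping of $\varepsilon$ guaranteeing the two implications $p_i \in V_1 \Rightarrow f^i(z) \in V$ and $p_i \notin V_1 \Rightarrow f^i(z) \notin f^{-1}(V)$; these ensure Proposition \ref{using Koebe nonrec} can be invoked at precisely the moment step $(3)$ is entered. Once these are in place, the rest is a mechanical transcription of the subhyperbolic proof.
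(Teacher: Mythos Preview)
Your proposal is correct and follows exactly the approach the paper intends: the paper itself offers no separate argument here, stating only that the proposition ``is proved in the same way as Proposition \ref{subprogram}'', and your substitution of Theorem \ref{PropositionPolytimeLeaving} for the subhyperbolic expansion bound and of Proposition \ref{using Koebe nonrec} for Proposition \ref{using Koebe} is precisely what is needed. One small cleanup: in the step-$(6)$ case you do not need to force $f^i(z)\in V$ for \emph{every} $i$---it suffices that $p_{p(n+1)}\in V_1\Subset V$ together with the precision bound gives $f^{p(n+1)}(z)\in V$, which is all Theorem \ref{PropositionPolytimeLeaving} requires.
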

\section{Maps with parabolic periodic points.}
  In this section we will sketch the proof of Theorem
  \ref{main1}. Let $f$ be a rational map such that
  $\Omega_f$ does not contain either critical points or
  parabolic periodic points. We will assume that the
  periods and multipliers of the parabolic periodic points
  are given as a part of the non-uniform information. Replacing $f$ with some
iteration of $f$ if necessary, we may assume that the
multiplier and the period of each of the parabolic periodic
points is equal to $1$. Then at each parabolic
  periodic point (in fact, a fixed point) $p$ the map $f$ can be written in the form
$$f(z)=z+(z-p)^{n_p+1}+O(z^{n_p+2}),$$ where $n_p\in \mn$.
We will assume that the numbers $n_p$ are also given as a
part of the non-uniform information.  Observe that some
parabolic fixed points of $f$
 may belong to the postcritical set of $f$.

In this section we denote by $CJ_f$ the set of critical
points $c\in CJ_f$ such that the forward orbit
$\mathcal{O}(z)$ does not contain any parabolic periodic
points and by $CJ_f^P$ the set of critical points $c\in
J_f$ such that an iteration of $c$ hits a parabolic fixed
point. Let $N_0(c),N_0,M,\widetilde{C}$ be the same as in
section \ref{SectionNoRecNoPar}. For numbers
$\alpha,\beta>0$, a direction $\nu\in[0,2\pi]$ and a point
$w\in \mc$ introduce a sector
\be\label{EquationSector}V_w^\nu(\beta,\alpha)=\{z\in
U_\alpha(w):\arg(z-w)\in(\nu-\beta,\nu+\beta)\}.\ee

 Let $p_1,\ldots,p_s$ be
all parabolic fixed points of $f$. The algorithm will use the same
{\bf non-uniform information} $(N1,N2)$ which was described in
subsection \ref{SubsecPrep} and, in addition,
\\${\bf N3.}$ approximate positions $a_j$ of $p_j$ with a dyadic
precision $\alpha>0$ and a number $\beta>0$
such that for each $j$:\\
$1)$ $p_j$ is a unique fixed point of $f$ in
$U_{2\alpha}(a_j)$ (and thus $p_j$ can be approximated
efficiently using Newton method);\\
$2)$ for each critical point $c$ which lie in $J_f$ one has
$$U_{2\alpha}(a_j)\setminus \{p_j\}\cap
\mathcal{O}(c)=\varnothing;$$\\
\begin{figure}\centering\epsfig{file=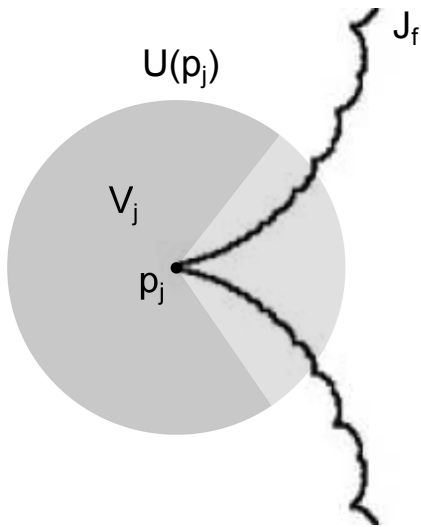,width=.40\linewidth}\caption{Illustration to
non-uniform information $N3$.} \end{figure}
$3)$ for each attracting
direction $\nu\in[0,2\pi]$ at $p_j$ the sector
$$V_j^\nu=V_{p_j}^\nu(\beta,2\alpha)$$ belongs to an attracting
Fatou petal at $p_j$, $$\overline{V_j^\nu}\cap J_f=\{p_j\}$$ and the
set
$$U_{2\alpha}(p_j)\setminus ((\cup V_j^\nu) \cup \{p_j\})$$
belongs to the union of the repelling petals at $p_j$.
 Observe that for a point
$z$ in the sector $V_j^\nu$ the distance $d(z,J_f)$ up to a constant
factor can be approximated by $|z-p_j|$. We will use the following
result from \cite{Bra06} (see Lemma
$8$).\begin{Lm}\label{LemmaLongIter} Let
$g(z)=z+c_{n+1}z^{n+1}+c_{n+2}z^{n+2}+\ldots$ be given as a power
series with radius of convergence $R>0$. There is an algorithm which
given a point $z$ with $|z|<1/m<R$ computes $l=[m^n/C]$-th iteration
of $z$ and the derivative $Dg^l(z)$ with precision $2^{-s}$ in time
polynomial in $s$ and $\log m$. Here $C$ is some dyadic constant
which can be algorithmically constructed. The algorithm uses an
oracle for the coefficients $c_n$.\end{Lm}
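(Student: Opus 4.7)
My approach is to pass to Fatou coordinates at the parabolic fixed point $0$. Since $g(z)=z+c_{n+1}z^{n+1}+c_{n+2}z^{n+2}+\ldots$, the formal Fatou coordinate $\Phi$ satisfying the conjugacy equation $\Phi(g(z))=\Phi(z)+1$ has an asymptotic expansion of the form
$$\Phi(z)=-\frac{1}{n c_{n+1}z^{n}}+\alpha \log z+\sum_{j\ge 0}a_j z^j,$$
with coefficients $\alpha,a_j$ that are rational expressions in $c_{n+1},c_{n+2},\ldots$ computable recursively by matching powers in the conjugacy equation. Querying the oracle for $g$, I can therefore construct, in time polynomial in a chosen truncation order $N$ and the target precision, a truncation $\Phi_N$ of $\Phi$ together with an approximate inverse $\Psi_N$ satisfying the defect bound $|\Phi_N(g(z))-\Phi_N(z)-1|\le C_N|z|^{N}$ on a fixed attracting petal containing $\{|z|<1/m\}$.

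The algorithm itself is then short: compute $w=\Phi_N(z)$; form $w'=w+l$; return $\Psi_N(w')$ as the approximation of $g^l(z)$, and $\Psi_N'(w')\cdot\Phi_N'(z)$ as the approximation of $Dg^l(z)$. Since $|\Phi(z)|$ is of order $m^n/(n|c_{n+1}|)$ while $l=[m^n/C]$, the target $w'$ stays in a bounded compact region of the $w$-plane where $\Psi_N$ is Lipschitz and well-conditioned; all arithmetic is carried out on numbers with $O(s+n\log m)$ bits and on polynomials of degree $O(N)$, which is polynomial in $s$ and $\log m$.

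The proof then reduces to an error estimate. Telescoping the defect yields $\Phi_N(g^l(z))=\Phi_N(z)+l+\sum_{k=0}^{l-1}E_N(g^k(z))$, and the classical parabolic asymptotics $|g^k(z)|^{-n}\approx |z|^{-n}+kn|c_{n+1}|$ give $|g^k(z)|=O((k+m^n)^{-1/n})$. The accumulated defect is therefore bounded by
$$\sum_{k=0}^{l-1}C_N(k+m^n)^{-N/n}\le C_N'\,m^{\,n-N},$$
which is $<2^{-s-1}$ as soon as $N\ge n+(s+1)/\log_2 m$; in particular $N=O(s+n)$ always suffices. Pulling this estimate back through $\Psi_N$ delivers $2^{-s}$ accuracy for $g^l(z)$, and the derivative bound follows via the cocycle identity $Dg^l(z)=\Psi_N'(w+l)\,\Phi_N'(z)$ together with uniform bounds on $\Psi_N'$ on the relevant compactum. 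The main obstacle is exactly this error accumulation: although $E_N$ decays like $|z|^N$, it is summed over $l\approx m^n$ orbit points that all remain close to the singularity at $0$. What saves the argument is the polynomial escape rate $|g^k(z)|=O(k^{-1/n})$ of parabolic orbits; making this explicit is what certifies that the polynomial truncation $N=O(s+n)$ suffices. The remaining components---computing $O(N)$ series coefficients from the oracle and evaluating polynomials of degree $O(N)$ at numbers of bit-length $O(s+n\log m)$---are standard polynomial-time operations.
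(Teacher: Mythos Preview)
The paper does not prove this lemma; it is quoted verbatim from Braverman \cite{Bra06} (Lemma~8 there), so there is no ``paper's own proof'' to compare against. What is worth noting, though, is that your approach is exactly the one the paper discusses in the closing paragraph of Section~4 as an \emph{alternative} numerical method --- computing iterates via the \'Ecalle asymptotic expansion (\ref{EqnPsi}) of the Fatou coordinate --- and the paper explicitly writes ``this is not a rigorous method since we do not know how many terms of the series to take to obtain the desired precision.'' So the author knows your route, regards it as different from Braverman's, and flags it as not (yet) justified.

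The concrete gap is the size of your defect constant $C_N$. The formal Fatou series $\sum_{j\ge 0}a_j z^j$ is generically divergent of Gevrey order~$1$: one has $|a_j|\le C\,A^j j!$ and no better. Since the coefficients of your truncation $\Phi_N$ are exactly these $a_j$, the implied constant in $|E_N(z)|\le C_N|z|^{N}$ grows like $N!$. Your final bound $C_N'\,m^{\,n-N}<2^{-s-1}$ then reads, after Stirling, roughly $N(\log m-\log N)\gtrsim s$, which forces $N\lesssim m$ and hence limits the attainable precision to $s=O(m)$. For fixed $m$ and $s\to\infty$ no truncation order works, so the claim ``$N=O(s+n)$ always suffices'' is false as written. (In that regime $l=[m^n/C]$ is bounded and naive iteration is already polynomial in $s$, so a two-branch algorithm can be assembled --- but that is not the argument you gave, and controlling the Gevrey constants uniformly still needs work.) A secondary slip: no attracting petal contains the full disk $\{|z|<1/m\}$; that disk meets all $2n$ petals, and your decay law $|g^k(z)|=O\bigl((k+m^n)^{-1/n}\bigr)$ holds only along attracting directions, whereas the lemma is stated for arbitrary $z$ with $|z|<1/m$.
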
 Lemma \ref{LemmaLongIter}
implies that we can compute "long" iterations of points close to
parabolic periodic points efficiently.

By the assumptions on the map $f$ one can algorithmically
construct a dyadic number $\varepsilon>0$ such that the
following is true. Let a point $z$ be $\varepsilon$-close
to the Julia set. Assume that some iteration $f^k(z)$
belong to $U_\varepsilon(p_j)$. Consider the neighborhood
$V=U_\varepsilon(f^k(z))$. Let
$$V_0=U(z),V_1,\ldots,V_k=V$$ be the pullbacks of $V$ under
$f$ along the orbit of $z$, where $U(z)$ is a neighborhood
of $z$. Then the number of $n$ for which $V_n$ contains a
critical point $c\in J_f$ is bounded from above by a
constant independent from $k$ and $z$. In a similar fashion
as Proposition \ref{using Koebe} using ideas of Lemma
\ref{crit dist} we can prove the following statement.
\begin{Prop}\label{using Koebe par} We can algorithmically
construct dyadic numbers $K_1,K_2,\varepsilon>0$ such that
for each $z\in U_\varepsilon(J_f)$ if $k$ satisfies the
following $$f^n(z)\in U_\varepsilon(J_f)\;\text{ for
}\;n=1,\ldots, k,\;\text{ and }\;f^k(z)\in
U_\varepsilon(p_j)$$ for some parabolic fixed point $p_j$
then $$\frac{K_1d(f^k(z),J_f)}{|Df^k(z)|}\leqslant
d(z,J_f)\leqslant
\frac{K_2d(f^k(z),J_f)}{|Df^k(z)|}.$$\end{Prop}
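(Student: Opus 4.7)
The plan is to adapt the argument for Proposition \ref{using Koebe} to the parabolic setting, using two new ingredients: the local degeneration bounds from the proof of Lemma \ref{crit dist} to handle critical passages, and the uniform bound $L$ on the number of such passages stated in the paragraph preceding the proposition. The key observation is that $d(f^k(z),J_f)$ enters the estimate precisely because $f^k(z)$ is now allowed to approach $J_f$ (through the parabolic fixed point $p_j$), so we can no longer work with a disk of fixed radius around $f^k(z)$ as in Proposition \ref{using Koebe}; instead the disk must scale with $d(f^k(z),J_f)$.

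First I would fix $\varepsilon$ small enough, via the non-uniform information $N3$, so that the disk $D=U_{d(f^k(z),J_f)/2}(f^k(z))$ is disjoint from $J_f$ and so that the univalent neighborhood structure near each critical point in $J_f$ used in Lemma \ref{crit dist} applies. Let $D_n$ denote the connected component of $f^{-(k-n)}(D)$ containing $f^n(z)$; by the remark preceding the proposition, the larger pullbacks $V_n$, and hence $D_n$, contain a critical point at most $L$ times, with $L$ independent of $k$ and $z$. Let $0\le n_1<\cdots<n_l\le k$, $l\le L$, be the indices at which such critical passages occur.

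On each univalent block between consecutive critical passages the inverse branch of the corresponding iterate of $f$ is conformal on a definite enlargement of $D_n$, and the Koebe distortion theorem (Theorem \ref{Koebe thm}) yields two-sided comparisons
\[
\frac{\mathrm{diam}\, D_{n_{i+1}-1}}{K\,|Df^{n_{i+1}-1-n_i}(f^{n_i}(z))|}\le \mathrm{diam}\, D_{n_i}\le \frac{K\,\mathrm{diam}\, D_{n_{i+1}-1}}{|Df^{n_{i+1}-1-n_i}(f^{n_i}(z))|}
\]
with a universal Koebe constant $K$. At each critical passage $f:D_{n_i-1}\to D_{n_i}$ through a critical point $c$ of local degree $m_c$, the Taylor expansion $f(z)=f(c)+a_{m_c}(z-c)^{m_c}+\cdots$ used in Lemma \ref{crit dist} gives both the lower bound $d(f(w),J_f)\ge Kd(w,J_f)|w-c|^{m_c-1}$ and the matching upper bound $d(f(w),J_f)\le K'd(w,J_f)|w-c|^{m_c-1}$, and $|Df(w)|\asymp |w-c|^{m_c-1}$ near $c$; this absorbs the degenerate factor into the derivative. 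Telescoping the $l+1$ Koebe estimates and the $l$ critical estimates, and using $\mathrm{diam}\, D=d(f^k(z),J_f)$ together with $\mathrm{diam}\, D_0\asymp d(z,J_f)$, produces algorithmically computable constants $K_1,K_2>0$ with the asserted inequalities.

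The main obstacle is executing the telescoping cleanly through critical passages. At a critical point $|Df|$ vanishes, so composing Koebe estimates naively loses control; the point is that the factor $|w-c|^{m_c-1}$ which appears both in $|Df(w)|$ and in the comparison of $d(\cdot,J_f)$ before and after $f$ is \emph{the same} factor, so it cancels once both sides are combined. Verifying this cancellation on each of the at most $L$ critical steps, and ensuring that the implicit neighborhoods $D_{n_i}$ are still comparable to the correspondingly scaled disks around $f^{n_i}(z)$ (so that Koebe applies on the next univalent block), is the delicate bookkeeping that occupies the bulk of the argument.
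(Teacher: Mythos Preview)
Your overall plan coincides with the paper's own sketch (``in a similar fashion as Proposition~\ref{using Koebe} using ideas of Lemma~\ref{crit dist}'', together with the bound $L$ on critical passages set up in the paragraph just before the proposition). The ingredients you list---Koebe on univalent blocks, the local $|w-c|^{m_c-1}$ cancellation from Lemma~\ref{crit dist} at critical passages, and telescoping over at most $L$ such passages---are exactly what is intended.

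There is, however, a concrete gap in your execution of the \emph{upper} bound. Your disk $D=U_{d(f^k(z),J_f)/2}(f^k(z))$ is disjoint from $J_f$, hence from the postcritical set sitting in $J_f$; consequently its pullbacks $D_n$ actually contain \emph{no} critical points at all, the branch of $f^{-k}$ is univalent on $D$, and Koebe quarter yields the lower bound $K_1$ immediately. But precisely because $D_0\cap J_f=\varnothing$, your claimed comparison $\mathrm{diam}\,D_0\asymp d(z,J_f)$ fails: $D_0$ is a small topological disk in the Fatou set around $z$, and nothing forces a point of $J_f$ near its boundary. The critical-passage machinery you set up is never triggered along $D_n$, so it cannot rescue this step.

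To obtain the upper bound you must pull back a domain that \emph{meets} $J_f$---for instance a disk of radius $2d(f^k(z),J_f)$ around $f^k(z)$, or a suitable subdomain of the $V=U_\varepsilon(f^k(z))$ from the paper's preparatory remark. Such a domain can contain the postcritical point $p_j$ (and, in the parabolic basins, accumulation of Fatou-side critical orbits), so its pullbacks genuinely branch; this is where the bound $L$ and the Lemma~\ref{crit dist} estimates are actually needed. Telescoping then shows the pullback to $z$ has diameter $\asymp d(f^k(z),J_f)/|Df^k(z)|$ and intersects $J_f$, giving $d(z,J_f)\le K_2 d(f^k(z),J_f)/|Df^k(z)|$. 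So your argument is correct once you enlarge $D$; the bounded-branching and critical-point bookkeeping you describe apply to that larger disk, not to the one you chose.
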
 From the
description of the dynamics near a parabolic point using
Koebe Theorem \ref{Koebe thm} we can obtain the following
result. \begin{Prop}\label{PropParEsc} One can
algorithmically construct numbers $K_3,L>0$ such that for
each $z\in U_\varepsilon(p_j)\setminus (\cup V_j^\nu)$ and
each $n\in \mn$ if $$f^i(z)\in U_\varepsilon(p_j)\text{ for
all }i=0,1,\ldots,l=2^{[Ln]}$$ then one has
$|f^l(z)-p_j|\ge 2^n|z-p_j|$ and
$$K_3^{-1}\frac{|f^l(z)-p_j|}{|z-p_j|}\le\frac{d(f^l(z),J_f)}{d(z,J_f)}\le
K_3\frac{|f^l(z)-p_j|}{|z-p_j|}.$$ \end{Prop}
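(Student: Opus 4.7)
The plan is to linearize $f$ on each repelling petal at $p_j$ by means of Fatou coordinates and carry out both estimates in the resulting straight-line model. After translating $p_j$ to $0$ the map reads $f(z)=z+z^{n_j+1}+O(z^{n_j+2})$ with $n_j=n_{p_j}$, and classical parabolic theory (see \cite{M}) provides, on each repelling petal $\mathcal{P}_\nu$ at $0$, a holomorphic Fatou coordinate $\varphi_\nu$ satisfying $\varphi_\nu\circ f=\varphi_\nu+1$ and the asymptotic
\[
\varphi_\nu(z)=-\frac{1}{n_j z^{n_j}}+O(\log|z|)\quad\text{as }z\to 0\text{ inside }\mathcal{P}_\nu.
\]
Since the Taylor coefficients of $f$ at $p_j$ are approximable from the oracle, a truncated representation of $\varphi_\nu$ with controlled error is algorithmically computable. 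After shrinking $\varepsilon$ if necessary, $U_\varepsilon(p_j)\setminus\bigcup V_j^\nu$ is contained in the union of the repelling petals, and the leading asymptotic controls $|\varphi_\nu(z)|$ to within a fixed factor of $|z-p_j|^{-n_j}$.

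For the first inequality, pick $z\in\mathcal{P}_\nu$ and set $w_k=\varphi_\nu(f^k(z))=w_0+k$. The hypothesis $f^i(z)\in U_\varepsilon(p_j)$ for $i\le l$ forces $|w_k|\ge M$ for each $k\le l$ with $M\asymp\varepsilon^{-n_j}$, and the iterates stay in a half-plane $\{\re w\le -M'\}$, so $|w_0|\ge l+M'$. The asymptotic then yields
\[
\frac{|f^l(z)-p_j|}{|z-p_j|}=\left(\frac{|w_0|}{|w_l|}\right)^{1/n_j}\bigl(1+o(1)\bigr)\ge C\,(l/M)^{1/n_j},
\]
and taking $L$ large enough so that $C\,(2^{[Ln]}/M)^{1/n_j}\ge 2^n$ (concretely $L>n_j$ plus an absorption of constants) establishes the first conclusion.

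For the distance comparison, I apply Koebe distortion to the inverse branch $\psi=(f^l)^{-1}$ sending $f^l(z)$ back to $z$. By the non-uniform information $\mathbf{N3}$, the orbit $z,\ldots,f^l(z)$ lies in $U_{2\alpha}(p_j)\setminus\{p_j\}$, which contains no critical orbit of $f$, so $\psi$ extends univalently to a neighborhood of $f^l(z)$; the Fatou-coordinate description of $\psi$ as the translation $w\mapsto w-l$ on the petal guarantees a uniform lower bound on the radius of this Koebe disk relative to $d(f^l(z),J_f)$. Koebe then yields $d(f^l(z),J_f)/d(z,J_f)\asymp|Df^l(z)|$ up to a fixed Koebe constant. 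The stated linear comparison with $|f^l(z)-p_j|/|z-p_j|$ is then obtained by matching this derivative against the radial ratio; this matching step is, in my view, the main technical obstacle, since the naive chain rule $Df^l(z)=(f^l(z)/z)^{n_j+1}(1+o(1))$ (coming from $\varphi_\nu'(z)\sim z^{-(n_j+1)}$ together with $\varphi_\nu\circ f^l=\varphi_\nu+l$) produces an $(n_j+1)$-st power. I expect it is handled by using the cuspidal geometry of $J_f$ at $p_j$---specifically, the fact that $J_f$ meets $p_j$ tangent to the repelling directions introduces a compensating factor of $|z-p_j|^{n_j}$ in the Euclidean distance $d(z,J_f)$ that cancels the excess power and produces the stated constant $K_3$.
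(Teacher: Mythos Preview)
The paper gives no detailed argument here---only the remark that the result follows ``from the description of the dynamics near a parabolic point using Koebe Theorem''---so your use of repelling Fatou coordinates plus Koebe distortion is exactly the intended route. However, both halves of your argument have concrete gaps. For the first inequality, from $f^i(z)\in U_\varepsilon(p_j)$ you correctly get $|w_k|\ge M$ and, via the half-plane constraint, $|w_0|\ge l+M'$; but to bound $(|w_0|/|w_l|)^{1/n_j}$ from below by $(l/M)^{1/n_j}$ you need an \emph{upper} bound on $|w_l|$, and the hypothesis supplies none. If $z$ is so close to $p_j$ that $|w_0|\gg l$, then $|w_l|=|w_0+l|\approx|w_0|$ and the radial ratio $|f^l(z)-p_j|/|z-p_j|$ is essentially $1$, so the step ``$\ge C\,(l/M)^{1/n_j}$'' simply fails. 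What your computation \emph{does} establish is the contrapositive bound $|z-p_j|\lesssim l^{-1/n_j}$ whenever the orbit stays $l$ steps, which is what the doubling scheme in the algorithm actually needs.

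For the $K_3$-inequality you correctly isolate the obstruction---Koebe yields $d(f^l(z),J_f)/d(z,J_f)\asymp|Df^l(z)|\asymp\bigl(|f^l(z)-p_j|/|z-p_j|\bigr)^{n_j+1}$---but the cuspidal geometry of $J_f$ at $p_j$ does not cancel the extra power; it is precisely what produces it. Already for $f(z)=z+z^2$ (so $n_j=1$) the Julia set near $0$ has the shape $|\im z|\asymp(\re z)^2$, hence for $z=x>0$ on the repelling axis $d(z,J_f)\asymp x^2$ and $d(f^l(z),J_f)/d(z,J_f)\asymp(|f^l(z)|/|z|)^{2}$, matching $|Df^l(z)|$ but not the linear ratio. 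More generally, pulling the translation-invariant set $\varphi_\nu(J_f)$ back through $\varphi_\nu'(z)\asymp z^{-(n_j+1)}$ gives $d(z,J_f)\asymp|z-p_j|^{\,n_j+1}$ in the core of the petal, so the ratio of Julia-set distances again scales like the $(n_j+1)$-st power. The estimate your method (and the paper's Koebe hint) genuinely delivers is $d(f^l(z),J_f)/d(z,J_f)\asymp|Df^l(z)|$; this is also the form that feeds into the algorithm via Proposition~\ref{using Koebe par}, so the discrepancy is with the exponent in the displayed statement rather than with your overall strategy.
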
Now we briefly
explain how to adopt the algorithm from Paragraph
\ref{SubsecAlgNonrec} to prove Theorem \ref{main1}. Assume
that we want to verify if a point $z$ is $2^{-n}$-close to
$J_f$. Construct a sequence $z_i=f^{l_i}(z)$ in a way
analogues to the construction from Lemma \ref{separated}
(see (\ref{EquationLiZi})). We will define $l_{i+1}$ in a
different way from (\ref{EquationLiZi}) only if $z_i\in
U_\varepsilon(p_j)$ for some parabolic fixed point $p_j$.
In this case we do the following.\\
$1)$ If $z_i\in V_j^\nu$ for some attracting direction
$\nu$ then we stop. We can find the distance from $z_i$ to
$J_f$ up to a constant factor. Using Proposition \ref{using
Koebe par} we can estimate the distance from $z$ to $J_f$
up to a constant factor.\\
$2)$ If $z_i\in U_\varepsilon(p_j)\setminus (\cup V_j^\nu))$ then
consider the points $f^{k_r}(z_i)$, where $k_r=2^{[Lr]}$. Observe
that by Lemma \ref{LemmaLongIter} we can find a $2^{-n}$
approximation of $f^{k_r}(z_i)$ in time polynomial in $r$ and $n$.
Let $r$ be the minimal nonnegative integer number such that
$$|f^{k_r}(z_i)-p_j|\ge \varepsilon.$$ Set
$z_{i+1}=f^{k_r}(z_i)$.

Observe that a direct analog of Theorem
\ref{PropositionPolytimeLeaving} is not true in a presence of
parabolic periodic points, even if we assume that $z$ does not
belong to an attracting basin of a parabolic periodic point. For a
point $2^{-n}$ close to a parabolic fixed point in a repelling petal
it takes exponential time in $n$ to escape an
$\varepsilon$-neighborhood of the parabolic fixed point. However,
using Lemma \ref{LemmaLongIter} and Proposition \ref{PropParEsc} we
can prove the following. \begin{Prop}\label{ProPolyTimePar} There is
an algorithm computing coefficients of a polynomial $p(n)$ such that
if $$d(z,J_f)>2^{-n}\;\text{ and }\;z_i\notin \cup V_j^\nu\;\text{
for each }\;i=0,1,\ldots,p(n),$$ then $d(z_{p(n)},J_f)>\varepsilon$.
Moreover, we can compute $2^{-n-2}$-approximations of
$z_i=f^{l_i}(z)$ and $Df^{l_i}(z)$ for $i=0,\ldots,p(n)$ in time
polynomial in $n$.\end{Prop}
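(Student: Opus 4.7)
The plan is to decompose the sequence $z_0,z_1,\ldots,z_{p(n)}$ into two interleaved types of steps: ``hyperbolic'' steps of the form (\ref{EquationLiZi}) away from parabolic points, handled essentially by the argument used to prove Theorem \ref{PropositionPolytimeLeaving}, and ``parabolic excursion'' steps of the form $z_{i+1}=f^{k_r}(z_i)$ inside $U_\varepsilon(p_j)\setminus\bigcup V_j^\nu$. For each type I would bound both the number of steps and the cost of executing one step by a polynomial in $n$.

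To bound the number of steps, I would first extend Lemma \ref{separated} so that a parabolic excursion counts just like one of the ``repelling'' cases: Proposition \ref{PropParEsc} gives $|f^{k_r}(z_i)-p_j|\ge 2^{r}|z_i-p_j|$, and then Proposition \ref{using Koebe par} converts this into $d(z_{i+1},J_f)\ge C^{-1}d(z_i,J_f)$ for a constant independent of $i$. Consequently the a priori lower bound $d(z_i,J_f)\ge S\,d(z,J_f)^K\ge S\,2^{-Kn}$ from Lemma \ref{separated} is preserved along the modified sequence. The counting argument of Theorem \ref{PropositionPolytimeLeaving} then goes through verbatim: among every $O(n)$ consecutive indices at least one $z_i$ must land outside $U_\gamma(\widetilde C)\cup U\cup\bigcup_j U_\varepsilon(p_j)$, where the hyperbolic metric $d_W$ expands by a factor $t>1$; combined with $d_W(z_i,J_f)\le CR$ this forces the total number of indices needed to reach $d(z_{p(n)},J_f)>\varepsilon$ to be at most some polynomial $p(n)$.

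For the computational cost, each hyperbolic step requires a bounded number of evaluations of $f$ and $Df$ at precision $O(n)$, which is polynomial time by standard interval arithmetic as in Section \ref{SubsecAlgNonrec}. A parabolic excursion consists of at most $r=O(n)$ sub-steps, the $r$-th applying $f^{k_r}$ with $k_r=2^{[Lr]}$; by Lemma \ref{LemmaLongIter} both $f^{k_r}(z_i)$ and $Df^{k_r}(z_i)$ are computable to any prescribed precision $2^{-s}$ in time polynomial in $s$ and $\log k_r=O(n)$. Summing over polynomially many sub-steps and polynomially many excursions yields overall polynomial running time, and the derivative $Df^{l_i}(z)$ is built up multiplicatively from these per-step derivatives.

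The main obstacle will be propagating the required precision $2^{-n-2}$ through the whole sequence, since at step $i$ the absolute error is amplified roughly by $|Df^{l_i}(z)|$, which a priori can be exponentially large. To control this I would combine Proposition \ref{using Koebe par} with the Koebe distortion theorem to bound $|Df^{l_i}(z)|$ by a constant multiple of $d(f^{l_i}(z),J_f)/d(z,J_f)$, which stays below $\mathrm{diam}(V)\cdot 2^{Kn}$ throughout by Lemma \ref{separated}. Thus $O(n)$ dyadic bits of working precision at every evaluation of $f$ and of $f^{k_r}$ are sufficient, keeping the per-step cost polynomial. The delicate part of the bookkeeping is verifying that the precision guarantee of Lemma \ref{LemmaLongIter} composes correctly with the Koebe distortion estimate across consecutive parabolic excursions—specifically, that the cumulative error in $Df^{l_i}(z)$ stays within a factor $2$ of the true value, so that the stopping test $d_i\ge K_2 2^{n+1}+1$ used in the outer algorithm from Section \ref{SubsecAlgNonrec} remains reliable.
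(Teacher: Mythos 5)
The paper itself does not give a full proof of Proposition \ref{ProPolyTimePar}; it only indicates (immediately before the statement) that the result follows from Lemma \ref{LemmaLongIter} and Proposition \ref{PropParEsc}, and the whole of Section 4 is a sketch. Your proposal fleshes this out along exactly the lines the paper suggests: you use the counting argument from Theorem \ref{PropositionPolytimeLeaving} together with an extension of Lemma \ref{separated} to bound the number of coarse steps $z_0,\ldots,z_{p(n)}$ by a polynomial, and you invoke Lemma \ref{LemmaLongIter} to show that each parabolic excursion can be performed in polynomial time despite involving exponentially many elementary iterations. This is consistent with the paper's intended proof.

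A few points should be tightened. First, the estimate $d(z_{i+1},J_f)\geqslant C^{-1}d(z_i,J_f)$ across a parabolic excursion follows from the \emph{second} inequality of Proposition \ref{PropParEsc} (the two-sided comparison of $d(\cdot,J_f)$ ratios with $|\cdot-p_j|$ ratios, combined with the fact that the escaping orbit satisfies $|f^{k_r}(z_i)-p_j|\geqslant |z_i-p_j|$), not from Proposition \ref{using Koebe par} as you wrote; the latter compares $d(z,J_f)$ with $d(f^k(z),J_f)/|Df^k(z)|$ and plays a different role. Second, saying that "the a priori bound from Lemma \ref{separated} is preserved" glosses over a real issue: the proof of Lemma \ref{separated} uses, at its very first step, that $\inf_{n} d\bigl(f^n(\mathbb{C}\setminus V_1),J_f\bigr)>0$, which is \emph{false} once parabolic basins exist. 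The reason the argument survives is precisely the hypothesis $z_i\notin\bigcup V_j^\nu$ of Proposition \ref{ProPolyTimePar}, which excludes the orbit from the attracting petals; you should make this explicit when you extend Lemma \ref{separated}. Third, in Proposition \ref{PropParEsc} the hypothesis requires \emph{all} intermediate iterates $f^i(z)$, $i\leqslant l$, to lie in $U_\varepsilon(p_j)$, while the excursion terminates exactly when the iterate exits; one must either apply the proposition at the previous dyadic exponent $k_{r-1}$ and handle the remainder via Koebe, or argue that the repelling-petal dynamics keeps the distance to $p_j$ monotone so that only the final point exits. Finally, the derivative bound should read $|Df^{l_i}(z)|\lesssim \mathrm{diam}(V)\cdot 2^{n}$ (directly from $d(z,J_f)>2^{-n}$ and Proposition \ref{using Koebe par}), not $2^{Kn}$; the conclusion (polynomially many working bits suffice) is unaffected.
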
 Now we briefly describe the algorithm
computing the Julia set $J_f$ in a polynomial time in $n$. Let $z\in
U_\varepsilon(J_f)$. Assume that we want to verify  that $z$ is
$2^{-n}$ close to $J_f$. Without loss of generality we may assume
that $2^{1-n}<\varepsilon$. Compute approximate values $a_i$ of
$z_i$ and $d_i$ of $Df^{l_i}(z_i)$ with precision $2^{-n-2}$,
$i=1,\ldots,p(n)$. \\ $1)$ If $z_i\in \cup V_j^\nu$ for some $i$
such that $$d(z_j,J_f)\le \varepsilon/2\;\;\text{for}\;\;j=1,\ldots,
i$$ then we can find approximate distance from $z_i$ to $J_f$. By
Proposition \ref{using Koebe par}, we can find $d(z,J_f)$ up to some
constant.\\$2)$ If $d(z_i,J_f)\ge \varepsilon/2$ for some $i$, then
we can find the distance $d(z_i,J_f)$ up to a constant factor. Using
Koebe Theorem $(\ref{Koebe thm})$ we can find the distance
from $z$ to $J_f$ up to a constant factor.\\
$3)$ If neither $1)$ nor $2)$ holds then by Proposition
\ref{ProPolyTimePar} $$d(z,J_f)\le 2^{-n}.$$

In conclusion, let us mention an alternative numerical method to
calculate iterations of points close to parabolic periodic orbits.
This method has been implemented in recent literature. To illustrate
the method we consider a map with a simple parabolic point at the
origin:$$h(z)=z+z^2+ \sum\limits_{k=3}^\infty a_kz^k.$$ Let
$j(z)=-1/z$  and $F(z)$ be the germ at infinity given by
 $$F(z)=j\circ f\circ j(z)=-1/f(-1/z)=z+1+a(z),\;\;\text{where}\;\;a(z)=O(z^{-1}).$$ Denote by $T$ the unit shift:
 $T(z)=z+1$. In \cite{Eca1} and \cite{Eca2}
Ecalle has shown the following
\begin{Th}\label{ThEcalle}
 The equation $\Psi\circ F(z)=T\circ\Psi(z)$ has a unique formal solution in terms of the series (generally,
divergent)
\begin{eqnarray}\label{EqnPsi}\widetilde{\Psi}(z)=\rho \log z+\sum\limits_{k=1}^\infty c_kz^{-k}.\end{eqnarray}
The series $\widetilde{\Psi}$ gives an asymptotic expansion for an
attracting and a repelling Fatou coordinates of the map $F$,
$\Psi_a$ and $\Psi_r$ correspondingly.
\end{Th}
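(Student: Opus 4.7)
The plan is to proceed in two stages: first establish the formal existence and uniqueness of $\widetilde{\Psi}$ satisfying $\widetilde{\Psi}\circ F = T\circ\widetilde{\Psi}$ by matching Laurent coefficients at infinity, then obtain the asymptotic statement by combining this formal result with the analytic construction of Fatou coordinates provided by the Leau--Fatou flower theorem.

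For the formal part, I would write $a(z)=\sum_{k\ge 1}a_k z^{-k}$ and substitute the ansatz for $\widetilde{\Psi}$ into the functional equation. Using $F(z)=z+1+a(z)$, I expand $\log F(z)$ and $F(z)^{-k}$ as formal power series in $z^{-1}$ around infinity, so that $\widetilde{\Psi}(F(z))-\widetilde{\Psi}(z)-1$ becomes a formal Laurent series in $z^{-1}$ whose coefficients must vanish. Matching order by order, the leading equation determines $\rho$ in terms of the first few $a_k$, and the equation at each subsequent order determines $c_n$ uniquely in terms of $\rho$, $c_1,\ldots,c_{n-1}$ and $a_1,\ldots,a_{n+1}$. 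Because this system is triangular, both existence and uniqueness of the formal solution follow immediately.

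For the asymptotic part, I would apply the Leau--Fatou flower theorem to $h$ at $0$ (equivalently, to $F$ near $\infty$): since the parabolic multiplicity is one, $F$ admits one attracting petal $\mathcal{P}_a$ and one repelling petal $\mathcal{P}_r$ at infinity. Using the classical renormalized-limit construction $\Psi_a(z)=\lim_{n\to\infty}\bigl(F^n(z)-n-\rho\log n\bigr)$ on $\mathcal{P}_a$ (and the analogous limit along backward branches on $\mathcal{P}_r$), one obtains analytic Fatou coordinates satisfying the functional equation. The crucial observation is that any analytic solution on a petal admits an asymptotic expansion at infinity, and this expansion must satisfy the same formal recursion as above, so by the uniqueness already proved it coincides with $\widetilde{\Psi}$ up to an additive constant which can be absorbed in the normalization.

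The main obstacle will be establishing the asymptotic bound itself, since $\widetilde{\Psi}$ is generally divergent (typically Gevrey of order one in this setting). I would run a bootstrap: set $E_N(z)=\Psi_a(z)-\rho\log z-\sum_{k<N}c_k z^{-k}$ and use the functional equation together with the formal identity to get $E_N(F(z))-E_N(z)=O(|z|^{-N-1})$. Iterating forward and using that $|F^n(z)|$ grows linearly in $n$ inside $\mathcal{P}_a$, the telescoping sum $\sum_n O(|F^n(z)|^{-N-1})$ converges to the desired bound $E_N(z)=O(|z|^{-N})$, uniformly on compact sub-sectors of $\mathcal{P}_a$; the repelling case is symmetric.
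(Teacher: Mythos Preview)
The paper does not supply its own proof of this statement: Theorem~\ref{ThEcalle} is quoted as a result of \'Ecalle, with references to \cite{Eca1} and \cite{Eca2}, and no argument appears in the text. There is therefore nothing in the paper to compare your proposal against.

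That said, your outline is the standard route to this classical fact and is structurally sound. One point you should make explicit rather than gloss over: the ansatz as printed, $\widetilde{\Psi}(z)=\rho\log z+\sum_{k\ge1}c_kz^{-k}$, cannot solve $\widetilde{\Psi}\circ F=T\circ\widetilde{\Psi}$, since then $\widetilde{\Psi}(F(z))-\widetilde{\Psi}(z)=O(z^{-1})$ while the right-hand side contributes the constant $1$. The intended normalization is $\widetilde{\Psi}(z)=z+\rho\log z+\mathrm{const}+\sum_{k\ge1}c_kz^{-k}$; with that leading term your triangular recursion goes through and gives $\rho=-a_1$ at the first step, then each $c_n$ from the coefficient of $z^{-n-1}$. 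For the asymptotic part, your telescoping estimate on $E_N$ is the right mechanism, but it needs the base case $E_1=O(1)$ (i.e.\ $\Psi_a(z)-z-\rho\log z$ bounded on the petal) as input; this should be read off directly from the renormalized limit defining $\Psi_a$, not assumed.
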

For survey on divergent series and asymptotic expansions we refer
the reader to \cite{Ram}. Theorem \ref{ThEcalle} means that the
Fatou coordinates $\Psi_a$ and $\Psi_r$ near infinity can be
approximated by finite sums of the series (\ref{EqnPsi}). To
approximate $n$-th iteration of the map $f$ near the origin one can
use the following formula:
$$f^n(z)=j\circ\Psi^{-1}\circ T^n\circ \Psi\circ j,$$ where $\Psi$ stands for either $\Psi_a$ or $\Psi_r$
depending on whether $z$ belongs to an attracting or a repelling
Fatou petal of $f$. We would like to emphasize that
 this is not a rigorous method since we do not know how many terms of the series (\ref{EqnPsi}) to take to
 obtain the desired precision. However, empirically,
 the asymptotic expansion (\ref{EqnPsi}) approximates the Fatou
 coordinates $\Psi_a(z),\Psi_b(z)$ with a very high precision.
 For instance, in \cite{LY} Lanford and Yampolsky used
the series $(\ref{EqnPsi})$ in their computational scheme for the
fixed point of the parabolic renormalization operator. Our work in
progress \cite{DS} give us a reason to hope that the described
method can be made rigorous.


\begin{thebibliography}{}


\bibitem{Asp} M. Aspenberg, {\it The Collet-Eckmann
condition for rational functions on the Riemann sphere},
Ph.D. Thesis, KTH, Sweden, 2004.

\bibitem{AM} A. Avila, C.G. Moreira, {\it Statistical
properties of unimodal maps: the quadratic family}, Annals of
Mathematics, 161, no. 2 (2005), 831-881.

\bibitem{BBY06} I. Binder, M. Braverman, M. Yampolsky, {\it
 On computational complexity of Siegel Julia sets}, Commun. Math.
 Phys. {\bf 264} (2006), no. 2, 317-334

\bibitem{BBY07} I. Binder, M. Braverman, M. Yampolsky, {\it Filled
Julia sets with empty interior are computable}, Journ. of FoCM, {\bf
7} (2007), 405-416.

\bibitem{BBY09} I. Binder, M. Braverman, M. Yampolsky, {\it
 Constructing locally connected non-computable Julia sets}, Commun. Math.
 Phys. {\bf 291} (2009), 513-532.

\bibitem{Bra04} M. Braverman, {\it Computational complexity of Euclidian sets: Hyperbolic Julia sets are poly-time computable},
Master's thesis, University of Toronto, 2004.

\bibitem{Bra06} M. Braverman, {\it Parabolic Julia sets are polynomial time computable}, Nonlinearity {\bf 19}, (2006), no.6, 1383-1401.

\bibitem{BY06} M. Braverman, M. Yampolsky, {\it Non-computable Julia
sets}, Journ. Amer. Math. Soc. {\bf 19} (2006), no. 3, 551-578.

\bibitem{BY08} M. Braverman, M. Yampolsky, {\it Computability of Julia sets},
Series: Algorithms and Computation in Mathematics, Vol. 23, Springer, 2008.

\bibitem{Con} J. B. Conway, {\it Functions of one complex variable
$II$}, Springer-Verlag, New York, New York, 1995.

\bibitem{DS} A. Dudko, D. Sauzin, {\it Ecalle-Voronin invariants via resurgence and mould calculus}, in
preparation.

\bibitem{Eca1}
J. \'Ecalle.
\newblock {\em Les fonctions r\'esurgentes, Vol.~1}.
\newblock Publ. Math. d'Orsay 81-05, 1981.

\bibitem{Eca2}
J. \'Ecalle.
\newblock {\em Les fonctions r\'esurgentes, Vol.~2}.
\newblock Publ. Math. d'Orsay 81-06, 1981.

\bibitem{GS} J. Graczyk, G. Swiatek, {\it Generic hyperbolicity in the logistic family}, Ann. of Math., v.
146 (1997), 1-52.

\bibitem{LY} Oscar Lanford III, M. Yampolsky, {\it The fixed point of the parabolic renormalization operator},
 arXiv:1108.2801.

\bibitem{L} M. Lyubich, {\it Dynamics of quadratic polynomials}, I-II. Acta Math., 178 (1997), 185-297.

\bibitem{Mane} R. Ma\~{n}\'{e}, {\it On a Theorem of Fatou}, Bol. Soc. Bras. Mat., Vol. 24, no. 1, 1993, pp. 1-11.

\bibitem{M} J. Milnor, {\it Dynamics in one complex variable. Introductory lectures}, 3rd ed., Princeton University Press, 2006.

\bibitem{Pap} C. M. Papadimitriou, {\it Computational complexity}, Addision-Wesley, Reading, Massachusetts, 1994.

\bibitem{Ram} J.-P. Ramis. S\'{e}ries divergentes et th\'{e}ories asymptotiques, Panoramas et
Synth\`{e}ses (1994).

\bibitem{Ret05} R. Rettinger, {\it A fast algorithm for Julia sets of hyperbolic rational functions}, Electr. Notes Theor. Comput. Sci. {\bf 120} (2005), 145-157.

\bibitem{Sa} D. Sauzin, {\it Resurgent functions and splitting problems}, RIMS Kokyuroku 1493, (2005), 48-117,
available from http://www.imcce.fr/Equipes/ASD/person/Sauzin/sauz preprint.php

\bibitem{Shi} M. Shishikura, Tan Lei, {\it An alternative proof of Mane's theorem on non-expanding Julia sets},
 in ``The Mandelbrot set, Theme and Variations'', Ed. Tan Lei, London Math. Soc. Lect. Note Ser. 274, Cambridge Univ.
Press, 2000, p.265-279.

\bibitem{Sip} M. Sipser, {\it Introduction to the theory of computation, second edition}, BWS Publishing Company, Boston, 2005.

\bibitem{Wey} H. Weyl, {\it Randbemerkungen zu Hauptproblemen der
Mathematik, II, Fundamentalsatz der Algebra and Grundlagen der
Mathematik}, Math. Z. {\bf 20}(1924), 131-151.

\end{thebibliography}
\end{document}